\newcounter{alphthm}
\DeclareMathAlphabet{\mathpzc}{OT1}{pzc}{m}{it}
\newtheorem{theorem}{Theorem}[section]
\newtheorem{lemma}[theorem]{Lemma}
\newtheorem{proposition}[theorem]{Proposition}
\newtheorem{cor}[theorem]{Corollary}
\theoremstyle{definition}
\newtheorem{defn}[theorem]{Definition}%
\newtheorem{rem}[theorem]{Remark}
\newtheorem{remark}[theorem]{Remark}
\newtheorem{definition}[theorem]{Definition}
\newtheorem{example}[theorem]{Example}
\begin{document}

\title{Weyl's Theory in the Generalized Lie Algebroids Framework}
\author{C. M. Arcu\c{s}, E. Peyghan and E. Sharahi}
\maketitle

\begin{abstract}
The geometry of the Lie algebroid generalized tangent bundle of a
generalized Lie algebroid is developed. Formulas of Ricci type and
identities of Cartan and Bianchi type are presented. Introducing the
notion of geodesic of a mechanical $\left( \rho ,\eta \right)
$-system with respect to a $(\rho, \eta)$-spray, the Berwald $(\rho,
\eta)$-derivative operator and its mixed curvature, we obtain main
results to conceptualize the Weyl's method in this general
framework. Finally, we obtain two new results of Weyl type for the
geometry of mechanical $\left( \rho ,\eta \right) $-systems.
\end{abstract}
{\bf{Keywords:}} distinguished connection, dynamical system, generalized Lie algebroid, geodesic,  projectively related sprays. \footnote{ 2010 Mathematics subject Classification: 53C05, 17B66, 70S05 and 53C60, 53C40.}
%\author{C. M. Arcu\c{s}, E. Peyghan, E. Sharahi}

%%%%%%%%%%%%%%%%%%%%%%%%%%%%%%%%%%%%%%%%%%%%%%%%%%%%%%%%%%%%%%%%%%%%%%%%%%%%%%%%%%%%%%%%%%%%%%%%%%%%
%\tableofcontents
\section{Introduction}
The classical theory of physical geometry, developed by Helmholtz, Poincar%
\'{e} and Hilbert, regarded the concept of the \textquotedblleft
metric congruence\textquotedblright\ as the only basic relation in
geometry, and constructed physical geometry from this one notion
alone in terms of the relative positions and displacements of
physical congruence standards. Although Einstein's general theory of
relativity championed a dynamical view of spacetime geometry that is
very different from the classical theory of physical geometry,
Einstein initially approached the problem to find a structure of
spacetime from the metrical point of view. The decisive step in this
development, however, was T. Levi-Civita's discovery in 1917 of the
concept of infinitesimal parallel vector displacement, and the fact
that such parallel vector displacement is uniquely determined by the
metric field of Riemannian geometry. In 1918, Weyl presented his
ingenious attempt to unify gravitation and electromagnetism by
constructing a gauge-invariant geometry which is a purely
infinitesimal metric geometry \cite{31}. Weyl wanted a metric
geometry which would not permit distance comparison of lenght
between two vectors located at finitely different points. Weyl's
metric-independent construction of the affine structure led to the
development the differential projective geometry. The interest in
projective geometry is in the paths of curves.

It is important to understand that what one means by a
\textquotedblleft curve\textquotedblright\ is the map (the
parametric description) itself, and not the set of its image
points(namely, the path). Consequently, two curves are
mathematically considered to be different curves if they are given
by different maps (different parameter descriptions), even though
their image sets (namely, their paths) are the same \cite{9}.

Weyl recognized the importance of a torsion-free covariant
derivative operator independently of a metric (see \cite{30}) and in
1921 he proposed to extend the geometric structure (more precisely
the affine connection) determined by the system of freely falling
particles \cite{32}.

In a modern formulation, the Weyl's theorem turns up as follows:
\begin{flushleft}
\textit{Two torsion-free covariant derivative
operators $D_{M}$ and $\bar{D}_{M}$ have the same geodesics
up to a parameter transformation if and only if there exists a $1$-form $%
\alpha $ such that
\[
\bar{D}_{M}=D_{M}+\alpha \otimes 1+1\otimes \alpha ,
\]%
where $1$ is the identity tensor of $\left( 1,1\right)$-type.}

\textit{The covariant derivative operators $\bar{D}_{M}$ and $D_{M}$ are called
projectively equivalent.}
\end{flushleft}
A classical proof of Weyl's theorem is presented in \cite{12} and some
 coordinate-free proofs are presented in e.g. \cite{16,18,22}. As
there exists a canonical bijective corespondence between the set of
torsion-free covariant derivative of tensorial algebra of the usual Lie
algebroid $\left( \left( TM,\tau _{M},M\right) ,\left[ ,\right] _{TM},\left(
Id_{TM},Id_{M}\right) \right) $ and the set of sprays of the usual Lie
algebroid $\left( \left( TTM,\tau _{TM},TM\right) ,\left[ ,\right]
_{TTM},\left( Id_{TTM},Id_{TM}\right) \right) ,$ a first modern formulation
of Weyl's theorem in this new direction by research was presented by
Ambrose, Palais and Singer \cite{1}. See also \cite{8}.

Using the pull-back formalism given by the short exact sequence
\[
\begin{array}{ccccccccc}
0 & \hookrightarrow  & \tau _{M}^{\ast }\left( TM\right)  & ^{%
\underrightarrow{~\ i~\ }} & TTM & ^{\underrightarrow{~\ j~\ }} & \tau
_{M}^{\ast }\left( TM\right)  & ^{\underrightarrow{}} & 0 \\
\downarrow  &  & \downarrow  &  & \downarrow  &  & \downarrow  &  &
\downarrow  \\
TM & ^{\underrightarrow{Id_{TM}~}} & TM & ^{\underrightarrow{~Id_{TM}~}} &
TM & ^{\underrightarrow{~Id_{TM}~}} & TM & ^{\underrightarrow{~Id_{TM}~}} &
TM%
\end{array}%
\]
which is more known in Finsler geometry (see e.g. \cite{6,23}), in
the geometry of second-order ordinary differential equations or in
the inverse problem of calculus of variations \cite{7,14},  Szilasi
 examines in  \cite{24} how some basic geometric data will transform
under a projective change of the spray.
 In addition, using the Yano derivative operator (see \cite{33}),
Szilasi extend the Weyl's theorem to the general class of non-affine sprays.

Using the prolongation Lie algebroid
\[
\left( \left( \mathcal{L}^{\pi }E,\pi _{\mathcal{L}},E\right) ,\left[ ,%
\right] _{\mathcal{L}},\left( \rho _{\mathcal{L}},Id_{E}\right) \right),
\]%
of a Lie algebroid
\[
\left( \left( E,\pi ,M\right) ,\left[ ,\right] _{E},\left( \rho
,Id_{M}\right) \right),
\]%
the pull-back formalism given by the short exact sequence
\[
\begin{array}{ccccccccc}
0 & \hookrightarrow  & \pi ^{\ast }\left( E\right)  & ^{\underrightarrow{~\
i~\ }} & \mathcal{L}^{\pi }E & ^{\underrightarrow{~\ j~\ }} & \pi ^{\ast
}\left( E\right)  & ^{\underrightarrow{}} & 0 \\
\downarrow  &  & \downarrow  &  & \downarrow  &  & \downarrow  &  &
\downarrow  \\
E & ^{\underrightarrow{~Id_{E}~}} & E & ^{\underrightarrow{~Id_{E}~}} & E &
^{\underrightarrow{~Id_{E}~}} & E & ^{\underrightarrow{~Id_{E}~}} & E%
\end{array}%
\]%
is studied by many e.g. \cite{10,11,13,14,15} and similar results
with the classical ones were appeared (see also
\cite{19,20,21,25,26,27,28,29}).

Recently, using the previous short exact sequence, the second author rebuild the Finsler geometry concepts on the Lie algebroid structures and obtained important
 results about Liouville section and (semi)sprays \cite{17}.

The first author in \cite{2} introduced the Lie algebroid generalized tangent
bundle%
\[
\left( (\left( \rho ,\eta \right) TE,\left( \rho ,\eta \right) \tau _{E},E),
\left[ ,\right] _{\left( \rho ,\eta \right) TE},\left( \tilde{\rho}%
,Id_{E}\right) \right) ,
\]%
of a generalized Lie algebroid
\[
\left( \left( E,\pi ,M\right) ,\left[ ,\right] _{E,h},\left( \rho ,\eta
\right) \right),
\]%
and developed the geometry of mechanical $\left( \rho ,\eta \right) $%
-systems in \cite{4} using the short exact sequence%
\[
\begin{array}{ccccccccc}
0 & \hookrightarrow  & V\left( \rho ,\eta \right) TE & \hookrightarrow  &
\left( \rho ,\eta \right) TE & ^{\underrightarrow{~\ \left( \rho ,\eta
\right) \pi !~\ }} & \left( h\circ \pi \right) ^{\ast }E & ^{%
\underrightarrow{}} & 0 \\
\downarrow  &  & \downarrow  &  & \downarrow  &  & \downarrow  &  &
\downarrow  \\
E & ^{\underrightarrow{~Id_{E}~}} & E & ^{\underrightarrow{~Id_{E}~}} & E &
^{\underrightarrow{~Id_{E}~}} & E & ^{\underrightarrow{~Id_{E}~}} & E%
\end{array}%
\]

During study of sprays and geodesics in the general mechanical $(\rho ,\eta)$-systems, a natural question will be arised.
\begin{quote}
{\small Can we obtain theorems of Weyl type in the general framework
of mechanical }$(\rho ,\eta)${\small -systems?}
\end{quote}
We answer to the question in the last section. To this end, we must introduce principle concepts like the geodesic as the heart of the problem,
 the Berwald derivative operator and its mixed curvature. These are the main tools to conceptualize the Weyl method.

This paper is arranged as follows. In section 2, presenting a
(generalized) Lie algebroid, the Lie algebroid generalized tangent
bundle is introduced. In subsection 2.1 connections on a total space
of a generalized Lie algebroid are studied.
 For more details, one can see also \cite{3}. In subsection 2.2, we introduce a lift of a curve on the base of a generalized Lie algebroid that
  leads us to derive the differential equation for geodesics.
 In subsection 2.3 some classical endomorphisms for the module of sections of the Lie algebroid generalized tangent bundle are made.
  Section 6 is devoted to developing the theory of general distinguished linear connections for the Lie algebroid generalized tangent bundle.
   The torsion and the curvature of a distinguished linear $\left( \rho ,\eta\right) $-connection is developed and studied next.
    Formulas of Ricci type and identities of Cartan and Bianchi type in the general framework of the Lie algebroid generalized tangent bundle of a
     generalized Lie algebroid end this section. A presentation of the $\left( \rho ,\eta \right) $-semisprays and $%
\left( \rho ,\eta \right) $-sprays is in section 4. Other aspects
and more details can be found in \cite{4}. Also, there is a theorem
in this section
 that classifies the integral curves inducing by a $(\rho,\eta)$-spray. The section 5 is reserved to two theorems of the Weyl type.
  In this section, beginning with a Berwald covariant derivative, we study the horizontal projector associated to a $( \rho ,\eta ) $-spray.
   Moreover, we develop the sense of projectively related of $( \rho ,\eta ) $-sprays using the Liouville section. Then, defining the concept of geodesic of
    a mechanical $(\rho, \eta)$-system with respect to a $(\rho, \eta)$-spray, we answer to the above mentioned question in terms of two final theorems.
%*************************************************************************
\section{The Lie algebroid generalized tangent bundle of a generalized Lie algebroid}
%*************************************************************************
Let $(F,\nu ,N)$ be a vector bundle, $\Gamma ( F, \nu, N)$ be the set of the sections of it and $\mathcal{F}(
N)$ be the set of smooth real-valued functions on $N$. Then $(\Gamma
(F,\nu ,N) ,+,\cdot)$ is a $\mathcal{F}(N)$-module. If $(\varphi ,\varphi _{0})$ is a morphism from $( F,\nu ,N)$ to $(F', \nu', N')$ such that $\varphi _{0}$ is a isomorphism from $N$ to $N'$, then using the operation
\begin{equation*}
\begin{array}{ccc}
\mathcal{F}\left( N\right) \times \Gamma \left(F', \nu', N'\right) & ^{\underrightarrow{~\ \ \cdot ~\ \ }} & \Gamma \left(
F', \nu', N'\right),\\
\left( f,u^{\prime }\right) & \longmapsto & f\circ \varphi _{0}^{-1}\cdot
u^{\prime },
\end{array}%
\end{equation*}%
it results that $(\Gamma( F', \nu', N') ,+,\cdot)$ is a $\mathcal{F}(N)$-module and the modules morphism
\begin{equation*}
\begin{array}{ccc}
\Gamma \left(F,\nu ,N\right) & ^{\underrightarrow{~\ \ \Gamma \left(
\varphi ,\varphi _{0}\right) ~\ \ }} & \Gamma \left( F', \nu', N'\right),\\
u & \longmapsto & \Gamma \left( \varphi ,\varphi _{0}\right) u,
\end{array}%
\end{equation*}%
defined by
\begin{equation*}
\begin{array}{c}
\Gamma ( \varphi ,\varphi _{0}) u(y) =\varphi ( u_{\varphi
_{0}^{-1}(y)}) =( \varphi \circ u\circ
\varphi _{0}^{-1})(y),%
\end{array}%
\end{equation*}%
for any $y\in N'$ will be obtained.
\begin{definition}\label{AP}
A generalized Lie algebroid is a vector bundle $(F,\nu ,N)$ given by the diagram
\begin{equation}
\begin{array}{c}
\begin{array}[b]{ccccc}
\left( F,\left[ ,\right] _{F,h}\right) & ^{\underrightarrow{~\ \ \ \rho \ \
\ \ }} & \left( TM,\left[ ,\right] _{TM}\right) & ^{\underrightarrow{~\ \ \
Th\ \ \ \ }} & \left( TN,\left[ ,\right] _{TN}\right)\\
~\downarrow \nu &  & ~\ \ \downarrow \tau _{M} &  & ~\ \ \downarrow \tau _{N}
\\
N & ^{\underrightarrow{~\ \ \ \eta ~\ \ }} & M & ^{\underrightarrow{~\ \ \
h~\ \ }} & N
\end{array}
\end{array}
\end{equation}
where $h$ and $\eta $ are arbitrary isomorphisms, $(\rho, \eta): (F,\nu, N)\longrightarrow (TM,\tau _{M},M)$ is a vector bundles morphism and
\begin{equation*}
\begin{array}{ccc}
\Gamma \left( F,\nu ,N\right) \times \Gamma \left( F,\nu ,N\right) & ^{%
\underrightarrow{~\ \ \left[ ,\right] _{F,h}~\ \ }} & \Gamma \left( F,\nu
,N\right),\\
\left( u,v\right) & \longmapsto & \ \left[ u,v\right] _{F,h},
\end{array}
\end{equation*}
is an operation satisfies in
\begin{equation*}
\begin{array}{c}
\left[ u,f\cdot v\right] _{F,h}=f\left[ u,v\right] _{F,h}+\Gamma \left(
Th\circ \rho ,h\circ \eta \right) \left( u\right) f\cdot v,\ \ \ \forall f\in \mathcal{F}(N),
\end{array}
\end{equation*}
such that the 4-tuple $(\Gamma( F, \nu, N) ,+,\cdot, [ , ] _{F,h})$ is a Lie $\mathcal{F}(N)$-algebra.
\end{definition}
We denote by $\Big((F, \nu, N), [ , ] _{F,h}, (\rho, \eta) \Big)$ the generalized Lie algebroid defined in the above. Moreover, the couple $\Big([ , ]
_{F,h}, (\rho, \eta)\Big)$ is called the \emph{generalized
Lie algebroid structure}. It is easy to see that in the above definition, the modules morphism $\Gamma(Th\circ\rho, h\circ \eta) : (\Gamma(F,\nu ,N), +, \cdot, [ , ] _{F,h})\longrightarrow(\Gamma(TN, \tau_{N}, N), +, \cdot, [ , ]_{TN})$ is a Lie algebras morphism.
\begin{definition}
A morphism from
$
(( F,\nu ,N), [ , ] _{F,h}, (\rho, \eta))
$
to\\
$
(( F^{\prime }, \nu ^{\prime }, N^{\prime }), [ , ]
_{F^{\prime }, h^{\prime }}, (\rho ^{\prime }, \eta ^{\prime }))
$
is a morphism $(\varphi ,\varphi _{0})$ from $(F,\nu, N)$ to $(F',\nu', N')$ such that $\varphi _{0}$ is an isomorphism from $N$ to $N'$, and the modules morphism $\Gamma(\varphi, \varphi _{0})$
is a Lie algebras morphism from
$
( \Gamma \left( F,\nu ,N\right) ,+,\cdot ,\left[ ,\right] _{F,h})
$
to\\
$
( \Gamma \left( F^{\prime },\nu ^{\prime },N^{\prime }\right) ,+,\cdot
, \left[ ,\right] _{F^{\prime },h^{\prime }}).
$
\end{definition}
\begin{remark}
In particular case $\left( \eta ,h\right) =\left(
Id_{M},Id_{M}\right)$, the definition of the Lie algebroid will be obtained.
\end{remark}
In the next we consider that $(\left( E,\pi ,M\right) ,\left[
,\right] _{E,h},\left( \rho ,\eta \right))$ is a generalized lie
algebroid, where $\eta$ and $h$ are diffeomorphisms on $M$.
 Setting $\left( x^{i},y^{a}\right) $ as the canonical local coordinates on $%
\left( E,\pi ,M\right) ,$ where $i\in 1,\cdots ,m$, $a\in 1,\cdots
,r$, and
\begin{equation*}
\left( x^{i},y^{a}\right) \longrightarrow \left( x^{i%
%TCIMACRO{\U{b4}}%
%BeginExpansion
{\acute{}}%
%EndExpansion
}\left( x^{i}\right) ,y^{a%
%TCIMACRO{\U{b4}}%
%BeginExpansion
{\acute{}}%
%EndExpansion
}\left( x^{i},y^{a}\right) \right) ,
\end{equation*}%
is a change of coordinates on $\left( E,\pi ,M\right) $, then the
coordinates $y^{a}$ change to $y^{a%
%TCIMACRO{\U{b4}}%
%BeginExpansion
{\acute{}}%
%EndExpansion
}$ according to the rule
\begin{equation}
\begin{array}{c}
y^{a%
%TCIMACRO{\U{b4}}%
%BeginExpansion
{\acute{}}%
%EndExpansion
}=\displaystyle M_{a}^{a^{\prime }}y^{a}.%
\end{array}%
\end{equation}%
We consider the pull-back bundle $\left( \left( h\circ \pi \right)
^{\ast }E,\left( h\circ \pi \right) ^{\ast }\pi ,E\right) $ of this
generalized Lie algebroid. If we define
\begin{equation}
\begin{array}{rcl}
\ \left( h\circ \pi \right) ^{\ast }E & ^{\underrightarrow{\overset{\left(
h\circ \pi \right) ^{\ast }E}{\rho }}} & TE \\
\displaystyle X^{a}S_{a}(u_{x}) & \longmapsto & \displaystyle
X^{a}\left( \rho _{a}^{i}\circ h\circ \pi \right) \frac{\partial
}{\partial x^{i}}\left( u_{x}\right),
\end{array}
\end{equation}%
then $(({\overset{\left( h\circ \pi \right) ^{\ast }E}{\rho }},Id_{E})) : \left( \left( h\circ \pi \right) ^{\ast }E,\left( h\circ \pi \right) ^{\ast
}\pi ,E\right) \longrightarrow\left( TE,\tau _{E},E\right)$ is a vector bundles morphism. Moreover, the operation
\begin{equation*}
\begin{array}{ccc}
\Gamma \left( \left( h\circ \pi \right) ^{\ast }E,\left( h\circ \pi \right)
^{\ast }\pi ,E\right) ^{2} & ^{\underrightarrow{~\ \ \left[ ,\right]
_{\left( h\circ \pi \right) ^{\ast }E}~\ \ }} & \Gamma \left( \left( h\circ
\pi \right) ^{\ast }E,\left( h\circ \pi \right) ^{\ast }\pi ,E\right) ,%
\end{array}%
\end{equation*}%
defined by
\begin{equation}
\begin{array}{ll}
\left[ S_{a},S_{b}\right] _{\left( h\circ \pi \right) ^{\ast }E} &
=(L_{ab}^{c}\circ h\circ \pi )S_{c},\vspace*{1mm} \\
\left[ S_{a},fS_{b}\right] _{\left( h\circ \pi \right) ^{\ast }E} & %
\displaystyle=f\left( L_{ab}^{c}\circ h\circ \pi \right) S_{c}+\left( \rho
_{a}^{i}\circ h\circ \pi \right) \frac{\partial f}{\partial x^{i}}S_{b},%
\vspace*{1mm} \\
\left[ fS_{a},S_{b}\right] _{\left( h\circ \pi \right) ^{\ast }E} & =-\left[
S_{b},fS_{a}\right] _{\left( h\circ \pi \right) ^{\ast }E},%
\end{array}%
\end{equation}%
for any $f\in \mathcal{F}\left( E\right)$, is a Lie bracket on $\Gamma \left( \left( h\circ \pi \right) ^{\ast }E,\left( h\circ \pi \right)
^{\ast }\pi ,E\right)$. It is easy to check that
\begin{equation*}
\Big( (\left( h\circ \pi \right) ^{\ast }E,\left( h\circ \pi \right) ^{\ast
}\pi ,E) ,\left[ ,\right] _{\left( h\circ \pi \right) ^{\ast }E},(\overset{%
\left( h\circ \pi \right) ^{\ast }E}{\rho },Id_{E})\Big),
\end{equation*}%
is a Lie algebroid which is called the \textit{pull-back Lie algebroid of
the generalized Lie algebroid} $( \left( E,\pi ,M\right) ,\left[ ,\right]
_{E,h},\left( \rho ,\eta \right))$.
\begin{remark}
If $u=u^{a}s_{a}$ is a section of $( E,\pi ,M)$, then its
corresponding section is
\begin{equation*}
X=X^{a}S_{a}\in\Gamma(( h\circ \pi ) ^{\ast }E, (h\circ \pi ) ^{\ast
}\pi, E),
\end{equation*}
given by $X\left( u_{x}\right)=u\left( h\left( x\right) \right)$, for any $%
u_{x}\in \pi ^{-1}(V{\cap h}^{-1}W)$, where $(V, t_V)$ and $(W,
t_W)$ are two vector local $(m+r)$-charts such that $V{\cap
h}^{-1}W\neq\emptyset$.
\end{remark}
Let $( \partial _{i},\dot{\partial}_{a}) $ be the base sections for the Lie $%
\mathcal{F}( E) $-algebra
\[
(\Gamma ( TE,\tau _{E},E) ,+, \cdot ,[ ,]
_{TE}).
\]
Then
\begin{align*}
X^{a}\tilde{\partial}_{a}+\tilde{X}^{a}\dot{\tilde{\partial}}_{a}&
:=X^{a}(S_{a}\oplus (\rho _{a}^{i}\circ h\circ \pi )\partial
_{i})+\tilde{X}^{a}(0_{\pi ^{\ast }E}\oplus \dot{\partial}_{a}) \\
& =X^{a}S_{a}\oplus (X^{a}(\rho _{a}^{i}\circ h\circ \pi )\partial _{i}+\tilde{X}^{a}%
\dot{\partial}_{a}),
\end{align*}
making from $X^{a}S_{a}\in \Gamma \left( \left( h\circ \pi \right)
^{\ast
}E,\left( h\circ \pi \right) ^{\ast }\pi ,E\right) $ and $\tilde{X}^{a}\dot{\partial}%
_{a}\in \Gamma \left( VTE,\tau _{E},E\right)$ is a section of
$(\left( h\circ \pi \right) ^{\ast }E\oplus TE,\overset{\oplus }{\pi
},E)$. Moreover,
it is easy to see that the sections $\tilde{\partial}_{1},\cdots,\tilde{%
\partial}_{r},\dot{\tilde{\partial}}_{1},\cdots,\dot{\tilde{\partial}}_{r}$
are linearly independent. Now, we consider the vector subbundle $(\left(
\rho ,\eta \right) TE,\left( \rho ,\eta \right) \tau _{E},E)$ of the vector
bundle $(\left( h\circ \pi \right) ^{\ast }E\oplus TE,\overset{\oplus }{\pi }%
,E),$ for which the $\mathcal{F}(E)$-module of sections is the $\mathcal{F}%
(E)$-submodule of $(\Gamma (\left( h\circ \pi \right) ^{\ast }E\oplus TE,%
\overset{\oplus }{\pi },E),+,\cdot ),$ generated by the set of sections $(%
\tilde{\partial}_{a},\dot{\tilde{\partial}}_{a}).$ The base sections $(%
\tilde{\partial}_{a},\dot{\tilde{\partial}}_{a})$ are called the \emph{%
natural }$(\rho ,\eta )$\emph{-base}.

Now consider  the vector bundles morphism $\left( \tilde{\rho},Id_{E}\right):( (\rho, \eta) TE, (\rho, \eta) \tau _{E} ,E)$ $ \longrightarrow( TE,\tau _{E},E) $
, where
\begin{equation}
\begin{array}{rcl}
\left( \rho ,\eta \right) TE\!\!\! & \!\!^{\underrightarrow{\ \ \tilde{\rho}%
\ \ }}\!\!\! & \!\!TE\vspace*{2mm} \\
(X^{a}\tilde{\partial}_{a}+\tilde{X}^{a}\dot{\tilde{\partial}}_{a})\!(u_{x})\!\!\!\!
& \!\!\longmapsto \!\!\! & \!\!(\!X^{a}(\rho _{a}^{i}{\circ h\circ
}\pi
)\partial _{i}{+}\tilde{X}^{a}\dot{\partial}_{a})\!(u_{x}).%
\end{array}%
\end{equation}%
Moreover, define the Lie bracket $[, ]_{( \rho ,\eta) TE}$ as follows
\begin{align}
\Big[X_{1}^{a}\tilde{\partial}_{a}+\tilde{X}_{1}^{a}\dot{\tilde{\partial}}%
_{a},X_{2}^{b}\tilde{\partial}_{b}+\tilde{X}_{2}^{b}\dot{\tilde{\partial}}_{b}\Big]%
_{\left( \rho ,\eta \right) TE}& =\Big[X_{1}^{a}S_{a}, X_{2}^{b}S_{b}\Big]%
_{\pi ^{\ast }E}\oplus \Big[X_{1}^{a}(\rho _{a}^{i}\circ h\circ \pi )%
\partial _{i}  \notag \\
& +\tilde{X}_{1}^{a}\dot{\partial}_{a},\Big.\hfill \displaystyle\Big.X_{2}^{b}(%
\rho _{b}^{j}\circ h\circ \pi )\partial _{j}+\tilde{X}_{2}^{b}\dot{\partial}_{b}%
\Big]_{TE}.
\end{align}%
Easily we obtain that $( \left[ ,\right] _{\left( \rho ,\eta \right) TE},\left( \tilde{%
\rho},Id_{E}\right)) $ is a Lie algebroid structure for the vector
bundle $\left( \left( \rho ,\eta \right) TE,\left( \rho ,\eta \right) \tau
_{E},E\right)$ which is called the generalized tangent bundle (see \cite{2, 3, 4}).
%&&&&&&&&&&&&&&&&&&&&&&&&&&&&&&&&&&&&&&&&&&&&&&&&&&&&&&&&&&&&&&&
\subsection{$\left( \protect\rho ,\protect\eta \right) $-connections}
%&&&&&&&&&&&&&&&&&&&&&&&&&&&&&&&&&&&&&&&&&&&&&&&&&&&&&&&&&&&&&&&
Consider the vector bundles morphism $\left( \left( \rho ,\eta \right)
\pi !,Id_{E}\right) $ given by the commutative diagram
\begin{equation}
\begin{array}{rcl}
\left( \rho ,\eta \right) TE & ^{\underrightarrow{~\ \left( \rho ,\eta
\right) \pi !~\ }} & (h\circ\pi)^{\ast }E \\
\left( \rho ,\eta \right) \tau _{E}\downarrow ~ &  & ~\downarrow(h\circ\pi)^{\ast
}\pi\\
E~\  & ^{\underrightarrow{~Id_{E}~}} & ~\ E%
\end{array}%
\end{equation}
by the rule
\begin{equation}
\begin{array}{c}
\left( \rho ,\eta \right) \pi !((X^{a}\tilde{\partial}_{a}+\tilde{X}^{a}\overset{%
\cdot }{\tilde{\partial}}_{a})(u_{x}))=(X^{a}S_{a})(u_{x}),%
\end{array}%
\end{equation}%
for any
$X^{a}\tilde{\partial}_{a}+\tilde{X}^{a}\dot{\tilde{\partial}}_{a}\in
\Gamma \left( \left( \rho ,\eta \right) TE,\left( \rho ,\eta \right)
\tau _{E},E\right)$.
Using the vector bundles morphism $(( \rho ,\eta)\pi!,Id_{E})$, the tangent $%
(\rho,\eta )$-application $((\rho,\eta ) T\pi ,\pi )$ from\\ $(( \rho ,\eta )
TE, ( \rho ,\eta ) \tau _E,E)$ to $( E,\pi ,M) $ will be obtained.

The kernel of the tangent $\left( \rho ,\eta \right) $-application
is denoted by $( V( \rho ,\eta ) TE,$ $( \rho
,\eta ) \tau _{E},E) $ and is called the\emph{
vertical subbundle}. Moreover, the set
$\{\dot{\tilde{\partial}}_{a}|~a\in 1,\cdots ,r\}$ is a base of the
$\mathcal{F}\left( E\right) $-module $\left( \Gamma \left( V\left(
\rho ,\eta \right) TE,\left( \rho ,\eta \right) \tau _{E},E\right)
,+,\cdot \right) $.

\begin{proposition}
The short sequence of vector bundles
\begin{equation}
\begin{array}{ccccccccc}
0 & \hookrightarrow & V\left( \rho ,\eta \right) TE & \hookrightarrow &
\left( \rho ,\eta \right) TE & ^{\underrightarrow{~\ \left( \rho ,\eta
\right) \pi !~\ }} & \left( h\circ \pi \right) ^{\ast }E & ^{%
\underrightarrow{}} & 0 \\
\downarrow &  & \downarrow &  & \downarrow &  & \downarrow &  & \downarrow
\\
E & ^{\underrightarrow{~Id_{E}~}} & E & ^{\underrightarrow{~Id_{E}~}} & E &
^{\underrightarrow{~Id_{E}~}} & E & ^{\underrightarrow{~Id_{E}~}} & E%
\end{array}
\label{3}
\end{equation}%
is exact.
\end{proposition}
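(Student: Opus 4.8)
The plan is to verify exactness separately at each of the three nonzero terms, exploiting the fact that every section of $(\rho,\eta)TE$ decomposes uniquely in the natural $(\rho,\eta)$-base as $X^{a}\tilde{\partial}_{a}+\tilde{X}^{a}\dot{\tilde{\partial}}_{a}$. Exactness at the left term $V(\rho,\eta)TE$ is just injectivity of the inclusion, which is immediate since it is a genuine inclusion of a subbundle. For exactness at the right term $(h\circ\pi)^{\ast}E$ I must show $(\rho,\eta)\pi!$ is surjective: given an arbitrary local section $X^{a}S_{a}$, I would exhibit the explicit preimage $X^{a}\tilde{\partial}_{a}$, i.e.\ take all vertical components $\tilde{X}^{a}$ equal to zero. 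By the defining rule $(\rho,\eta)\pi!\big((X^{a}\tilde{\partial}_{a})(u_{x})\big)=(X^{a}S_{a})(u_{x})$, so every generator $S_{a}$ is hit and the map is onto; this assignment also furnishes an explicit local splitting of the sequence.

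The heart of the argument is exactness in the middle, namely that the image of the inclusion equals $\ker(\rho,\eta)\pi!$. Here I would compute directly: a section $X^{a}\tilde{\partial}_{a}+\tilde{X}^{a}\dot{\tilde{\partial}}_{a}$ lies in the kernel if and only if $X^{a}S_{a}=0$. Since the $S_{a}$ form a local base of $(h\circ\pi)^{\ast}E$, they are linearly independent over $\mathcal{F}(E)$, which forces $X^{a}=0$. Hence the kernel consists exactly of the sections of the form $\tilde{X}^{a}\dot{\tilde{\partial}}_{a}$, i.e.\ the $\mathcal{F}(E)$-submodule generated by the vertical base $\{\dot{\tilde{\partial}}_{a}\mid a\in 1,\dots,r\}$. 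By the characterization of $V(\rho,\eta)TE$ recalled just before the statement, this submodule is precisely $\Gamma\big(V(\rho,\eta)TE,(\rho,\eta)\tau_{E},E\big)$, so the kernel coincides with the image of the inclusion, as required.

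I expect no serious obstacle: the whole verification reduces to the linear independence of the natural $(\rho,\eta)$-base together with the prescribed action of $(\rho,\eta)\pi!$ on it. The only point deserving care is to confirm that these computations, which are carried out in terms of a local frame, glue to an honest statement about vector bundles. This holds because $(\rho,\eta)\pi!$ is $\mathcal{F}(E)$-linear and the transition rule for the fibre coordinates $y^{a}$ is linear, so each morphism in the sequence has locally constant rank; consequently $\ker(\rho,\eta)\pi!$ is a subbundle and agrees globally with $V(\rho,\eta)TE$, completing the proof of exactness.
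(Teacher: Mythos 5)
Your proposal is correct: the kernel computation in the middle (linear independence of the $S_{a}$ forces $X^{a}=0$, so the kernel is the $\mathcal{F}(E)$-span of $\{\dot{\tilde{\partial}}_{a}\}$, which the paper identifies as $\Gamma\left( V\left( \rho ,\eta \right) TE,\left( \rho ,\eta \right) \tau _{E},E\right)$), together with the observation that $X^{a}\tilde{\partial}_{a}$ is an explicit preimage of $X^{a}S_{a}$, is exactly what is needed. The paper itself states this proposition without proof, treating it as immediate from the definition of $\left( \rho ,\eta \right) \pi !$ and of the vertical subbundle, so your verification simply makes explicit the routine check the paper leaves implicit.
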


A manifolds morphism $\left( \rho ,\eta \right) \Gamma $ from $\left( \rho
,\eta \right) TE$ to $V\left( \rho ,\eta \right) TE$ defined by
\begin{equation}
\begin{array}{c}
\left( \rho ,\eta \right) \Gamma
(X^{c}\tilde{\partial}_{c}+\tilde{X}^{a}\dot{\tilde{\partial}}_{a})(u_{x})=\left(
\tilde{X}^{a}+\left( \rho ,\eta \right)
\Gamma _{c}^{a}X^{c}\right) \dot{\tilde{\partial}}_{a}\left(u_{x}\right) ,%
\end{array}%
\end{equation}%
so that the vector bundles morphism $\left( \left( \rho ,\eta
\right) \Gamma ,Id_{E}\right) $ is a split to the left in the exact
sequence (\ref{3}), will be called $\left( \rho ,\eta \right)
$\emph{-connection for the vector bundle }$\left( E,\pi ,M\right) $.
If $\left( \rho ,\eta \right) \Gamma $ is a $\left( \rho ,\eta
\right) $-connection for the vector bundle $\left( E,\pi ,M\right)
$, then the kernel of the vector bundles morphism $\left( \left(
\rho ,\eta \right) \Gamma ,Id_{E}\right) $\ denoting by $\left(
H\left( \rho ,\eta \right) TE,\left( \rho ,\eta \right) \tau
_{E},E\right) $ is called the \emph{horizontal vector subbundle}.
Putting
\begin{equation}
\begin{array}[t]{l}
\tilde{\delta}_{a}=\tilde{\partial}_{a}-\left( \rho ,\eta \right)
\Gamma _{a}^{b}\dot{\tilde{\partial}}_{b}=S_{a}\oplus ((\rho
_{a}^{i}\circ h\circ
\pi )\partial _{i}-\left( \rho ,\eta \right) \Gamma _{a}^{b}\dot{\partial}%
_{b}),
\end{array}
\end{equation}
it is easy to see that $\{\tilde{\delta}_{a}|a=1,\cdots, r\}$ is a
base of the $\mathcal{F}(E)$ module
\begin{equation*}
\left( \Gamma \left( H\left( \rho ,\eta \right) TE,\left( \rho ,\eta
\right) \tau _{E},E\right) ,+,\cdot \right).
\end{equation*}%
The base $(\tilde{\delta}_{a},\dot{\tilde{\partial}}_{a})$ will be
called the \emph{adapted }$\left( \rho ,\eta \right) $\emph{-base.}
Let $(d\tilde{x}^{a},d\tilde{y}^{b})$ be the natural dual $\left(
\rho ,\eta
\right) $-base of natural $\left( \rho ,\eta \right) $-base $(\displaystyle%
\tilde{\partial}_{a},\displaystyle\dot{\tilde{\partial}}_{a})$. Then $\left( d\tilde{x}^{a},\delta \tilde{y}%
^{a}\right) $ is the adapted dual $\left( \rho ,\eta \right)$-base
of $(\tilde{\delta}_{a},\dot{\tilde{\partial}}_{a})$, where
\begin{equation}
\begin{array}{l}
\delta \tilde{y}^{a}=\left( \rho ,\eta \right) \Gamma _{c}^{a}d\tilde{x}%
^{c}+d\tilde{y}^{a},\ \ \ a\in 1,\cdots ,r.
\end{array}%
\end{equation}%
This base is called the \emph{adapted dual }$\left( \rho ,\eta \right) $%
\emph{-base}. One can deduce that 
\begin{equation}
\begin{array}{l}
\Gamma (\tilde{\rho},Id_{E})(\tilde{\delta}_{a})=(\rho _{a}^{i}\circ h\circ
\pi )\partial _{i}-\left( \rho ,\eta \right) \Gamma _{a}^{b}\dot{\partial}%
_{b},%
\end{array}%
\end{equation}%
where $(\partial _{i},\dot{\partial}_{a})$ is the natural base for the $%
\mathcal{F}(E)$-module $\left( \Gamma \left( TE,\tau _{E},E\right) ,+,\cdot
\right)$ \cite{2}.
\begin{theorem}
The equalities
\begin{align}
\lbrack \tilde{\delta}_{a},\tilde{\delta}_{b}]_{\left( \rho ,\eta \right)
TE}&=L_{ab}^{c}\circ h\circ \pi \cdot \tilde{\delta}_{c}+\left( \rho ,\eta
,h\right) \mathbb{R}_{\,\ ab}^{c}\dot{\tilde{\partial}}_{c},\\
\lbrack \tilde{\delta}_{a},\dot{\tilde{\partial}}_{b}]_{\left( \rho
,\eta \right) TE}&=\Gamma
(\tilde{\rho},Id_{E})(\dot{\tilde{\partial}}_{b})(\left(
\rho ,\eta \right) \Gamma _{a}^{c})\dot{\tilde{\partial}}_{c},\\
\lbrack
\dot{\tilde{\partial}}_{a},\dot{\tilde{\partial}}_{b}]_{\left( \rho
,\eta \right) TE}&=0,
\end{align}
where
\begin{align}
\left( \rho ,\eta ,h\right) \mathbb{R}_{\,\ ab}^{c}&=\Gamma (\tilde{\rho}%
,Id_{E})(\tilde{\delta}_{b})(\left( \rho ,\eta \right) \Gamma _{a}^{c})
-\Gamma (\tilde{\rho},Id_{E})(\tilde{\delta}_{a})(\left( \rho
,\eta \right) \Gamma _{b}^{c})\nonumber\\
&\ \ \ +(L_{ab}^{d}\circ h\circ \pi )
\left( \rho ,\eta \right) \Gamma _{d}^{c}.
\end{align}
are hold. Moreover
\begin{equation}
\begin{array}{c}
\Gamma (\tilde{\rho},Id_{E})[\tilde{\delta}_{a},\tilde{\delta}_{b}]_{\left(
\rho ,\eta \right) TE}=[\Gamma (\tilde{\rho},Id_{E})(\tilde{\delta}%
_{a}),\Gamma (\tilde{\rho},Id_{E})(\tilde{\delta}_{b})]_{TE}.%
\end{array}%
\end{equation}
\end{theorem}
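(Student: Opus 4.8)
My plan is to prove all four identities by direct computation inside the ambient sum $(h\circ\pi)^{\ast}E\oplus TE$, using only the definition of the bracket $[\,\cdot\,,\cdot\,]_{(\rho,\eta)TE}$ and the structure relations (4) of the pull-back Lie algebroid. I would start from the decomposition $\tilde{\delta}_{a}=S_{a}\oplus\big((\rho_{a}^{i}\circ h\circ\pi)\partial_{i}-(\rho,\eta)\Gamma_{a}^{b}\dot{\partial}_{b}\big)$ together with $\dot{\tilde{\partial}}_{a}=0_{\pi^{\ast}E}\oplus\dot{\partial}_{a}$. Since the bracket was defined componentwise, every computation will split into an $(h\circ\pi)^{\ast}E$-component, controlled entirely by the relations (4), and a $TE$-component, which is an ordinary Lie bracket of vector fields. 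Writing $V_{a}:=\Gamma(\tilde{\rho},Id_{E})(\tilde{\delta}_{a})=(\rho_{a}^{i}\circ h\circ\pi)\partial_{i}-(\rho,\eta)\Gamma_{a}^{b}\dot{\partial}_{b}$, the whole content of the theorem is carried by the $TE$-brackets of the $V_{a}$ and the $\dot{\partial}_{a}$.

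The two easy identities I would dispatch first. Because the functions $\rho_{a}^{i}\circ h\circ\pi$ depend only on base coordinates, one has $\dot{\partial}_{b}(\rho_{a}^{i}\circ h\circ\pi)=0$ and $[\dot{\partial}_{a},\dot{\partial}_{b}]_{TE}=0$; hence $[\dot{\tilde{\partial}}_{a},\dot{\tilde{\partial}}_{b}]_{(\rho,\eta)TE}$ vanishes in both components, giving the third identity, while $[\tilde{\delta}_{a},\dot{\tilde{\partial}}_{b}]_{(\rho,\eta)TE}$ collapses to $0\oplus[V_{a},\dot{\partial}_{b}]_{TE}=\dot{\partial}_{b}((\rho,\eta)\Gamma_{a}^{c})\dot{\partial}_{c}$. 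Rewriting $\dot{\partial}_{b}=\Gamma(\tilde{\rho},Id_{E})(\dot{\tilde{\partial}}_{b})$ and $\dot{\partial}_{c}=\dot{\tilde{\partial}}_{c}$ then gives the second identity.

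The real work is the $\tilde{\delta}_{a}$-$\tilde{\delta}_{b}$ bracket, whose first component is $(L_{ab}^{c}\circ h\circ\pi)S_{c}$ by (4). Splitting $V_{a}=A_{a}-B_{a}$ with $A_{a}=(\rho_{a}^{i}\circ h\circ\pi)\partial_{i}$ and $B_{a}=(\rho,\eta)\Gamma_{a}^{b}\dot{\partial}_{b}$, I would expand $[V_{a},V_{b}]_{TE}$ into four pieces. The decisive input is that the anchor of the pull-back Lie algebroid is a Lie-algebra morphism, which forces $[A_{a},A_{b}]_{TE}=(L_{ab}^{c}\circ h\circ\pi)A_{c}$; this is where $L_{ab}^{c}$ enters the horizontal part. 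The mixed brackets (again because the $A$-coefficients are $y$-independent) together with the vertical bracket $[B_{a},B_{b}]$ produce four $\dot{\partial}$-terms that reassemble precisely into $\big(V_{b}((\rho,\eta)\Gamma_{a}^{c})-V_{a}((\rho,\eta)\Gamma_{b}^{c})\big)\dot{\partial}_{c}$. The crucial bookkeeping step will be the passage to the adapted base: since $\tilde{\delta}_{c}$ carries the vertical piece $-(\rho,\eta)\Gamma_{c}^{d}\dot{\partial}_{d}$, rewriting the horizontal term $(L_{ab}^{c}\circ h\circ\pi)A_{c}$ as $(L_{ab}^{c}\circ h\circ\pi)\tilde{\delta}_{c}$ forces the vertical coefficient to absorb the extra summand $+(L_{ab}^{d}\circ h\circ\pi)(\rho,\eta)\Gamma_{d}^{c}$. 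The vertical coefficient then equals exactly $(\rho,\eta,h)\mathbb{R}^{c}_{ab}$ as defined by the $\mathbb{R}$-formula, while the first component matches automatically, establishing the first identity.

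For the final equation I would observe that essentially no new computation is needed: by the very definition of $[\,\cdot\,,\cdot\,]_{(\rho,\eta)TE}$, its $TE$-component is by construction the $TE$-bracket of the $TE$-components of the two arguments, and $\Gamma(\tilde{\rho},Id_{E})$ is precisely the projection onto that $TE$-factor; hence $\Gamma(\tilde{\rho},Id_{E})[\xi,\zeta]_{(\rho,\eta)TE}=[\Gamma(\tilde{\rho},Id_{E})(\xi),\Gamma(\tilde{\rho},Id_{E})(\zeta)]_{TE}$ for all sections, and in particular for $\xi=\tilde{\delta}_{a}$, $\zeta=\tilde{\delta}_{b}$. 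Alternatively one checks it directly by applying $\Gamma(\tilde{\rho},Id_{E})$ to the right-hand side of the first identity and watching the summand $(L_{ab}^{d}\circ h\circ\pi)(\rho,\eta)\Gamma_{d}^{c}$ inside $(\rho,\eta,h)\mathbb{R}^{c}_{ab}$ cancel against the vertical contribution of $(L_{ab}^{c}\circ h\circ\pi)\tilde{\delta}_{c}$, recovering $[V_{a},V_{b}]_{TE}$. The only genuine obstacle I anticipate is the sign- and index-bookkeeping in the four-term expansion of $[V_{a},V_{b}]_{TE}$ and its conversion into the adapted base; everything else is forced by the definitions and by (4).
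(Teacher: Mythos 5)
Your proposal is correct. Note that the paper states this theorem without proof (it is imported from reference [2]), so there is no in-paper argument to compare against; your direct componentwise computation supplies exactly the verification the paper omits. The key points all check out: the final identity is immediate once one observes, as you do, that the bracket (6) is defined componentwise and that $\Gamma(\tilde{\rho},Id_{E})$ acts on sections of $(\rho,\eta)TE$ precisely as the projection onto the $TE$-factor of $(h\circ\pi)^{\ast}E\oplus TE$; and your ``decisive input'' $[A_{a},A_{b}]_{TE}=(L_{ab}^{c}\circ h\circ\pi)A_{c}$ is legitimate, since the paper asserts that the pull-back structure is a genuine Lie algebroid over $E$ and the anchor of any Lie algebroid automatically induces a Lie-algebra morphism on sections. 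The remaining bookkeeping — the four-term expansion of $[V_{a},V_{b}]_{TE}$, using $\dot{\partial}_{b}(\rho_{a}^{i}\circ h\circ\pi)=0$, and then trading $(L_{ab}^{c}\circ h\circ\pi)A_{c}$ for $(L_{ab}^{c}\circ h\circ\pi)\tilde{\delta}_{c}$ at the cost of the extra vertical summand $(L_{ab}^{d}\circ h\circ\pi)(\rho,\eta)\Gamma_{d}^{c}$ — reproduces the stated formula for $(\rho,\eta,h)\mathbb{R}_{\,\ ab}^{c}$ exactly, so the plan goes through without obstruction.
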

%***************************************************************
\subsection{The $\left( g,h\right) $-lift of a differentiable curve}
%****************************************************************
Let $c:I\rightarrow M$ be a differentiable curve. Then $$( E_{|Im( \eta \circ h\circ
c) },\pi _{|Im( \eta \circ h\circ c) }, Im( \eta \circ
h\circ c) ),$$ is a vector subbundle of the vector bundle $\left(
E,\pi ,M\right) $. If
\begin{equation*}
\begin{array}{ccc}
I & ^{\underrightarrow{\ \ \dot{c}\ \ }} & E_{|Im\left( \eta \circ h\circ
c\right) } \\
t & \longmapsto & y^{a}\left( t\right) s_{a}\left( \left( \eta \circ h\circ
c\right) \left( t\right) \right),
\end{array}%
\end{equation*}%
is a differentiable curve such that there exists a vector bundles morphism $%
\left( g,h\right) $ from $\left( E,\pi ,M\right) $ to $\left( E,\pi
,M\right) $ satisfying in
\begin{equation}
\begin{array}{c}
\rho \circ g\circ \dot{c}\left( t\right) =\displaystyle\frac{d\left(
\eta \circ h\circ c\right) ^{i}\left( t\right) }{dt}\frac{\partial
}{\partial x^{i}}\left( \left( \eta \circ h\circ c\right) \left(
t\right) \right),\label{17J}
\end{array}
\end{equation}%
for any $t\in I$, then $\dot{c}$ is called {\it the $\left( g,h\right)
$-lift of the differentiable curve $c$}. Moreover, the section%
\begin{equation}
\begin{array}{ccc}
Im\left( \eta \circ h\circ c\right) & ^{\underrightarrow{u\left( c,\dot{c}%
\right) }} & E_{|Im\left( \eta \circ h\circ c\right) }\vspace*{1mm} \\
\eta \circ h\circ c\left( t\right) & \longmapsto & \dot{c}\left( t\right),
\end{array}
\label{eq38}
\end{equation}%
will be called the\emph{\ canonical section associated to the couple }$%
\left( c,\dot{c}\right) .$

\begin{definition}
\label{d18}If the vector bundles morphism $\left( g,h\right) $ has the
components
\begin{equation*}
\begin{array}{c}
g_{b}^{a};~a,b\in {1, \cdots, r},
\end{array}%
\end{equation*}%
such that for any vector local $\left( m+r\right) $-chart $(V%
,t_V) $ of $\left( E,\pi ,M\right) $ there exists the real functions
\begin{equation*}
\begin{array}{ccc}
V & ^{\underrightarrow{~\ \ \ \tilde{g}_{a}^{b}~\ \ }} & \mathbb{R}%
\end{array}%
;~a,b=1,\cdots, r,
\end{equation*}%
such that%
\begin{equation}
\begin{array}{c}
\tilde{g}_{c}^{b}\left( x\right) \cdot g_{a}^{c}\left( x\right) =\delta
_{a}^{b},%
\end{array}
\label{eq39}
\end{equation}%
for any $x\in V,$ then we will say that the vector bundles
morphism $\left( g,h\right) $ is locally invertible.
\end{definition}
Using the components of $(g, h)$, the condition (\ref{17J}) is
equivalent to
\begin{equation}\label{God}
\begin{array}[b]{c}
\rho _{d}^{i}\left( \eta \circ h\circ c\left( t\right) \right)
g_{a}^{d}\left( h\circ c\left( t\right) \right) y^{a}\left( t\right) =%
\frac{d\left( \eta \circ h\circ c\right) ^{i}\left( t\right)
}{dt},\ \ \ i\in {1, \cdots, m}.%
\end{array}
\end{equation}
\begin{remark}
\label{r19}\textrm{In particular, if }$\left( Id_{TM},Id_{M},Id_{M}\right)
=\left( \rho ,\eta ,h\right) $\textrm{\ and the vector bundles morphism }$%
\left( g,Id_{M}\right) $\textrm{\ is locally invertible, then we have the
differentiable }$\left( g,Id_{M}\right) $\textrm{-lift}%
\begin{equation}
\begin{array}{ccl}
I & ^{\underrightarrow{\ \ \dot{c}\ \ }} & TM \\
t & \longmapsto & \displaystyle\tilde{g}_{j}^{i}\left( c\left( t\right)
\right) \frac{dc^{j}\left( t\right) }{dt}\frac{\partial }{\partial x^{i}}%
\left( c\left( t\right) \right) .%
\end{array}
\label{eq40}
\end{equation}
Moreover, if $g=Id_{TM}$, then we obtain the usual lift of
tangent vectors
\begin{equation}
\begin{array}{ccl}
I & ^{\underrightarrow{\ \ \dot{c}\ \ }} & TM\vspace*{1mm} \\
t & \longmapsto & \displaystyle\frac{dc^{i}\left( t\right) }{dt}\frac{%
\partial }{\partial x^{i}}\left( c\left( t\right) \right) .%
\end{array}
\label{eq41}
\end{equation}
\end{remark}
\begin{definition}
\label{d20}If $%
\begin{array}{ccl}
I & ^{\underrightarrow{\ \ \dot{c}\ \ }} & E_{|Im\left( \eta \circ h\circ
c\right) }%
\end{array}%
$ is a differentiable $\left( g,h\right) $-lift of differentiable
curve $c,$ such that its component functions $y^{a}$, $a\in
\{1,\cdots, n\}$, are solutions for the differentiable system of equations
\begin{equation}
\begin{array}[b]{c}
\frac{du^{a}}{dt}+\left( \rho ,\eta \right) \Gamma _{d}^{a}\circ u\left( c,%
\dot{c}\right) \circ \left( \eta \circ h\circ c\right) \cdot g_{b}^{d}\circ
h\circ c\cdot u^{b}=0,%
\end{array}
\label{eq42}
\end{equation}%
then we will say that the $\left( g,h\right) $-lift $\dot{c}$
is parallel with respect to the $\left( \rho ,\eta \right) $-connection $\left( \rho ,\eta \right)\Gamma$.
\begin{remark}\label{r21}
In particular, if $\left( \rho ,\eta ,h\right) =\left(
Id_{TM},Id_{M},Id_{M}\right) $ and the vector bundles morphism $%
\left( g,Id_{M}\right) $ is locally invertible, then the
differentiable $\left( g,Id_{M}\right) $-lift
\begin{equation}
\begin{array}{ccl}
I & ^{\underrightarrow{\ \ \dot{c}\ \ }} & TM\vspace*{1mm} \\
t & \longmapsto & \displaystyle( \tilde{g}_{j}^{i}\circ c\cdot \frac{%
dc^{j}}{dt}) \frac{\partial }{\partial x^{i}}\left( c\left( t\right)
\right),%
\end{array}
\label{eq43}
\end{equation}%
is parallel with respect to the connection $\Gamma $ if
the component functions
\begin{equation*}
\begin{array}[b]{c}
 \tilde{g}_{j}^{i}\circ c\cdot \frac{dc^{j}}{dt},\ \ \ i\in \{1, \cdots, n\},
\end{array}%
\end{equation*}%
are solutions for the differentiable system of equations
\begin{equation}
\begin{array}[b]{c}
\frac{du^{i}}{dt}+\Gamma _{k}^{i}\circ u\left( c,\dot{c}\right) \circ c\cdot
g_{h}^{k}\circ c\cdot u^{h}=0,%
\end{array}
\label{eq44}
\end{equation}%
namely
\begin{align}\label{eq45}
\frac{d}{dt}( \tilde{g}_{j}^{i}( c( t)
) \cdot \frac{dc^{j}( t) }{dt})+\Gamma _{k}^{i}( ( \tilde{g}_{j}^{i}(
c( t))\frac{dc^{j}( t) }{dt})\frac{\partial }{\partial x^{i}}( c( t) )
)\frac{dc^{k}( t) }{dt}=0.%
\end{align}
Moreover, if $g=Id_{TM}$, then the usual lift of the tangent
vectors \eqref{eq41} is parallel with respect to the connectoin  $\Gamma $
  if the component functions $
\frac{dc^{j}}{dt},\ j\in \{1,\cdots, n\}$, are
solutions for the differentiable system
of equations
\begin{equation}
\begin{array}[b]{c}
\frac{du^{i}}{dt}+\Gamma _{k}^{i}\circ u\left( c,\dot{c}\right) \circ c\cdot
u^{k}=0,%
\end{array}
\label{eq46}
\end{equation}%
\textrm{namely}%
\begin{equation}
\begin{array}[b]{c}
\frac{d}{dt}\left( \frac{dc^{j}\left( t\right) }{dt}\right) +\Gamma
_{k}^{i}\left( \frac{dc^{j}\left( t\right) }{dt}\cdot \frac{\partial }{%
\partial x^{i}}\left( c\left( t\right) \right) \right) \cdot \frac{%
dc^{k}\left( t\right) }{dt}=0.%
\end{array}
\label{eq47}
\end{equation}
\end{remark}
\end{definition}
%*******************************************************************
\subsection{Remarkable modules endomorphisms}
%*******************************************************************
In the next of the paper we present the locally expression of a
section
$X=X^a\tilde{\partial}_a+\tilde{X}^a\dot{\tilde{\partial}}_a$ of the
generalized tangent bundle $((\rho, \eta)TE,(\rho, \eta)\tau_{E},
E)$ with respect to the adapted $(\rho, \eta)$-base
$\{\tilde{\delta}_a, \dot{\tilde{\partial}}_a\}$ as
\[
X=X^a\tilde{\delta}_a+\dot{\tilde{X}}^a\dot{\tilde{\partial}}_a,
\]
where $\dot{\tilde{X}}^a=\tilde{X}^a+(\rho, \eta)\Gamma^a_bX^b$.
\begin{defn}
A modules endomorphism $e$ of $\Gamma (\left( \rho ,\eta \right)
TE,\!\left( \rho ,\eta \right) \tau _{E},\!E)$ with the property
$e^{2}=e$ will be called projector.
\end{defn}
The followings are two important examples of the projectors.
\begin{example}
The modules endomorphism
\begin{equation*}
\begin{array}{rcl}
\Gamma \!(\left( \rho ,\eta \right) TE,\!\left( \rho ,\eta \right) \tau
_{E},\!E) & ^{\underrightarrow{\ \ \mathcal{V}\ \ }} & \Gamma \!(\left( \rho
,\eta \right) TE,\!\left( \rho ,\eta \right) \tau _{E},\!E) \\
X^{a}\tilde{\delta}_{a}+\dot{\tilde{X}}^{a}\dot{\tilde{\partial}}_{a} & \longmapsto & \dot{\tilde{X}}^{a}%
\dot{\tilde{\partial}}_{a},
\end{array}%
\end{equation*}%
is a projector which is called the vertical projector. It is obvious that $\mathcal{V}(\tilde{\delta}_{a})=0$ and $\mathcal{V}(\dot{%
\tilde{\partial}}_{a})=\dot{\tilde{\partial}}_{a}.$ Therefore
\begin{equation*}
\mathcal{V}(\tilde{\partial}_{a})=\left( \rho ,\eta \right) \Gamma _{a}^{b}%
\dot{\tilde{\partial}}_{b}.
\end{equation*}%
Also, it can be deduced that
\begin{equation*}
\begin{array}[b]{c}
\Gamma (\left( \rho ,\eta \right) \Gamma ,Id_{E})(X^{a}\tilde{\partial}%
_{a}+\tilde{X}^{a}\dot{\tilde{\partial}}_{a})=\mathcal{V}(X^{a}\tilde{\partial}%
_{a}+\tilde{X}^{a}\dot{\tilde{\partial}}_{a}),%
\end{array}%
\end{equation*}%
for any
$X^{a}\tilde{\partial}_{a}+\tilde{X}^{a}\dot{\tilde{\partial}}_{a}\in
\Gamma \left( \left( \rho ,\eta \right) TE,\rho \tau _{E},E\right)
.$
\end{example}
\begin{example}
The modules endomorphism
\begin{equation*}
\begin{array}{rcl}
\Gamma \left( \left( \rho ,\eta \right) TE,\left( \rho ,\eta \right) \tau
_{E},E\right) & ^{\underrightarrow{\ \ \mathcal{H}\ \ }} & \Gamma \left(
\left( \rho ,\eta \right) TE,\left( \rho ,\eta \right) \tau _{E},E\right) \\
X^{a}\tilde{\delta}_{a}+\dot{\tilde{X}}^{a}\dot{\tilde{\partial}}_{a} & \longmapsto & X^{a}%
\tilde{\delta}_{a},
\end{array}%
\end{equation*}%
is a projector which is called the \textit{horizontal projector}. It is easy to see that $\mathcal{H}( \tilde{\delta}_{a}) =\tilde{\delta}%
_{\alpha }$ and $\mathcal{H}\big(\dot{\tilde{\partial}}_{a}\big)=0.$
Therefore $\mathcal{H}( \tilde{\partial}_{\alpha })
=\tilde{\delta}_{\alpha }$.
\end{example}
From the above examples we result that any $X\in \Gamma \left(
\left( \rho ,\eta \right) TE,\left( \rho ,\eta
\right) \tau _{E},E\right) $ has the unique decomposition $X=\mathcal{H}X+%
\mathcal{V}X$.
\begin{theorem}
A $\left( \rho ,\eta \right) $-connection for the vector bundle
$(E,\pi ,M)$ is characterized by the existence of a modules
endomorphism $\mathcal{V}$ of $\Gamma \left( \left( \rho ,\eta
\right) TE,\rho \tau _{E},E\right) $ with the properties
\begin{enumerate}
\item[(i)] $\mathcal{V}\left( \Gamma \left( \left( \rho ,\eta \right)
TE,\left( \rho ,\eta \right) \tau _{E},E\right) \right) \subset
\Gamma \left( V\left( \rho ,\eta \right) TE,\left( \rho ,\eta
\right) \tau _{E},E\right) $,

\item[(ii)] $\mathcal{V}\left( X\right) =X\ \ \ if\ \ and \ \  only \ \  if\ \ \ X\in
\Gamma \left( V\left( \rho ,\eta \right) TE,\left( \rho ,\eta
\right) \tau _{E},E\right) $.
\end{enumerate}
\end{theorem}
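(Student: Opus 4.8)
The plan is to exhibit a bijection between $(\rho,\eta)$-connections for $(E,\pi,M)$ and module endomorphisms $\mathcal{V}$ satisfying (i)--(ii), where the endomorphism attached to a connection is exactly the vertical projector of the preceding Example. Concretely, I would assign to a connection $(\rho,\eta)\Gamma$ the endomorphism $\Gamma((\rho,\eta)\Gamma,Id_{E})$ (viewed inside $\Gamma((\rho,\eta)TE,(\rho,\eta)\tau_{E},E)$ via the inclusion $V(\rho,\eta)TE\hookrightarrow(\rho,\eta)TE$), and conversely reconstruct the connection coefficients from a given $\mathcal{V}$.

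For the direct implication, let $(\rho,\eta)\Gamma$ be a $(\rho,\eta)$-connection and set $\mathcal{V}:=\Gamma((\rho,\eta)\Gamma,Id_{E})$. Since by definition $(\rho,\eta)\Gamma$ takes values in the vertical subbundle, property (i) is immediate. Because $((\rho,\eta)\Gamma,Id_{E})$ is a left splitting of the exact sequence (\ref{3}), the composite $\mathcal{V}\circ i$ is the identity on $\Gamma(V(\rho,\eta)TE,(\rho,\eta)\tau_{E},E)$, which yields the ``if'' part of (ii); the ``only if'' part then follows from (i), since $\mathcal{V}(X)=X$ forces $X=\mathcal{V}(X)$ to be vertical. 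As a byproduct, (i) together with the ``if'' part of (ii) gives $\mathcal{V}^{2}=\mathcal{V}$, so $\mathcal{V}$ is genuinely a projector in the sense of the Definition above, consistent with the Example.

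For the converse, I would recover the connection from $\mathcal{V}$. The generators $\dot{\tilde{\partial}}_{a}$ are vertical, so (ii) gives $\mathcal{V}(\dot{\tilde{\partial}}_{a})=\dot{\tilde{\partial}}_{a}$. Property (i) forces $\mathcal{V}(\tilde{\partial}_{a})$ into the vertical subbundle, whence there exist unique functions $(\rho,\eta)\Gamma_{a}^{b}\in\mathcal{F}(E)$ with $\mathcal{V}(\tilde{\partial}_{a})=(\rho,\eta)\Gamma_{a}^{b}\dot{\tilde{\partial}}_{b}$. Taking these as connection coefficients, I define the manifolds morphism $(\rho,\eta)\Gamma$ through the defining formula for a $(\rho,\eta)$-connection. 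Using $\mathcal{F}(E)$-linearity of $\mathcal{V}$, the expansion $\mathcal{V}(X^{a}\tilde{\partial}_{a}+\tilde{X}^{a}\dot{\tilde{\partial}}_{a})=(\tilde{X}^{b}+(\rho,\eta)\Gamma_{a}^{b}X^{a})\dot{\tilde{\partial}}_{b}$ shows both that $\mathcal{V}$ coincides with the vertical projector determined by these coefficients and that $((\rho,\eta)\Gamma,Id_{E})$ splits (\ref{3}) on the left, the latter being exactly the ``if'' part of (ii) reread as $\mathcal{V}\circ i=Id$ on the vertical subbundle.

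The step needing the most care is the passage, in the converse, from the global module endomorphism $\mathcal{V}$ to local coefficient functions $(\rho,\eta)\Gamma_{a}^{b}$ that patch into a genuine connection. The key point is that $\mathcal{F}(E)$-linearity makes $\mathcal{V}$ pointwise, i.e.\ it arises from a vector bundles endomorphism, so the decomposition of $\mathcal{V}(\tilde{\partial}_{a})$ in the vertical base is canonical and the coefficients automatically obey the correct transformation rule under a change of the natural $(\rho,\eta)$-base; no further compatibility has to be verified. Everything else is routine bookkeeping with the adapted $(\rho,\eta)$-base already established before the statement.
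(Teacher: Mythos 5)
Your proof is correct, and there is nothing in the paper to compare it against: the paper states this theorem bare, immediately after the examples of the vertical and horizontal projectors, with no proof attached (the result is evidently quoted from the author's earlier work). Your argument is the natural one given the paper's own setup, and it is complete. The forward direction is exactly the observation that a $(\rho,\eta)$-connection is, by definition, a left splitting of the short exact sequence (\ref{3}) taking values in $V(\rho,\eta)TE$, which yields (i) and the ``if'' half of (ii) at once, with the ``only if'' half then following from (i); your remark that (i) together with the ``if'' half already forces $\mathcal{V}^{2}=\mathcal{V}$ correctly ties the statement back to the paper's notion of projector. The converse correctly recovers the coefficients from $\mathcal{V}(\tilde{\partial}_{a})=(\rho,\eta)\Gamma_{a}^{b}\dot{\tilde{\partial}}_{b}$, which is legitimate because (i) places $\mathcal{V}(\tilde{\partial}_{a})$ in the vertical module and the paper has established that $\{\dot{\tilde{\partial}}_{a}\}$ is a base of that module; expanding by $\mathcal{F}(E)$-linearity then reproduces the paper's defining formula for a $(\rho,\eta)$-connection and its splitting property. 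You also correctly isolate the one genuinely non-routine point, namely that $\mathcal{F}(E)$-linearity makes $\mathcal{V}$ tensorial, hence induced by a vector bundles endomorphism, so the locally defined coefficients patch together and obey the correct transformation rule; this is precisely what licenses calling the reconstructed object a $(\rho,\eta)$-connection, and it is the step a careless write-up would have skipped.
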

\begin{theorem}
A $\left( \rho ,\eta \right) $-connection for the vector bundle $\left(
E,\pi ,M\right) $ is characterized by the existence of a modules
endomorphism $\mathcal{H}$ of  $\Gamma \left( \left( \rho ,\eta \right)
TE,\left( \rho ,\eta \right) \tau _{E},E\right) $ with the properties
\begin{enumerate}
\item[(i)] $\mathcal{H}\left( \Gamma \left( \left( \rho ,\eta \right)
TE,\left( \rho ,\eta \right) \tau _{E},E\right) \right) \subset \Gamma
\left( H\left( \rho ,\eta \right) TE,\left( \rho ,\eta \right) \tau
_{E},E\right) $,

\item[(ii)] $\mathcal{H}\left( X\right) =X\ \ \ if \ \ and\ \  only\ \  if\ \ \ X\in
\Gamma \left( H\left( \rho ,\eta \right) TE,\left( \rho ,\eta \right) \tau
_{E},E\right) $.
\end{enumerate}
\end{theorem}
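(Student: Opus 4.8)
The statement is a characterization (``if and only if''), so the plan is to prove both implications, exploiting the duality with the immediately preceding theorem for the vertical projector $\mathcal{V}$ together with the unique decomposition $X=\mathcal{H}X+\mathcal{V}X$ recorded after the two Examples.

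For the necessity, I would start from a $\left( \rho ,\eta \right)$-connection $\left( \rho ,\eta \right)\Gamma$ and take $\mathcal{H}$ to be the horizontal projector of the corresponding Example, i.e. $X^{a}\tilde{\delta}_{a}+\dot{\tilde{X}}^{a}\dot{\tilde{\partial}}_{a}\longmapsto X^{a}\tilde{\delta}_{a}$. Property (i) is then immediate, since the image of $\mathcal{H}$ is generated by the $\tilde{\delta}_{a}$ and $\{\tilde{\delta}_{a}\}$ is a base of $\Gamma \left( H\left( \rho ,\eta \right) TE,\left( \rho ,\eta \right) \tau _{E},E\right)$. For property (ii), writing an arbitrary $X$ in the adapted $\left( \rho ,\eta \right)$-base and using $\mathcal{H}(\tilde{\delta}_{a})=\tilde{\delta}_{a}$ and $\mathcal{H}(\dot{\tilde{\partial}}_{a})=0$, the equality $\mathcal{H}(X)=X$ is seen to hold exactly when the vertical components $\dot{\tilde{X}}^{a}$ all vanish, which is precisely the condition $X\in \Gamma \left( H\left( \rho ,\eta \right) TE,\left( \rho ,\eta \right) \tau _{E},E\right)$.

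For the sufficiency, given a modules endomorphism $\mathcal{H}$ with (i) and (ii), I would first deduce that it is a projector: for any $X$, property (i) gives $\mathcal{H}(X)\in \Gamma \left( H\left( \rho ,\eta \right) TE,\dots\right)$, and the ``if'' part of (ii) then forces $\mathcal{H}\big(\mathcal{H}(X)\big)=\mathcal{H}(X)$, so $\mathcal{H}^{2}=\mathcal{H}$. I would next set $\mathcal{V}:=\operatorname{Id}-\mathcal{H}$, which is again a projector, and verify that $\mathcal{V}$ satisfies the two hypotheses of the preceding theorem characterizing connections through the vertical projector; the sought $\left( \rho ,\eta \right)$-connection is then the one produced by that theorem, its coefficients being read off from $\mathcal{V}(\tilde{\partial}_{a})=\left( \rho ,\eta \right)\Gamma_{a}^{b}\dot{\tilde{\partial}}_{b}$. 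To close the equivalence I would finally check that the horizontal projector attached to this recovered connection coincides with the original $\mathcal{H}$.

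The main obstacle is the reverse direction, and more precisely the step of verifying that $\mathcal{V}=\operatorname{Id}-\mathcal{H}$ actually maps into the \emph{canonical} vertical subbundle, that is, that $\ker \mathcal{H}=\Gamma \left( V\left( \rho ,\eta \right) TE,\left( \rho ,\eta \right) \tau _{E},E\right)$. Unlike $V\left( \rho ,\eta \right) TE$, the horizontal subbundle $H\left( \rho ,\eta \right) TE$ is not intrinsic, so the crux is to show that the fixed-point module $\operatorname{Im}(\mathcal{H})$ supplied by (i)--(ii) is genuinely complementary to the vertical one inside $\Gamma \left( \left( \rho ,\eta \right) TE,\left( \rho ,\eta \right) \tau _{E},E\right)$; once this complementarity is secured, the two conditions on $\mathcal{V}$ follow formally and the previous theorem applies directly. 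Everything else reduces to routine bookkeeping in the adapted $\left( \rho ,\eta \right)$-base.
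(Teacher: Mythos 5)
Your necessity direction is fine: taking $\mathcal{H}$ to be the horizontal projector of the paper's Example and computing in the adapted $\left(\rho,\eta\right)$-base settles (i) and (ii), and your derivation of $\mathcal{H}^{2}=\mathcal{H}$ from (i) together with the ``if'' half of (ii) is also correct. Note that the paper itself states this theorem (and its $\mathcal{V}$-counterpart) with no proof at all, so both directions must stand on their own.

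The converse, however, contains a genuine gap, and it is worse than the one you flag. You correctly reduce the problem to showing $\ker\mathcal{H}=\Gamma\left(V\left(\rho,\eta\right)TE,\left(\rho,\eta\right)\tau_{E},E\right)$, but your proposed route --- establish that $\operatorname{Im}(\mathcal{H})$ is complementary to the vertical module, ``after which the two conditions on $\mathcal{V}$ follow formally'' --- fails, because complementarity of the image of a projector does not determine its kernel. Concretely: fix any connection with adapted base $(\tilde{\delta}_{a},\dot{\tilde{\partial}}_{a})$ and define the modules endomorphism $\mathcal{H}'$ by $\mathcal{H}'(\tilde{\delta}_{a})=\tilde{\delta}_{a}$ and $\mathcal{H}'(\dot{\tilde{\partial}}_{a})=\tilde{\delta}_{a}$. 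Then $\mathcal{H}'(X)=(X^{a}+\dot{\tilde{X}}^{a})\tilde{\delta}_{a}$, so (i) holds, the fixed points of $\mathcal{H}'$ are exactly the horizontal sections, so (ii) holds, and $\operatorname{Im}(\mathcal{H}')$ is the horizontal module, which is complementary to the vertical one; yet $(\operatorname{Id}-\mathcal{H}')(\dot{\tilde{\partial}}_{a})=\dot{\tilde{\partial}}_{a}-\tilde{\delta}_{a}$ is not vertical, so $\operatorname{Id}-\mathcal{H}'$ is not a left splitting of the exact sequence and yields no connection by the $\mathcal{V}$-theorem. Moreover, no argument can close this gap from (i)--(ii) alone: since $H\left(\rho,\eta\right)TE$ is not canonical, one may even take the subbundle in the statement to be $V\left(\rho,\eta\right)TE$ itself and $\mathcal{H}=\mathcal{V}$, which satisfies (i)--(ii) vacuously but is complementary to nothing. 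The kernel condition $\ker\mathcal{H}=\Gamma\left(V\left(\rho,\eta\right)TE,\left(\rho,\eta\right)\tau_{E},E\right)$ is genuinely additional data --- it is precisely hypothesis (ii) of the Corollary that immediately follows this theorem in the paper --- so the converse can only be proved either under that reading, or by interpreting the statement as positing a subbundle complementary to $V\left(\rho,\eta\right)TE$, in which case the connection is recovered from that complementary splitting rather than from $\ker\mathcal{H}$.
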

\begin{cor}
A $\left( \rho ,\eta \right) $-connection for the vector bundle $\left(
E,\pi ,M\right) $ is characterized by the existence of a modules
endomorphism $\mathcal{H}$ of  $\Gamma \left( \left( \rho ,\eta \right)
TE,\left( \rho ,\eta \right) \tau _{E},E\right) $ with the properties
\begin{enumerate}
\item[(i)] $\mathcal{H}^{2}=\mathcal{H}$,

\item[(ii)] $Ker\left( \mathcal{H}\right) =\left( \Gamma \left( V\left( \rho
,\eta \right) TE,\left( \rho ,\eta \right) \tau _{E},E\right) ,+,\cdot
\right) $.
\end{enumerate}
\end{cor}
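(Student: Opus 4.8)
The plan is to read the corollary off the preceding theorem, proving the biconditional ``$(\rho,\eta)$-connection $\Leftrightarrow$ existence of $\mathcal{H}$ with (i)--(ii)'' directly in its two directions. The single algebraic input I will rely on is the standard fact that an idempotent module endomorphism $\mathcal{H}$ splits its domain as $\mathrm{Im}(\mathcal{H})\oplus\mathrm{Ker}(\mathcal{H})$, with $\mathrm{Im}(\mathcal{H})$ equal to the submodule of vectors fixed by $\mathcal{H}$; I will apply this with $\mathrm{Ker}(\mathcal{H})$ forced to be the vertical module $\Gamma(V(\rho,\eta)TE,(\rho,\eta)\tau_E,E)$, which is fixed once and for all, independently of any connection.

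For the forward implication I would start from a $(\rho,\eta)$-connection and take $\mathcal{H}$ to be the horizontal projector of the second example, determined by $\mathcal{H}(\tilde{\delta}_a)=\tilde{\delta}_a$ and $\mathcal{H}(\dot{\tilde{\partial}}_a)=0$. Idempotency (property (i)) is immediate from these two values on the adapted $(\rho,\eta)$-base, and property (ii) is equally direct: an element $X^a\tilde{\delta}_a+\dot{\tilde{X}}^a\dot{\tilde{\partial}}_a$ lies in $\mathrm{Ker}(\mathcal{H})$ exactly when $X^a\tilde{\delta}_a=0$, i.e. exactly when it is of the form $\dot{\tilde{X}}^a\dot{\tilde{\partial}}_a$, and the submodule generated by the $\dot{\tilde{\partial}}_a$ is precisely $\Gamma(V(\rho,\eta)TE,(\rho,\eta)\tau_E,E)$.

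For the converse I would assume $\mathcal{H}$ satisfies (i)--(ii) and recover a connection by invoking the theorem. Idempotency gives the decomposition of $\Gamma((\rho,\eta)TE,(\rho,\eta)\tau_E,E)$ into $\mathrm{Im}(\mathcal{H})$ and $\mathrm{Ker}(\mathcal{H})=\Gamma(V(\rho,\eta)TE,(\rho,\eta)\tau_E,E)$; hence $H:=\mathrm{Im}(\mathcal{H})$ is a genuine complement of the vertical module, which is the very datum of a $(\rho,\eta)$-connection. To conclude via the theorem I need only check its two hypotheses for this $H$: condition (i) there holds because $\mathcal{H}(\Gamma((\rho,\eta)TE,(\rho,\eta)\tau_E,E))=\mathrm{Im}(\mathcal{H})=H$, and condition (ii) there holds because $\mathcal{H}X=X$ puts $X$ in $\mathrm{Im}(\mathcal{H})=H$, while any $X=\mathcal{H}Y\in H$ satisfies $\mathcal{H}X=\mathcal{H}^2Y=\mathcal{H}Y=X$ by idempotency.

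The point that needs care, and really carries the content, is the kernel condition (ii): the two conditions of the theorem determine the image of $\mathcal{H}$ (equivalently its fixed submodule) but say nothing by themselves about $\mathrm{Ker}(\mathcal{H})$, so the equivalence is not a purely formal restatement. What makes it work is that the vertical subbundle is connection-independent and that the connection's horizontal projector annihilates the $\dot{\tilde{\partial}}_a$; once those are used, the remaining steps are just the bookkeeping of the idempotent splitting $X=\mathcal{H}X+\mathcal{V}X$ recorded before the theorem.
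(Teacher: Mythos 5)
Your proof is correct and follows essentially the route the paper intends: the corollary is stated there without proof as an immediate consequence of the preceding theorem, and your two directions (a connection yields the horizontal projector $\mathcal{H}(\tilde{\delta}_a)=\tilde{\delta}_a$, $\mathcal{H}(\dot{\tilde{\partial}}_a)=0$, which visibly satisfies (i)--(ii); conversely, an idempotent $\mathcal{H}$ with $\mathrm{Ker}(\mathcal{H})=\Gamma\left(V\left(\rho,\eta\right)TE,\left(\rho,\eta\right)\tau_{E},E\right)$ gives the splitting $\Gamma\left(\left(\rho,\eta\right)TE,\left(\rho,\eta\right)\tau_{E},E\right)=\mathrm{Im}(\mathcal{H})\oplus\mathrm{Ker}(\mathcal{H})$, i.e. a complement of the vertical module, i.e. a left split of the exact sequence defining a $\left(\rho,\eta\right)$-connection) are exactly that derivation, and you correctly flag that the kernel condition is the genuinely new ingredient not implied by the theorem's own hypotheses. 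One cosmetic remark: once you observe that $\mathrm{Im}(\mathcal{H})$ is a complement of the vertical module --- equivalently that $Id-\mathcal{H}$ is a left splitting --- the connection already exists, so the subsequent appeal to the theorem with $H:=\mathrm{Im}(\mathcal{H})$ is redundant rather than load-bearing (and is best phrased after the connection is in hand, since the theorem's conditions presuppose a horizontal subbundle).
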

\begin{defn}
A modules endomorphism $e$ of $\Gamma \left( \left( \rho ,\eta
\right)
TE,\break (\left( \rho ,\eta \right) \tau _{E},E\right) $ with the property $%
e^{2}=Id$ is called the almost product structure.
\end{defn}
\begin{example}
The modules endomorphism
\begin{equation}
\begin{array}{rcl}
\Gamma \left( \left( \rho ,\eta \right) TE,\left( \rho ,\eta \right) \tau
_{E},E\right) & ^{\underrightarrow{\ \ \mathcal{P}\ \ }} & \Gamma \left(
\left( \rho ,\eta \right) TE,\left( \rho ,\eta \right) \tau _{E},E\right) \\
X^{a}\tilde{\delta}_{a}+\dot{\tilde{X}}^{a}\dot{\tilde{\partial}}_{a} & \longmapsto & X^{a}%
\tilde{\delta}_{a}-\dot{\tilde{X}}^{a}\dot{\tilde{\partial}}_{a},%
\end{array}
\label{4}
\end{equation}%
is an almost product structure. Thus we have $\mathcal{P}(\tilde{\delta}_{a})=\tilde{\delta}_{a}$ and $\mathcal{P}(%
\dot{\tilde{\partial}}_{a})=-\dot{\tilde{\partial}}_{a}$, which give us $\mathcal{P}(%
\tilde{\partial}_{\alpha })=\tilde{\delta}_{a}-\left( \rho ,\eta
\right) \Gamma _{a}^{b}\dot{\tilde{\partial}}_{b}$.
\end{example}
\begin{theorem}
A $\left( \rho ,\eta \right) $-connection for the vector bundle $\left(
E,\pi ,M\right) $ is characterized by the existence of a modules
endomorphism $\mathcal{P}$ of $\Gamma \left( \left( \rho ,\eta \right)
TE,\left( \rho ,\eta \right) \tau _{E},E\right) $ by the equivalence
\begin{equation*}
\mathcal{P}\left( X\right) =-X\ \ \ if\ \  and\ \  only\ \  if\ \ \ X\in \Gamma \left(
V\left( \rho ,\eta \right) TE,\left( \rho ,\eta \right) \tau _{E},E\right) .
\end{equation*}
\end{theorem}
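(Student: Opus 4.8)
The plan is to reduce the statement to the earlier theorem that characterizes a $(\rho,\eta)$-connection through the vertical projector $\mathcal{V}$, using the standard involution-to-projector correspondence $\mathcal{V}=\tfrac{1}{2}(\mathrm{Id}-\mathcal{P})$. Here $\mathcal{P}$ is understood, as in the preceding Definition and Example, to be an almost product structure, so that $\mathcal{P}^{2}=\mathrm{Id}$ is available; this identity is what turns the equivalence ``$\mathcal{P}(X)=-X$ iff $X$ vertical'' into a genuine splitting of $(\rho,\eta)TE$ into the $\pm 1$ eigenbundles of $\mathcal{P}$.

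For the direct implication, I would assume a $(\rho,\eta)$-connection is given and invoke the Example immediately preceding the statement, which produces the almost product structure $\mathcal{P}$ with $\mathcal{P}(\tilde{\delta}_a)=\tilde{\delta}_a$ and $\mathcal{P}(\dot{\tilde{\partial}}_a)=-\dot{\tilde{\partial}}_a$. Expanding an arbitrary section in the adapted $(\rho,\eta)$-base as $X=X^{a}\tilde{\delta}_a+\dot{\tilde{X}}^{a}\dot{\tilde{\partial}}_a$ gives $\mathcal{P}(X)=X^{a}\tilde{\delta}_a-\dot{\tilde{X}}^{a}\dot{\tilde{\partial}}_a$, so the condition $\mathcal{P}(X)=-X$ forces $X^{a}\tilde{\delta}_a=0$. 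Since $\{\tilde{\delta}_a,\dot{\tilde{\partial}}_a\}$ is a base, this is equivalent to $X^{a}=0$, that is, to $X\in\Gamma(V(\rho,\eta)TE,(\rho,\eta)\tau_{E},E)$, which is exactly the asserted equivalence.

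For the converse, I would start from an almost product structure $\mathcal{P}$ satisfying the equivalence and set $\mathcal{V}=\tfrac{1}{2}(\mathrm{Id}-\mathcal{P})$, which is again a modules endomorphism. I then verify the two hypotheses of the $\mathcal{V}$-characterization theorem. For property (i), using $\mathcal{P}^{2}=\mathrm{Id}$ I compute $\mathcal{P}(\mathcal{V}X)=\tfrac{1}{2}(\mathcal{P}-\mathcal{P}^{2})(X)=\tfrac{1}{2}(\mathcal{P}-\mathrm{Id})(X)=-\mathcal{V}(X)$, so the hypothesis on $\mathcal{P}$ yields $\mathcal{V}(X)\in\Gamma(V(\rho,\eta)TE,(\rho,\eta)\tau_{E},E)$ for every $X$. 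For property (ii), if $X$ is vertical then $\mathcal{P}(X)=-X$ and hence $\mathcal{V}(X)=\tfrac{1}{2}(X-\mathcal{P}X)=X$; conversely, $\mathcal{V}(X)=X$ gives $\mathcal{P}(X)=-X$, whence $X$ is vertical by the hypothesis. Applying the cited characterization theorem to this $\mathcal{V}$ then produces a $(\rho,\eta)$-connection.

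I do not expect a genuine obstacle: the argument is the elementary linear algebra of the $\pm 1$ eigenspaces of an involution over the ring $\mathcal{F}(E)$. The only point requiring care is the systematic use of $\mathcal{P}^{2}=\mathrm{Id}$ to guarantee that $\mathcal{V}=\tfrac{1}{2}(\mathrm{Id}-\mathcal{P})$ is idempotent with image and fixed-point set both equal to the vertical subbundle, so that the previously established $\mathcal{V}$-characterization applies verbatim; one should also note, for completeness, that the $+1$ eigenbundle of $\mathcal{P}$ is the resulting horizontal subbundle $H(\rho,\eta)TE$.
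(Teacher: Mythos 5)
Your proposal is correct, and it follows exactly the route the paper intends: the paper states this theorem without giving any proof, but immediately afterwards records the relation $\mathcal{P}=Id-2\mathcal{V}$, which is precisely the correspondence $\mathcal{V}=\tfrac{1}{2}\left(Id-\mathcal{P}\right)$ you use to reduce the statement to the earlier $\mathcal{V}$-characterization theorem. Your explicit insistence on the almost product hypothesis $\mathcal{P}^{2}=Id$ is also well placed, since without it the eigenvalue condition alone would not single out a horizontal complement and the converse direction would fail.
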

It is known that the following relations hold between the vertical
projector, horizontal projector and almost product structure.
\begin{equation*}
\mathcal{P}=\left( 2\mathcal{H}-Id\right) ; \ \ \ \ \mathcal{P}=\left( Id-2\mathcal{V}\right) ; \ \ \ \ \mathcal{P}=\left( \mathcal{H}-\mathcal{V}\right) .
\end{equation*}
\begin{defn}
A modules endomorphism $e$ of $\left( \Gamma \!\left( \rho ,\eta
\right)
TE,\break \left( \rho ,\eta \right) \tau _{E},E\right) $ with the property $%
e^{2}=0$ is called the almost tangent structure.
\end{defn}
\begin{example}
If $g$ is a manifolds morphism on $E$ such that $\left(
g,h\right) $ is a locally invertible vector bundles morphism, then the
modules endomorphism
\begin{equation*}
\begin{array}{rcl}
\Gamma \left( \rho TE,\rho \tau _{E},E\right) & ^{\underrightarrow{\mathcal{J%
}_{\left( g,h\right) }}} & \Gamma \left( \rho TE,\rho \tau _{E},E\right) \\
X^{a}\tilde{\delta}_{a}+\dot{\tilde{X}}^{a}\dot{\tilde{\partial}}_{a}
& \longmapsto &
\left( \tilde{g}_{a}^{b}\circ h\circ \pi \right) X^{a}\dot{\tilde{\partial}}%
_{b},%
\end{array}%
\end{equation*}%
is an almost tangent structure which is called the almost tangent
structure associated to the vector bundles morphism $\left( g,h\right) $.
This almost tangent structure has the properties
\begin{equation*}
\mbox{$\mathcal{J}_{( g,h)
}( \tilde{\delta}_{a}) =\mathcal{J}_{( g,h) }(
\tilde{\partial}_{a}) =( \tilde{g}_{a}^{b}\circ h\circ \pi
) \dot{\tilde{\partial}}_{b}$ and $\mathcal{J}_{(
g,h) }( \dot{\tilde{\partial}}_{b}) =0.$}
\end{equation*}%
Moreover, the following equations are hold.
\begin{equation*}
\begin{array}{rcl}
\mathcal{J}_{_{\left( g,h\right) }}\circ \mathcal{P} & = & \mathcal{J}%
_{_{\left( g,h\right) }},\ \ \
\mathcal{P}\circ \mathcal{J}_{_{\left( g,h\right) }}  = -\mathcal{J}%
_{_{\left( g,h\right) }},\ \ \
\mathcal{J}_{_{\left( g,h\right) }}\circ \mathcal{H} =  \mathcal{J}%
_{_{\left( g,h\right) }},\vspace*{1mm} \\
\mathcal{H}\circ \mathcal{J}_{_{\left( g,h\right) }} & = & 0,\ \ \
\mathcal{J}_{_{\left( g,h\right) }}\circ \mathcal{V}= 0,\ \ \
\mathcal{V}\circ \mathcal{J}_{_{\left( g,h\right) }}= \mathcal{J}%
_{_{\left( g,h\right) }}.%
\end{array}%
\end{equation*}
\end{example}
%*************************************************************************
\section{Distinguished linear $\left( \protect\rho ,\protect\eta \right)$%
-connections}
%*************************************************************************
Let $
\left( \mathcal{T}~_{q,s}^{p,r}\left( \left( \rho ,\eta \right) TE,\left(
\rho ,\eta \right) \tau _{E},E\right) ,+,\cdot \right)
$
be the $\mathcal{F}\left( E\right) $-module of tensor fields by $\left(
_{q,s}^{p,r}\right) $-type from the generalized tangent bundle
\begin{equation*}
\left( H\left( \rho ,\eta \right) TE,\rho \tau _{E},E\right) \oplus \left(
\left( \rho ,\eta \right) V\rho TE,\rho \tau _{E},E\right) .
\end{equation*}
An arbitrary tensor field $T$ is written as
\begin{equation*}
\begin{array}{c}
T=T_{b_{1}...b_{q}e_{1}...e_{s}}^{a_{1}...a_{p}d_{1}...d_{r}}\tilde{\delta}%
_{a_{1}}\otimes ...\otimes \tilde{\delta}_{a_{p}}\otimes d\tilde{x}%
^{b_{1}}\otimes ...\otimes d\tilde{x}^{b_{q}}\otimes \\
\dot{\tilde{\partial}}_{d_{1}}\otimes ...\otimes \dot{\tilde{\partial}}%
_{d_{r}}\otimes \delta \tilde{y}^{e_{1}}\otimes ...\otimes \delta \tilde{y}%
^{e_{s}}.%
\end{array}%
\end{equation*}
Let
$
\left( \mathcal{T}~\left( \left( \rho ,\eta \right) TE,\left( \rho ,\eta
\right) \tau _{E},E\right) ,+,\cdot ,\otimes \right)
$
be the tensor fields algebra of generalized tangent bundle $( (
\rho ,\eta ) \rho TE,( \rho ,\eta ) \rho \tau _{E},E)
$. Moreover, let $\left( E,\pi ,M\right) $ be a vector bundle endowed with a $\left( \rho
,\eta \right) $-connection $\left( \rho ,\eta \right) \Gamma $ and
\begin{equation*}
\begin{array}{l}
\left( X,T\right) ^{\ \underrightarrow{\left( \rho ,\eta \right) D}\,}%
\vspace*{1mm}\left( \rho ,\eta \right) D_{X}T,
\end{array}%
\end{equation*}%
be a covariant $\left( \rho ,\eta \right) $-derivative for the tensor
algebra
\begin{equation*}
\left( \mathcal{T}~\left( \left( \rho ,\eta \right) TE,\left( \rho ,\eta
\right) \tau _{E},E\right) ,+,\cdot ,\otimes \right),
\end{equation*}%
of the generalized tangent bundle
$
\left( \left( \rho ,\eta \right) TE,\left( \rho ,\eta \right) \tau
_{E},E\right),
$
which preserves the horizontal and vertical interior differential systems by
parallelism.
The real local functions
\begin{equation*}
\left( \left( \rho ,\eta \right) H_{bc}^{a},\left( \rho ,\eta \right) \tilde{%
H}_{bc}^{a},\left( \rho ,\eta \right) V_{bc}^{a},\left( \rho ,\eta \right)
\tilde{V}_{bc}^{a}\right),
\end{equation*}%
defined by the equalities
\begin{equation*}
\begin{array}{ll}
\left( \rho ,\eta \right) D_{\tilde{\delta}_{c}}\tilde{\delta}_{b}=\left(
\rho ,\eta \right) H_{bc}^{a}\tilde{\delta}_{a}, & \left( \rho ,\eta \right)
D_{\tilde{\delta}_{c}}\dot{\tilde{\partial}}_{b}=\left( \rho ,\eta \right)
\tilde{H}_{bc}^{a}\dot{\tilde{\partial}}_{a} \\
\left( \rho ,\eta \right) D_{\dot{\tilde{\partial}}_{c}}\tilde{\delta}%
_{b}=\left( \rho ,\eta \right) V_{bc}^{a}\tilde{\delta}_{a}, & \left( \rho
,\eta \right) D_{\dot{\tilde{\partial}}_{c}}\dot{\tilde{\partial}}%
_{b}=\left( \rho ,\eta \right) \tilde{V}_{bc}^{a}\dot{\tilde{\partial}}_{a},
\end{array}%
\end{equation*}%
are the components of a linear $\left( \rho ,\eta \right) $-connection $%
\left( \left( \rho ,\eta \right) H,\left( \rho ,\eta \right) V\right) $ for
the generalized tangent bundle $\left( \left( \rho ,\eta \right) TE,\left(
\rho ,\eta \right) \tau _{E},E\right) $ which will be called the
{\it distinguished linear $\left( \rho ,\eta \right) $-connection}.
\begin{remark}
The distinguished linear $\left( Id_{TM},Id_{M}\right) $-connection is the
classical distinguished linear connection. The components of a
distinguished linear connection $\left( H,V\right) $ will be denoted by $%
(
H_{jk}^{i},\tilde{H}_{jk}^{i},V_{jk}^{i},\tilde{V}_{jk}^{i})$.
\end{remark}
\begin{theorem}
If the generalized tangent bundle $\!(\!\left( \rho ,\eta \right)
T\!E,\!\left( \rho ,\eta \right) \tau _{E},\!E\!)$ is endowed with a
distinguished linear $\!\left( \rho ,\eta \right) $-connection $(\left( \rho
,\eta \right) H,\left( \rho ,\eta \right) V),$ then for any
\begin{equation*}
\begin{array}[b]{c}
X=X^{a}\tilde{\delta}_{a}+\dot{\tilde{X}}^{a}\dot{\tilde{\partial}}_{a}\in
\Gamma (\!\left( \rho ,\eta \right) TE,\!\left( \rho ,\eta \right)
\tau _{E},\!E),
\end{array}%
\end{equation*}%
and for any
\begin{equation*}
T\in \mathcal{T}_{qs}^{pr}\!(\!\left( \rho ,\eta \right) TE,\!\left( \rho
,\eta \right) \tau _{E},\!E),
\end{equation*}%
we have
\begin{equation}\label{good}
\begin{array}{l}
\left( \rho ,\eta \right) D_{X}\left(
T_{b_{1}...b_{q}e_{1}\cdots e_{s}}^{a_{1}\cdots a_{p}d_{1}\cdots d_{r}}\tilde{\delta}%
_{a_{1}}\otimes\cdots \otimes \tilde{\delta}_{a_{p}}\otimes d\tilde{x}%
^{b_{1}}\otimes\cdots \otimes \right. \vspace*{1mm} \\
\hspace*{9mm}\left. \otimes d\tilde{x}^{b_{q}}\otimes \dot{\tilde{\partial}}%
_{d_{1}}\otimes\cdots\otimes \dot{\tilde{\partial}}_{d_{r}}\otimes
\delta
\tilde{y}^{e_{1}}\otimes\cdots\otimes \delta \tilde{y}^{e_{s}}\right) =%
\vspace*{1mm} \\
\hspace*{9mm}=X^{c}T_{b_{1}\cdots b_{q}e_{1}\cdots e_{s}\mid
c}^{a_{1}\cdots a_{p}d_{1}\cdots d_{r}}\tilde{\delta}_{a_{1}}\otimes
\cdots\otimes
\tilde{\delta}_{a_{p}}\otimes d\tilde{x}^{b_{1}}\otimes\cdots\otimes d\tilde{x}%
^{b_{q}}\otimes \dot{\tilde{\partial}}_{d_{1}}\otimes\cdots\otimes \vspace*{1mm%
} \\
\hspace*{9mm}\otimes \dot{\tilde{\partial}}_{d_{r}}\otimes \delta \tilde{y}%
^{e_{1}}\otimes\cdots\otimes \delta \tilde{y}%
^{e_{s}}+\dot{\tilde{X}}^{c}T_{b_{1}\cdots b_{q}e_{1}\cdots e_{s}}^{a_{1}\cdots a_{p}d_{1}\cdots d_{r}}%
\mid _{c}\tilde{\delta}_{a_{1}}\otimes\cdots\otimes \vspace*{1mm}\\
\hspace*{9mm}\otimes \tilde{\delta}_{a_{p}}\otimes
d\tilde{x}^{b_{1}}\otimes\cdots\otimes d\tilde{x}^{b_{q}}\otimes
\dot{\tilde{\partial}}_{d_{1}}\otimes
\cdots\otimes \dot{\tilde{\partial}}_{d_{r}}\otimes \delta \tilde{y}%
^{e_{1}}\otimes\cdots\otimes \delta \tilde{y}^{e_{s}},%
\end{array}%
\end{equation}%
where
\begin{equation*}
\begin{array}{l}
T_{b_{1}\cdots b_{q}e_{1}\cdots e_{s}\mid c}^{a_{1}\cdots a_{p}d_{1}\cdots d_{r}}=\vspace*{%
2mm}\Gamma \left( \tilde{\rho},Id_{E}\right) ( \tilde{\delta}%
_{c}) T_{b_{1}\cdots b_{q}e_{1}\cdots e_{s}}^{a_{1}\cdots a_{p}d_{1}\cdots d_{r}} \\
\hspace*{8mm}+\left( \rho ,\eta \right)
H_{ac}^{a_{1}}T_{b_{1}\cdots b_{q}e_{1}\cdots e_{s}}^{aa_{2}\cdots a_{p}d_{1}\cdots d_{r}}+\cdots+%
\vspace*{2mm}\left( \rho ,\eta \right) H_{ac}^{a_{p}}T_{b_{1}\cdots
b_{q}e_{1}\cdots e_{s}}^{a_{1}\cdots a_{p-1}ad_{1}\cdots d_{r}}
\\
\hspace*{8mm}-\left( \rho ,\eta \right)
H_{b_{1}c}^{b}T_{bb_{2}\cdots b_{q}e_{1}\cdots e_{s}}^{a_{1}\cdots a_{p}d_{1}\cdots d_{r}}-\cdots -%
\vspace*{2mm}\left( \rho ,\eta \right) H_{b_{q}c}^{b}T_{b_{1}\cdots
b_{q-1}be_{1}\cdots e_{s}}^{a_{1}\cdots a_{p}d_{1}\cdots d_{r}}
\\
\hspace*{8mm}+\left( \rho ,\eta \right) \tilde{H}%
_{dc}^{d_{1}}T_{b_{1}\cdots b_{q}e_{1}\cdots e_{s}}^{a_{1}\cdots a_{p}dd_{2}\cdots d_{r}}+\cdots +%
\vspace*{2mm}\left( \rho ,\eta \right) \tilde{H}%
_{dc}^{d_{r}}T_{b_{1}\cdots b_{q}e_{1}\cdots e_{s}}^{a_{1}\cdots
a_{p}d_{1}\cdots
d_{r-1}d}
\\
\hspace*{8mm}-\left( \rho ,\eta \right) \tilde{H}%
_{e_{1}c}^{e}T_{b_{1}\cdots b_{q}ee_{2}ve_{s}}^{a_{1}\cdots a_{p}d_{1}\cdots d_{r}}-%
\vspace*{2mm}\cdots -\left( \rho ,\eta \right) \tilde{H}%
_{e_{s}c}^{e}T_{b_{1}\cdots b_{q}e_{1}\cdots e_{s-1}e}^{a_{1}\cdots
a_{p}d_{1}\cdots d_{r}},
\end{array}%
\end{equation*}%
and
\begin{equation*}
\begin{array}{l}
T_{b_{1}\cdots b_{q}e_{1}\cdots e_{s}}^{a_{1}\cdots a_{p}d_{1}\cdots d_{r}}\mid _{c}=%
\vspace*{2mm}\Gamma \left( \tilde{\rho},Id_{E}\right) ( \dot{\tilde{%
\partial}}_{c})
T_{b_{1}\cdots b_{q}e_{1}\cdots e_{s}}^{a_{1}\cdots a_{p}d_{1}\cdots d_{r}} \\
\hspace*{8mm}+\left( \rho ,\eta \right) V_{ac}^{a_{1}}T_{b_{1}\cdots
b_{q}e_{1}\cdots e_{s}}^{aa_{2}\cdots a_{p}d_{1}\cdots d_{r}}+\cdots
+\left( \rho ,\eta \right) \rho V_{ac}^{a_{p}}T_{b_{1}\cdots
b_{q}e_{1}\cdots e_{s}}^{a_{1}\cdots a_{p-1}ad_{1}\cdots d_{r}}
\\
\hspace*{8mm}-\left( \rho ,\eta \right)
V_{b_{1}c}^{b}T_{bb_{2}\cdots b_{q}e_{1}\cdots e_{s}}^{a_{1}\cdots a_{p}d_{1}\cdots d_{r}}-\cdots -%
\vspace*{2mm}\left( \rho ,\eta \right) V_{b_{q}c}^{b}T_{b_{1}\cdots
b_{q-1}be_{1}\cdots e_{s}}^{a_{1}\cdots a_{p}d_{1}\cdots d_{r}}
\\
\hspace*{8mm}+\left( \rho ,\eta \right) \tilde{V}%
_{dc}^{d_{1}}T_{b_{1}\cdots b_{q}e_{1}\cdots e_{s}}^{a_{1}\cdots a_{p}dd_{2}\cdots d_{r}}+\cdots +%
\vspace*{2mm}\left( \rho ,\eta \right) \tilde{V}%
_{dc}^{d_{r}}T_{b_{1}\cdots b_{q}e_{1}\cdots e_{s}}^{a_{1}\cdots
a_{p}d_{1}\cdots d_{r-1}d}
\\
\hspace*{8mm}-\left( \rho ,\eta \right) \tilde{V}%
_{e_{1}c}^{e}T_{b_{1}\cdots b_{q}ee_{2}\cdots e_{s}}^{a_{1}\cdots a_{p}d_{1}\cdots d_{r}}-%
\vspace*{2mm}\cdots -\left( \rho ,\eta \right) \tilde{V}%
_{e_{s}c}^{e}T_{b_{1}\cdots b_{q}e_{1}\cdots e_{s-1}e}^{a_{1}\cdots
a_{p}d_{1}\cdots d_{r}}.
\end{array}%
\end{equation*}
\end{theorem}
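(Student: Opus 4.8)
The plan is to deduce the coordinate formula \eqref{good} from the defining properties of the covariant $(\rho,\eta)$-derivative $(\rho,\eta)D$ on the tensor algebra, namely that it is $\mathbb{R}$-bilinear, is $\mathcal{F}(E)$-linear in its direction argument, acts on functions through the anchor $\Gamma(\tilde{\rho},Id_{E})$, obeys the Leibniz rule with respect to $\otimes$, and commutes with contractions. These are exactly the structural features encoded in the requirement that $(\rho,\eta)D$ preserve the horizontal and vertical interior differential systems by parallelism, together with the definition of the components $\big((\rho,\eta)H_{bc}^{a},(\rho,\eta)\tilde{H}_{bc}^{a},(\rho,\eta)V_{bc}^{a},(\rho,\eta)\tilde{V}_{bc}^{a}\big)$.

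First I would use $\mathcal{F}(E)$-linearity in the direction to split $X=X^{c}\tilde{\delta}_{c}+\dot{\tilde{X}}^{c}\dot{\tilde{\partial}}_{c}$ and reduce the whole computation to the two elementary operators $(\rho,\eta)D_{\tilde{\delta}_{c}}$ and $(\rho,\eta)D_{\dot{\tilde{\partial}}_{c}}$. The term carrying $X^{c}$ will produce the horizontal derivative $T_{\cdots\mid c}$ and the term carrying $\dot{\tilde{X}}^{c}$ the vertical derivative $T_{\cdots}\mid_{c}$, so it suffices to establish the two coordinate expressions separately and then recombine them.

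The key preliminary step is to compute the action of $(\rho,\eta)D$ on the adapted dual $(\rho,\eta)$-base $(d\tilde{x}^{a},\delta\tilde{y}^{a})$. Differentiating the constant pairings $\langle d\tilde{x}^{a},\tilde{\delta}_{b}\rangle=\delta_{b}^{a}$, $\langle \delta\tilde{y}^{a},\dot{\tilde{\partial}}_{b}\rangle=\delta_{b}^{a}$, $\langle d\tilde{x}^{a},\dot{\tilde{\partial}}_{b}\rangle=0$ and $\langle \delta\tilde{y}^{a},\tilde{\delta}_{b}\rangle=0$, and using that $(\rho,\eta)D$ commutes with contraction and obeys Leibniz, I obtain
\[
(\rho,\eta)D_{\tilde{\delta}_{c}}d\tilde{x}^{a}=-(\rho,\eta)H_{bc}^{a}\,d\tilde{x}^{b},\qquad (\rho,\eta)D_{\tilde{\delta}_{c}}\delta\tilde{y}^{a}=-(\rho,\eta)\tilde{H}_{bc}^{a}\,\delta\tilde{y}^{b},
\]
and likewise $(\rho,\eta)D_{\dot{\tilde{\partial}}_{c}}d\tilde{x}^{a}=-(\rho,\eta)V_{bc}^{a}\,d\tilde{x}^{b}$ and $(\rho,\eta)D_{\dot{\tilde{\partial}}_{c}}\delta\tilde{y}^{a}=-(\rho,\eta)\tilde{V}_{bc}^{a}\,\delta\tilde{y}^{b}$. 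The vanishing of the two mixed pairings is precisely what guarantees that no $\tilde{H}$-type term contaminates the derivative of a $d\tilde{x}$-factor, so that the horizontal and vertical contributions decouple exactly as in the statement.

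Finally I would write $T$ in the adapted tensor base and apply the Leibniz rule slot by slot: the anchor $\Gamma(\tilde{\rho},Id_{E})(\tilde{\delta}_{c})$ differentiates the component function, each upper $\tilde{\delta}$-slot contributes a $+(\rho,\eta)H$ term, each lower $d\tilde{x}$-slot a $-(\rho,\eta)H$ term, each upper $\dot{\tilde{\partial}}$-slot a $+(\rho,\eta)\tilde{H}$ term, and each lower $\delta\tilde{y}$-slot a $-(\rho,\eta)\tilde{H}$ term; relabelling the summation indices yields precisely the expression for $T_{\cdots\mid c}$. The identical argument in the $\dot{\tilde{\partial}}_{c}$ direction, now feeding in the $V$- and $\tilde{V}$-components, gives $T_{\cdots}\mid_{c}$. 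I do not expect any genuine obstacle here: the result is structurally forced once the dual-base derivatives are in hand, and the only real work is the index bookkeeping over the $(p,r,q,s)$ families, which is routine but must be carried out carefully to match the placement of the signs and of the displaced indices in \eqref{good}.
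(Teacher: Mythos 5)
Your proposal is correct. Note, however, that the paper itself gives no proof of this theorem --- it is stated as a bare fact (the computation being deferred to the author's earlier work \cite{2,3}) --- so your argument fills an omission rather than parallels an existing proof. The route you take is the standard and the right one: $\mathcal{F}(E)$-linearity of $(\rho,\eta)D$ in the direction argument reduces everything to the operators $(\rho,\eta)D_{\tilde{\delta}_{c}}$ and $(\rho,\eta)D_{\dot{\tilde{\partial}}_{c}}$, producing the $X^{c}T_{\cdots\mid c}$ and $\dot{\tilde{X}}^{c}T_{\cdots}\mid_{c}$ terms; differentiating the four duality pairings and using commutation with contraction gives
\begin{equation*}
(\rho,\eta)D_{\tilde{\delta}_{c}}d\tilde{x}^{a}=-(\rho,\eta)H_{bc}^{a}\,d\tilde{x}^{b},\qquad
(\rho,\eta)D_{\tilde{\delta}_{c}}\delta\tilde{y}^{a}=-(\rho,\eta)\tilde{H}_{bc}^{a}\,\delta\tilde{y}^{b},
\end{equation*}
and their vertical analogues with $V$, $\tilde{V}$, where your observation that the mixed pairings $\langle d\tilde{x}^{a},\dot{\tilde{\partial}}_{b}\rangle=0$ and $\langle\delta\tilde{y}^{a},\tilde{\delta}_{b}\rangle=0$ force the derivative of each dual-base element to stay in its own span is exactly the point at which the hypothesis that $(\rho,\eta)D$ preserves the horizontal and vertical subbundles by parallelism (equivalently, the form of the defining equalities $(\rho,\eta)D_{\tilde{\delta}_{c}}\tilde{\delta}_{b}=(\rho,\eta)H_{bc}^{a}\tilde{\delta}_{a}$, etc.) enters; the slot-by-slot Leibniz rule with index relabelling then yields the displayed expressions with the correct signs (plus on contravariant slots, minus on covariant ones). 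Your derivation in fact reproduces the intended formulas more cleanly than the paper's statement, which contains evident misprints (the stray $\rho$ in the term $\left(\rho,\eta\right)\rho V_{ac}^{a_{p}}$ and the garbled index string $ee_{2}ve_{s}$); these are typographical and do not affect the validity of your argument.
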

\begin{defn}
If $\left( \rho ,\eta \right) \Gamma $ is a $\left( \rho ,\eta \right) $%
-connection for the vector bundle $\left( E,\pi ,M\right) $ and
\begin{equation*}
( \left( \rho ,\eta \right) H_{bc}^{a},\left( \rho ,\eta \right) \tilde{%
H}_{bc}^{a},\left( \rho ,\eta \right) V_{bc}^{a},\left( \rho ,\eta \right)
\tilde{V}_{bc}^{a}),
\end{equation*}%
are the components of a distinguished linear $\left( \rho ,\eta \right) $%
\textit{-}connection for the generalized tangent bundle $\left( \left( \rho
,\eta \right) TE,\left( \rho ,\eta \right) \tau _{E},E\right) $ such that
\begin{equation*}
\left( \rho ,\eta \right) H_{bc}^{a}=\left( \rho ,\eta \right) \tilde{H}%
_{bc}^{a}\mbox{ and }\left( \rho ,\eta \right) V_{bc}^{a}=\left( \rho ,\eta
\right) \tilde{V}_{bc}^{a},
\end{equation*}%
then we will say that the generalized tangent bundle $\!(\!\left( \rho ,\eta
\right) TE,\left( \rho ,\eta \right) \tau _{E},\!E)$ is endowed with a
normal distinguished linear $\left( \rho ,\eta \right) $-connection $\left(
\left( \rho ,\eta \right) H,\left( \rho ,\eta \right) V\right) $ with
components $\left( \left( \rho ,\eta \right) H_{bc}^{a},\left( \rho ,\eta
\right) V_{bc}^{a}\right) $.
\end{defn}
\begin{example}
The local real functions%
\begin{equation*}
\begin{array}[b]{c}
( \frac{\partial \left( \rho ,\eta \right) \Gamma
_{c}^{a}}{\partial y^{b}},\frac{\partial \left( \rho ,\eta \right)
\Gamma _{c}^{a}}{\partial
y^{b}},0,0),
\end{array}%
\end{equation*}%
are the components of a normal distinguished linear $\rho
$\textit{-}connection for the generalized tangent bundle $\left(
\left( \rho ,\eta \right) TE,\left(
\rho ,\eta \right) \tau _{E},E\right) ,$ which will be called the \emph{%
Berwald linear }$\left( \rho ,\eta \right) $\emph{-connection.}
\end{example}

It is remarkable that the Berwald linear $\left( Id_{TM},Id_{M}\right) $-connection is the
usual \emph{Ber\-wald linear connection.}
%&&&&&&&&&&&&&&&&&&&&&&&&&&&&&&&&&&&&&&&&&&&&&&&&&&&&&&&&&&&&&&&&&&&&&&&&
\subsection{The $(\protect\rho ,\protect\eta )$-torsion and the $(\protect%
\rho ,\protect\eta )$-curvature of a distinguished linear $(\protect\rho ,%
\protect\eta )$-connection}
%&&&&&&&&&&&&&&&&&&&&&&&&&&&&&&&&&&&&&&&&&&&&&&&&&&&&&&&&&&&&&&&&&&&&&&&&&
We consider the generalized Lie algebroid $(\left( E,\pi ,M\right) ,\left[ ,\right] _{E,h},\left( \rho
,\eta \right))$ and we let $\left( \rho ,\eta \right) \Gamma $ be a $\left( \rho ,\eta \right) $%
-connection for the vector bundle $\left( E,\pi ,M\right) $ and $\left(
\left( \rho ,\eta \right) H,\left( \rho ,\eta \right) V\right) $ be a
distinguished linear $\left( \rho ,\eta \right) $-connection for the
generalized tangent bundle $( ( \rho ,\eta ) TE$ $,( \rho
,\eta ) \tau _{E},E)$.
\begin{defn}
Application
\begin{equation*}
\begin{array}{rcl}
\Gamma \left( \left( \rho ,\eta \right) TE,\left( \rho ,\eta \right) \tau
_{E},E\right) ^{2} & ^{\underrightarrow{\left( \rho ,\eta ,h\right) \mathbb{T%
}}} & \Gamma \left( \left( \rho ,\eta \right) TE,\left( \rho ,\eta \right)
\tau _{E},E\right) \vspace*{2mm} \\
\left( X,Y\right) & \longmapsto & \left( \rho ,\eta ,h\right) \mathbb{T}%
\left( X,Y\right),%
\end{array}%
\end{equation*}%
defined by
\begin{equation}
\begin{array}{c}
\left( \rho ,\eta ,h\right) \mathbb{T}\left( X,Y\right) =\left( \rho ,\eta
\right) D_{X}Y-\left( \rho ,\eta \right) D_{Y}X-\left[ X,Y\right] _{\left(
\rho ,\eta \right) TE},%
\end{array}%
\end{equation}%
for any $X,Y\in \Gamma \left( \left( \rho ,\eta \right) TE,\left( \rho ,\eta
\right) \tau _{E},E\right) ,$ is called the $\left( \rho ,\eta ,h\right) $%
\textit{-torsion associated to the distinguished linear }$\left( \rho ,\eta
\right) $\textit{-connection }$\left( \left( \rho ,\eta \right) H,\left(
\rho ,\eta \right) V\right) .$

Applications
\begin{equation*}
\mathcal{H}\left( \rho ,\eta ,h\right) \mathbb{T}\left( \mathcal{H}\left(
\cdot \right) ,\mathcal{H}\left( \cdot \right) \right) ,\,\mathcal{V}\left(
\rho ,\eta ,h\right) \mathbb{T}\left( \mathcal{H}\left( \cdot \right) ,%
\mathcal{H}\left( \cdot \right) \right) ,...,\mathcal{V}\left( \rho ,\eta
,h\right) \mathbb{T}\left( \mathcal{V}\left( \cdot \right) ,\mathcal{V}%
\left( \cdot \right) \right),
\end{equation*}%
are called $\mathcal{H}\left( \mathcal{HH}\right) ,\,\mathcal{V}\left(
\mathcal{HH}\right) ,...,\mathcal{V}\left( \mathcal{VV}\right) $ $\left(
\rho ,\eta ,h\right) $-torsions associated to the distinguished linear $\left(
\rho ,\eta \right) $-connection $\left( \left( \rho ,\eta \right) H,\left(
\rho ,\eta \right) V\right) .$
\end{defn}
Using the notations
\begin{equation}
\begin{array}{l}
\mathcal{H}\left( \rho ,\eta ,h\right) \mathbb{T}( \tilde{\delta}_{c},%
\tilde{\delta}_{b}) =\left( \rho ,\eta ,h\right) \mathbb{T}_{~bc}^{a}%
\tilde{\delta}_{a}, \\
\mathcal{V}\left( \rho ,\eta ,h\right) \mathbb{T}( \tilde{\delta}%
_{\gamma },\tilde{\delta}_{\beta }) =\left( \rho ,\eta ,h\right)
\mathbb{\tilde{T}}_{~bc}^{a}\dot{\tilde{\partial}}_{a}, \\
\mathcal{H}\left( \rho ,\eta ,h\right) \mathbb{T}( \dot{\tilde{\partial}%
}_{c},\tilde{\delta}_{b}) =\left( \rho ,\eta ,h\right) \mathbb{P}%
_{~bc}^{a}\tilde{\delta}_{a}, \\
\mathcal{V}\left( \rho ,\eta ,h\right) \mathbb{T}( \dot{\tilde{\partial}%
}_{c},\tilde{\delta}_{b}) =\left( \rho ,\eta ,h\right) \mathbb{\tilde{P%
}}_{~bc}^{a}\dot{\tilde{\partial}}_{a}, \\
\ \mathcal{V}\left( \rho ,\eta ,h\right) \mathbb{T}( \dot{\tilde{%
\partial}}_{c},\dot{\tilde{\partial}}_{b}) =\left( \rho ,\eta
,h\right) \mathbb{S}_{~bc}^{a}\dot{\tilde{\partial}}_{a},%
\end{array}%
\end{equation}%
one can deduce that the $\left( \rho ,\eta ,h\right) $-torsion
$\left( \rho ,\eta ,h\right) \mathbb{T}$ associated to the
distinguished linear $\left( \rho ,\eta \right) $ -connection
$\left( \left( \rho ,\eta \right) H,\left( \rho ,\eta \right)
V\right) $ is characterized by the tensor fields with the local
components
\begin{equation}
\begin{array}{cl}
\left( \rho ,\eta ,h\right) \mathbb{T}_{~bc}^{a} & =\left( \rho ,\eta
\right) H_{bc}^{a}-\left( \rho ,\eta \right) H_{cb}^{a}-L_{bc}^{a}\circ
h\circ \pi , \\
\left( \rho ,\eta ,h\right) \mathbb{\tilde{T}}_{~bc}^{a} & =\left( \rho
,\eta ,h\right) \mathbb{R}_{\,\ bc}^{a}, \\
\left( \rho ,\eta ,h\right) \mathbb{P}_{~bc}^{a} & =\left( \rho ,\eta
\right) V_{bc}^{a}, \\
\left( \rho ,\eta ,h\right) \mathbb{\tilde{P}}_{~\beta c}^{a} & =%
\displaystyle\frac{\partial }{\partial y^{c}}\left( \left( \rho ,\eta
\right) \Gamma _{b}^{a}\right) -\left( \rho ,\eta \right) \tilde{H}_{cb}^{a},
\\
\left( \rho ,\eta ,h\right) \mathbb{S}_{~bc}^{a} & =\left( \rho ,\eta
\right) \tilde{V}_{bc}^{a}-\left( \rho ,\eta \right) \tilde{V}_{cb}^{a}.%
\end{array}%
\end{equation}
\begin{defn}
The application
\begin{equation*}
\begin{array}{rcc}
\left( \Gamma \left( \left( \rho ,\eta \right) TE,\left( \rho ,\eta \right)
\tau _{E},E\right) \right) ^{3} & ^{\underrightarrow{\ \left( \rho ,\eta
,h\right) \mathbb{R}\ }} & \Gamma \left( \left( \rho ,\eta \right) TE,\left(
\rho ,\eta \right) \tau _{E},E\right) \vspace*{3mm} \\
\left( \left( Y,Z\right) ,X\right) & \longmapsto & \left( \rho ,\eta
,h\right) \mathbb{R}\left( Y,Z\right)X,%
\end{array}%
\end{equation*}%
by the rule
\begin{equation}
\begin{array}{l}
\left( \rho ,\eta ,h\right) \mathbb{R}\left( Y,Z\right) X=\left( \rho ,\eta
\right) D_{Y}\left( \left( \rho ,\eta \right) D_{Z}X\right) \vspace*{2mm} \\
\qquad -\left( \rho ,\eta \right) D_{Z}\left( \left( \rho ,\eta \right)
D_{Y}X\right) -\left( \rho ,\eta \right) D_{\left[ Y,Z\right] _{\left( \rho
,\eta \right) TE}}X,\,%
\end{array}%
\end{equation}%
for any $X,Y,Z\in \Gamma \left( \left( \rho ,\eta \right) TE,\left( \rho
,\eta \right) \tau _{E},E\right)$, is called the $\left( \rho ,\eta
,h\right) $-curvature associated to the distinguished linear $\left(
\rho ,\eta \right) $\emph{-connection }$\left( \left( \rho ,\eta \right)
H,\left( \rho ,\eta \right) V\right) .$
\end{defn}
Using the notations
\begin{equation}
\begin{array}{rl}
\left( \rho ,\eta ,h\right) \mathbb{R}( \tilde{\delta}_{d},\tilde{\delta%
}_{c}) \tilde{\delta}_{b} & =\left( \rho ,\eta ,h\right) \mathbb{R}%
_{~b~cd}^{a}\tilde{\delta}_{a}, \\
\left( \rho ,\eta ,h\right) \mathbb{R}( \tilde{\delta}_{d},\tilde{\delta%
}_{c}) \dot{\tilde{\partial}}_{b} & =\left( \rho ,\eta ,h\right)
\mathbb{\tilde{R}}_{~b~cd}^{a}\dot{\tilde{\partial}}_{a}, \\
\left( \rho ,\eta ,h\right) \mathbb{R}( \dot{\tilde{\partial}}_{d},%
\tilde{\delta}_{c}) \tilde{\delta}_{b} & =\left( \rho ,\eta ,h\right)
\mathbb{P}_{~b~cd}^{a}\tilde{\delta}_{a}, \\
\left( \rho ,\eta ,h\right) \mathbb{R}( \dot{\tilde{\partial}}_{d},%
\tilde{\delta}_{c}) \dot{\tilde{\partial}}_{b} & =\left( \rho ,\eta
,h\right) \mathbb{\tilde{P}}_{~b~cd}^{a}\dot{\tilde{\partial}}_{a}, \\
\left( \rho ,\eta ,h\right) \mathbb{R}( \dot{\tilde{\partial}}_{d},\dot{%
\tilde{\partial}}_{c}) \tilde{\delta}_{b} & =\left( \rho ,\eta
,h\right) \mathbb{S}_{~b~cd}^{a}\tilde{\delta}_{a}, \\
\left( \rho ,\eta ,h\right) \mathbb{R}( \dot{\tilde{\partial}}_{d},\dot{%
\tilde{\partial}}_{c}) \dot{\tilde{\partial}}_{b} & =\left( \rho ,\eta
,h\right) \mathbb{\tilde{S}}_{~b~cd}^{a}\dot{\tilde{\partial}}_{a},%
\end{array}%
\end{equation}%
the following will be yield easily.

\begin{theorem}
The $\left( \rho ,\eta ,h\right) $-curvature $\left( \rho ,\eta ,h\right)
\mathbb{R}$ associated to the distinguished linear $\left( \rho ,\eta \right) $
-connection $\left( \left( \rho ,\eta \right) H,\left( \rho ,\eta \right)
V\right) $ is characterized by the\ tensor fields with local components
\begin{equation}  \label{reclaim1}
\left\{
\begin{array}{cl}
(\rho ,\eta ,h)\mathbb{R}_{~b~cd}^{a} & {=}\Gamma (\tilde{\rho}%
,Id_{E})\!( \!\tilde{\delta}_{d}\!) \!(\rho ,\eta )H_{bc}^{a}{-}%
\Gamma \!(\tilde{\rho},Id_{E})\!( \!\tilde{\delta}_{c}\!) (\rho
,\eta )H_{bd}^{a}\vspace*{2mm} \\
& +(\rho ,\eta )H_{ed}^{a}(\rho ,\eta )H_{bc}^{e}{-}(\rho ,\eta
)H_{ec}^{a}(\rho ,\eta )H_{bd}^{e}\vspace*{2mm} \\
& -(\rho ,\eta ,h)\mathbb{R}_{\,\ cd}^{e}(\rho ,\eta )H_{be}^{a}{-}%
L_{cd}^{e}\circ h\circ \pi (\rho ,\eta )H_{\beta e}^{a},\vspace*{3mm} \\
(\rho ,\eta ,h)\mathbb{\tilde{R}}_{~b~cd}^{a} & {=}\Gamma (\tilde{\rho}%
,Id_{E})\!( \!\tilde{\delta}_{d}\!) \!(\rho ,\eta )\tilde{H}%
_{bc}^{a}{-}\Gamma (\tilde{\rho},Id_{E})\!( \!\tilde{\delta}%
_{c}\!) (\rho ,\eta )\tilde{H}_{bd}^{a}\vspace*{2mm} \\
& +(\rho ,\eta )\tilde{H}_{ed}^{a}(\rho ,\eta )\tilde{H}_{bc}^{e}{-}(\rho
,\eta )\tilde{H}_{ec}^{a}(\rho ,\eta )\tilde{H}_{bd}^{e}\vspace*{2mm} \\
& -(\rho ,\eta ,h)\mathbb{R}_{\,\ dc}^{e}(\rho ,\eta )\tilde{V}_{be}^{a}{-}%
L_{cd}^{e}\circ h\circ \pi (\rho ,\eta )\tilde{V}_{be}^{a},%
\end{array}%
\right.
\end{equation}
\begin{equation}  \label{reclaim2}
\left\{
\begin{array}{ll}
(\rho ,\eta ,h)\mathbb{P}_{~b~cd}^{a}\!\!\! & =\Gamma (\tilde{\rho},Id_{E})(%
\dot{\tilde{\partial}}_{d})(\rho ,\eta )H_{bc}^{a}{-}\vspace*{1mm}\Gamma (%
\tilde{\rho},Id_{E})\!( \!\tilde{\delta}_{c}\!) \!(\rho ,\eta
)V_{bd}^{a}\vspace*{2mm} \\
& {=}(\rho ,\eta )V_{ed}^{a}(\rho ,\eta )H_{bc}^{e}-\vspace*{1mm}(\rho ,\eta
)H_{ec}^{a}(\rho ,\eta )V_{bd}^{e}\vspace*{2mm} \\
& +\displaystyle\frac{\partial }{\partial y^{c}}\left( \left( \rho ,\eta
\right) \Gamma _{c}^{e}\right) \left( \rho ,\eta \right) V_{be}^{a},\vspace*{%
2mm} \\
(\rho ,\eta ,h)\mathbb{\tilde{P}}_{~b~cd}^{a}\!\!\! & =\Gamma (\tilde{\rho}%
,Id_{E})\!( \!\dot{\tilde{\partial}}_{d}\!) \!(\rho ,\eta )\tilde{%
H}_{bc}^{a}-\vspace*{2mm} \\
& -\Gamma (\tilde{\rho},Id_{E})\!( \!\tilde{\delta}_{c}\!)
\!(\rho ,\eta )\tilde{V}_{bd}^{a}+(\rho ,\eta )\tilde{V}_{ed}^{a}(\rho ,\eta
)\tilde{H}_{bc}^{e}-\vspace*{2mm} \\
& -(\rho ,\eta )\tilde{H}_{ec}^{a}(\rho ,\eta )\tilde{V}_{bd}^{e}+%
\displaystyle\frac{\partial }{\partial y^{d}}(\left( \rho ,\eta )\Gamma
_{c}^{e}\right) (\rho ,\eta )\tilde{V}_{be}^{a},%
\end{array}%
\right.
\end{equation}%
\begin{equation}  \label{reclaim3}
\left\{
\begin{array}{cl}
\left( \rho ,\eta ,h\right) \mathbb{S}_{~b~cd}^{a} & =\Gamma \left( \tilde{%
\rho},Id_{E}\right) ( \dot{\tilde{\partial}}_{d}) \left( \rho
,\eta \right) V_{bc}^{a}\vspace*{2mm} \\
& -\Gamma \left( \tilde{\rho},Id_{E}\right) ( \dot{\tilde{\partial}}%
_{c}) \left( \rho ,\eta \right) V_{bd}^{a}+\left( \rho ,\eta \right)
V_{ed}^{a}\left( \rho ,\eta \right) V_{bc}^{e}\vspace*{2mm} \\
& -\left( \rho ,\eta \right) V_{ec}^{e}\left( \rho ,\eta \right) V_{bd}^{e},%
\vspace*{3mm} \\
\left( \rho ,\eta ,h\right) \mathbb{\tilde{S}}_{~b~cd}^{a} & =\Gamma \left(
\tilde{\rho},Id_{E}\right)( \dot{\tilde{\partial}}_{d}) \left(
\rho ,\eta \right) \tilde{V}_{bc}^{a}\vspace*{2mm} \\
& -\Gamma \left( \tilde{\rho},Id_{E}\right) ( \dot{\tilde{\partial}}%
_{c}) \left( \rho ,\eta \right) \tilde{V}_{bd}^{a}+\left( \rho ,\eta
\right) \tilde{V}_{ed}^{a}\left( \rho ,\eta \right) \tilde{V}_{bc}^{e}%
\vspace*{2mm} \\
& -\left( \rho ,\eta \right) \tilde{V}_{ec}^{a}\left( \rho ,\eta \right)
\tilde{V}_{bd}^{e}.%
\end{array}%
\right.
\end{equation}%
\end{theorem}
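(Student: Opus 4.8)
The plan is to establish all six identities by directly evaluating the defining expression $(\rho,\eta,h)\mathbb{R}(Y,Z)X=(\rho,\eta)D_{Y}((\rho,\eta)D_{Z}X)-(\rho,\eta)D_{Z}((\rho,\eta)D_{Y}X)-(\rho,\eta)D_{[Y,Z]_{(\rho,\eta)TE}}X$ on the relevant triples of adapted $(\rho,\eta)$-base sections, reading off the coefficient of $\tilde{\delta}_{a}$ (respectively $\dot{\tilde{\partial}}_{a}$), and comparing with the notations fixed just before the statement.

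Before splitting into cases I would record the two ingredients that drive the whole computation. First, since $(\rho,\eta)D$ is a covariant $(\rho,\eta)$-derivative it satisfies the Leibniz rule $(\rho,\eta)D_{X}(fY)=\Gamma(\tilde{\rho},Id_{E})(X)(f)\cdot Y+f\,(\rho,\eta)D_{X}Y$; together with the four structure equations $(\rho,\eta)D_{\tilde{\delta}_{c}}\tilde{\delta}_{b}=(\rho,\eta)H_{bc}^{a}\tilde{\delta}_{a}$, $(\rho,\eta)D_{\tilde{\delta}_{c}}\dot{\tilde{\partial}}_{b}=(\rho,\eta)\tilde{H}_{bc}^{a}\dot{\tilde{\partial}}_{a}$, $(\rho,\eta)D_{\dot{\tilde{\partial}}_{c}}\tilde{\delta}_{b}=(\rho,\eta)V_{bc}^{a}\tilde{\delta}_{a}$ and $(\rho,\eta)D_{\dot{\tilde{\partial}}_{c}}\dot{\tilde{\partial}}_{b}=(\rho,\eta)\tilde{V}_{bc}^{a}\dot{\tilde{\partial}}_{a}$, this lets me differentiate any product of a component function with a base section. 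Second, I would use the bracket relations of the adapted $(\rho,\eta)$-base established earlier, namely $[\tilde{\delta}_{d},\tilde{\delta}_{c}]_{(\rho,\eta)TE}=L_{dc}^{e}\circ h\circ\pi\cdot\tilde{\delta}_{e}+(\rho,\eta,h)\mathbb{R}_{dc}^{e}\dot{\tilde{\partial}}_{e}$, $[\tilde{\delta}_{c},\dot{\tilde{\partial}}_{d}]_{(\rho,\eta)TE}=\Gamma(\tilde{\rho},Id_{E})(\dot{\tilde{\partial}}_{d})((\rho,\eta)\Gamma_{c}^{e})\dot{\tilde{\partial}}_{e}$ and $[\dot{\tilde{\partial}}_{d},\dot{\tilde{\partial}}_{c}]_{(\rho,\eta)TE}=0$, noting that $\Gamma(\tilde{\rho},Id_{E})(\dot{\tilde{\partial}}_{d})$ acts as $\partial/\partial y^{d}$.

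I would then run through the six cases. For the two purely horizontal curvatures $(\rho,\eta,h)\mathbb{R}(\tilde{\delta}_{d},\tilde{\delta}_{c})\tilde{\delta}_{b}$ and $(\rho,\eta,h)\mathbb{R}(\tilde{\delta}_{d},\tilde{\delta}_{c})\dot{\tilde{\partial}}_{b}$, the first two terms of the definition produce, via Leibniz, the $\Gamma(\tilde{\rho},Id_{E})(\tilde{\delta})H$ (respectively $\Gamma(\tilde{\rho},Id_{E})(\tilde{\delta})\tilde{H}$) derivative terms together with the quadratic $H\cdot H$ (respectively $\tilde{H}\cdot\tilde{H}$) terms; the bracket term $-(\rho,\eta)D_{[\tilde{\delta}_{d},\tilde{\delta}_{c}]}X$ then contributes its horizontal piece through $(\rho,\eta)D_{\tilde{\delta}_{e}}$ and its vertical piece through $(\rho,\eta)D_{\dot{\tilde{\partial}}_{e}}$, producing the remaining $L$- and $(\rho,\eta,h)\mathbb{R}_{dc}^{e}$-terms. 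For the two mixed curvatures $(\rho,\eta,h)\mathbb{R}(\dot{\tilde{\partial}}_{d},\tilde{\delta}_{c})\tilde{\delta}_{b}$ and $(\rho,\eta,h)\mathbb{R}(\dot{\tilde{\partial}}_{d},\tilde{\delta}_{c})\dot{\tilde{\partial}}_{b}$, the only surviving piece of the mixed bracket is the vertical term $\frac{\partial}{\partial y^{d}}((\rho,\eta)\Gamma_{c}^{e})\dot{\tilde{\partial}}_{e}$, which after one application of $(\rho,\eta)D_{\dot{\tilde{\partial}}_{e}}$ yields exactly the $\frac{\partial}{\partial y}((\rho,\eta)\Gamma)\cdot V$ (respectively $\cdot\tilde{V}$) terms, the rest again coming from the two iterated derivatives. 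Finally, for the two vertical curvatures $(\rho,\eta,h)\mathbb{R}(\dot{\tilde{\partial}}_{d},\dot{\tilde{\partial}}_{c})\tilde{\delta}_{b}$ and $(\rho,\eta,h)\mathbb{R}(\dot{\tilde{\partial}}_{d},\dot{\tilde{\partial}}_{c})\dot{\tilde{\partial}}_{b}$, the bracket vanishes, so only the iterated vertical derivatives survive and immediately give the $\Gamma(\tilde{\rho},Id_{E})(\dot{\tilde{\partial}})V$ and $V\cdot V$ (respectively $\tilde{V}$) terms.

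Since each step is an application of already-established rules, there is no genuine conceptual obstacle; the work is purely mechanical. The part demanding the most care will be the sign and index bookkeeping in the two horizontal cases, where $[\tilde{\delta}_{d},\tilde{\delta}_{c}]_{(\rho,\eta)TE}$ carries both a horizontal and a vertical component, so that $-(\rho,\eta)D_{[\tilde{\delta}_{d},\tilde{\delta}_{c}]}X$ splits into two contributions; matching these with the displayed formulas requires using the antisymmetries $L_{dc}^{e}=-L_{cd}^{e}$ and $(\rho,\eta,h)\mathbb{R}_{dc}^{e}=-(\rho,\eta,h)\mathbb{R}_{cd}^{e}$ and carefully relabelling the summation index $e$.
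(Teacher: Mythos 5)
Your proposal is correct and takes essentially the same approach as the paper, which in fact offers no written proof at all: it merely fixes the notations and asserts that the components ``will be yield easily'', i.e.\ by exactly the direct evaluation on the adapted $(\rho,\eta)$-base (Leibniz rule, the four structure equations of the distinguished connection, tensoriality of $(\rho,\eta)D$ in its lower argument, and the bracket relations) that you describe. One caveat so that you are not misled when matching your result against the printed statement: a faithful execution of your computation gives the bracket contribution $-\left(L_{dc}^{e}\circ h\circ \pi\right)\left(\rho ,\eta \right)H_{be}^{a}-\left(\rho ,\eta ,h\right)\mathbb{R}_{\ dc}^{e}\left(\rho ,\eta \right)V_{be}^{a}$ in the first formula (the vertical part of $[\tilde{\delta}_{d},\tilde{\delta}_{c}]_{\left(\rho ,\eta \right)TE}$ must be contracted with the $V$-coefficients, not the $H$-coefficients) and $-\left(L_{dc}^{e}\circ h\circ \pi\right)\left(\rho ,\eta \right)\tilde{H}_{be}^{a}-\left(\rho ,\eta ,h\right)\mathbb{R}_{\ dc}^{e}\left(\rho ,\eta \right)\tilde{V}_{be}^{a}$ in the second, so the occurrences of $H_{be}^{a}$ (resp.\ $\tilde{V}_{be}^{a}$) in those positions of the displayed theorem are typographical errors of the paper, not discrepancies that your antisymmetry and index-relabelling step can, or should, reproduce.
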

\begin{defn}
Using the mixed components of $\left( \rho ,\eta ,h\right)
$-curvature, we define the mixed curvature
\[
\begin{array}{ccc}
\Gamma \left( \left( \rho ,\eta \right) TE,\left( \rho ,\eta \right)
\tau _{E},E\right) ^{3} & \underrightarrow{~\ \ \ \mathbb{P~\ \ }} &
\left( \left( \rho ,\eta \right) TE,\left( \rho ,\eta \right) \tau
_{E},E\right)
\\
\left( X,Y,Z\right)  & \longmapsto  & \mathbb{P}\left( X,Y\right)
Z:=\mathbb{R}\left( \mathcal{V}X,\mathcal{H}Y\right) Z.
\end{array}%
\]
\end{defn}
Note that if $X=X^{a}\tilde{\delta}_{a}+\dot{\tilde{X}}^{a}\dot{%
\tilde{\partial}}_{a},$
$Y=Y^{a}\tilde{\delta}_{a}+\dot{\tilde{Y}}^{a}\dot{\tilde{\partial}}_{a}$
and
$Z=Z^{a}\tilde{\delta}_{a}+\dot{\tilde{Z}}^{a}\dot{\tilde{\partial}}_{a},$
then
\[
\begin{array}{ccc}
\mathbb{P}\left( X,Y\right) Z & = &
\dot{\tilde{X}}^{d}Y^{c}Z^{a}\left( \rho ,\eta
,h\right) \mathbb{P}_{~b~cd}^{a}\tilde{\delta}_{a}+\dot{\tilde{X}}^{d}Y^{c}\dot{\tilde{Z}}%
^{a}\left( \rho ,\eta ,h\right)
\mathbb{\tilde{P}}_{~b~cd}^{a}\dot{\tilde{\partial}}_{a}.
\end{array}%
\]
%&&&&&&&&&&&&&&&&&&&&&&&&&&&&&&&&&&&&&&&&&&&&&&&&&&&&&&&&&&&&&&&&&&&&&&&&
\subsection{Formulas of Ricci type, identities of Cartan and Bianchi type}
%&&&&&&&&&&&&&&&&&&&&&&&&&&&&&&&&&&&&&&&&&&&&&&&&&&&&&&&&&&&&&&&&&&&&&&&&
We consider the generalized Lie algebroid $(\left( E,\pi ,M\right)
,\left[ ,\right] _{E,h},\left( \rho ,\eta \right))$.
\begin{theorem}
\label{fr9} Let $\left( \rho ,\eta \right) \Gamma $ be a $\left( \rho ,\eta
\right) $-connection for the vector bundle $\left( E,\pi ,M\right) $ and $%
\left( \left( \rho ,\eta \right) H,\left( \rho ,\eta \right) V\right) $ a
distinguished linear $\left( \rho ,\eta \right) $-connection for the
generalized tangent bundle $\left( \left( \rho ,\eta \right) TE,\left( \rho
,\eta \right) \tau _{E},E\right)$. Then
\begin{equation}
\left\{
\begin{array}{l}
\begin{array}{l}
\left( \rho ,\eta \right) D_{\mathcal{H}X}\left( \rho ,\eta \right) D_{%
\mathcal{H}Y}\mathcal{H}Z-\left( \rho ,\eta \right) D_{\mathcal{H}Y}\left(
\rho ,\eta \right) D_{\mathcal{H}X}\mathcal{H}Z \\
=\left( \rho ,\eta ,h\right) \mathbb{R}\left( \mathcal{H}X,\mathcal{H}%
Y\right) \mathcal{H}Z+\left( \rho ,\eta \right) D_{\mathcal{H}\left[
\mathcal{H}X,\mathcal{H}Y\right] _{\left( \rho ,\eta \right) TE}}\mathcal{H}Z
\\
+\left( \rho ,\eta \right) D_{\mathcal{V}\left[ \mathcal{H}X,\mathcal{H}Y%
\right] _{\left( \rho ,\eta \right) TE}}\mathcal{H}Z,%
\end{array}
\\
\multicolumn{1}{c}{%
\begin{array}{l}
\left( \rho ,\eta \right) D_{\mathcal{V}X}\left( \rho ,\eta \right) D_{%
\mathcal{H}Y}\mathcal{H}Z-\left( \rho ,\eta \right) D_{\mathcal{H}Y}\left(
\rho ,\eta \right) D_{\mathcal{V}X}\mathcal{H}Z \\
=\left( \rho ,\eta ,h\right) \mathbb{R}\left( \mathcal{V}X,\mathcal{H}%
Y\right) \mathcal{H}Z+\left( \rho ,\eta \right) D_{\mathcal{H}\left[
\mathcal{V}X,\mathcal{H}Y\right] _{\left( \rho ,\eta \right) TE}}\mathcal{H}Z
\\
+\left( \rho ,\eta \right) D_{\mathcal{V}\left[ \mathcal{V}X,\mathcal{H}Y%
\right] _{\left( \rho ,\eta \right) TE}}\mathcal{H}Z,%
\end{array}%
} \\
\begin{array}{c}
\left( \rho ,\eta \right) D_{\mathcal{V}X}\left( \rho ,\eta \right) D_{%
\mathcal{V}Y}\mathcal{H}Z-\left( \rho ,\eta \right) D_{\mathcal{V}Y}\left(
\rho ,\eta \right) D_{\mathcal{V}X}\mathcal{H}Z \\
=\left( \rho ,\eta ,h\right) \mathbb{R}\left( \mathcal{V}X,\mathcal{V}%
Y\right) \mathcal{H}Z+\left( \rho ,\eta \right) D_{\mathcal{V}\left[
\mathcal{V}X,\mathcal{V}Y\right] _{\left( \rho ,\eta \right) TE}}\mathcal{H}%
Z,%
\end{array}%
\end{array}%
\right.
\end{equation}
and
\begin{equation}
\left\{
\begin{array}{c}
\begin{array}{l}
\left( \rho ,\eta \right) D_{\mathcal{H}X}\left( \rho ,\eta \right) D_{%
\mathcal{H}Y}\mathcal{V}Z-\left( \rho ,\eta \right) D_{\mathcal{H}Y}\left(
\rho ,\eta \right) D_{\mathcal{H}X}\mathcal{V}Z \\
=\left( \rho ,\eta ,h\right) \mathbb{R}\left( \mathcal{H}X,\mathcal{H}%
Y\right) \mathcal{V}Z+\left( \rho ,\eta \right) D_{\mathcal{H}\left[
\mathcal{H}X,\mathcal{H}Y\right] _{\left( \rho ,\eta \right) TE}}\mathcal{V}Z
\\
+\left( \rho ,\eta \right) D_{\mathcal{V}\left[ \mathcal{H}X,\mathcal{H}Y%
\right] _{\left( \rho ,\eta \right) TE}}\mathcal{V}Z,%
\end{array}
\\
\begin{array}{l}
\left( \rho ,\eta \right) D_{\mathcal{V}X}\left( \rho ,\eta \right) D_{%
\mathcal{H}Y}\mathcal{V}Z-\left( \rho ,\eta \right) D_{\mathcal{H}Y}\left(
\rho ,\eta \right) D_{\mathcal{V}X}\mathcal{V}Z \\
=\left( \rho ,\eta ,h\right) \mathbb{R}\left( \mathcal{V}X,\mathcal{H}%
Y\right) \mathcal{V}Z+\left( \rho ,\eta \right) D_{h\left[ \mathcal{V}X,%
\mathcal{H}Y\right] _{\left( \rho ,\eta \right) TE}}\mathcal{V}Z \\
+\left( \rho ,\eta \right) D_{\mathcal{V}\left[ \mathcal{V}X,\mathcal{H}Y%
\right] _{\left( \rho ,\eta \right) TE}}\mathcal{V}Z,%
\end{array}
\\
\begin{array}{l}
\left( \rho ,\eta \right) D_{\mathcal{V}X}\left( \rho ,\eta \right) D_{%
\mathcal{V}Y}\mathcal{V}Z-\left( \rho ,\eta \right) D_{\mathcal{V}Y}\left(
\rho ,\eta \right) D_{\mathcal{V}X}\mathcal{V}Z \\
=\left( \rho ,\eta ,h\right) \mathbb{R}\left( \mathcal{V}X,\mathcal{V}%
Y\right) \mathcal{V}Z+\left( \rho ,\eta \right) D_{\mathcal{V}\left[
\mathcal{V}X,\mathcal{V}Y\right] _{\left( \rho ,\eta \right) TE}}\mathcal{V}%
Z.%
\end{array}%
\end{array}%
\right.
\end{equation}
\end{theorem}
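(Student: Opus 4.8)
The plan is to obtain all six identities from the single definition of the $\left(\rho,\eta,h\right)$-curvature, combined with the canonical horizontal--vertical splitting of a section. Recall that for any three sections $U,V,W\in\Gamma\left(\left(\rho,\eta\right)TE,\left(\rho,\eta\right)\tau_{E},E\right)$ one has
\begin{align*}
\left(\rho,\eta,h\right)\mathbb{R}\left(U,V\right)W={}&\left(\rho,\eta\right)D_{U}\left(\rho,\eta\right)D_{V}W-\left(\rho,\eta\right)D_{V}\left(\rho,\eta\right)D_{U}W\\
&-\left(\rho,\eta\right)D_{\left[U,V\right]_{\left(\rho,\eta\right)TE}}W.
\end{align*}
For the first identity I would set $U=\mathcal{H}X$, $V=\mathcal{H}Y$ and $W=\mathcal{H}Z$; transposing the bracket term to the other side makes the left-hand difference of iterated covariant derivatives equal to $\left(\rho,\eta,h\right)\mathbb{R}\left(\mathcal{H}X,\mathcal{H}Y\right)\mathcal{H}Z$ plus $\left(\rho,\eta\right)D_{\left[\mathcal{H}X,\mathcal{H}Y\right]_{\left(\rho,\eta\right)TE}}\mathcal{H}Z$.

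The only remaining step is to split this last term. Since every section decomposes uniquely as $\Xi=\mathcal{H}\Xi+\mathcal{V}\Xi$ (established above for the projectors $\mathcal{H}$ and $\mathcal{V}$) and $\left(\rho,\eta\right)D$ is $\mathcal{F}\left(E\right)$-linear, hence additive, in its direction argument, I would write
\begin{align*}
\left(\rho,\eta\right)D_{\left[\mathcal{H}X,\mathcal{H}Y\right]_{\left(\rho,\eta\right)TE}}\mathcal{H}Z={}&\left(\rho,\eta\right)D_{\mathcal{H}\left[\mathcal{H}X,\mathcal{H}Y\right]_{\left(\rho,\eta\right)TE}}\mathcal{H}Z\\
&+\left(\rho,\eta\right)D_{\mathcal{V}\left[\mathcal{H}X,\mathcal{H}Y\right]_{\left(\rho,\eta\right)TE}}\mathcal{H}Z,
\end{align*}
which is precisely the first stated equation. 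The second, fourth and fifth identities then follow verbatim by choosing the directions $\left(U,V\right)$ to be $\left(\mathcal{V}X,\mathcal{H}Y\right)$ or $\left(\mathcal{H}X,\mathcal{H}Y\right)$ and the differentiated section to be $\mathcal{H}Z$ or $\mathcal{V}Z$ accordingly; no special property of the directions is used, so these are pure bookkeeping.

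The genuinely different cases are the third and sixth identities, where both directions are vertical. Here the hard part, such as it is, will be to justify dropping the horizontal bracket contribution, and for this I would invoke the involutivity of the vertical subbundle: from $\left[\dot{\tilde{\partial}}_{a},\dot{\tilde{\partial}}_{b}\right]_{\left(\rho,\eta\right)TE}=0$ (one of the bracket formulas in the theorem of Subsection 2.1), together with the Leibniz rule for $\left[,\right]_{\left(\rho,\eta\right)TE}$ with anchor $\tilde{\rho}$, the bracket of any two vertical sections is again vertical, so that $\mathcal{H}\left[\mathcal{V}X,\mathcal{V}Y\right]_{\left(\rho,\eta\right)TE}=0$. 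Therefore only the vertical summand survives when the term $\left(\rho,\eta\right)D_{\left[\mathcal{V}X,\mathcal{V}Y\right]_{\left(\rho,\eta\right)TE}}\left(\cdot\right)$ is split, which explains the absence of an $\mathcal{H}$-bracket contribution in those two formulas and completes all six cases.
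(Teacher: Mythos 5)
Your proposal is correct and follows essentially the same route as the paper, whose proof is simply the one-line remark that the assertion results from the definition of the $\left(\rho,\eta,h\right)$-curvature; you have just written out the details (substituting the projected sections, transposing the bracket term, and splitting it via $\Xi=\mathcal{H}\Xi+\mathcal{V}\Xi$). Your explicit justification that the bracket of two vertical sections is again vertical, via $\bigl[\dot{\tilde{\partial}}_{a},\dot{\tilde{\partial}}_{b}\bigr]_{\left(\rho,\eta\right)TE}=0$ and the Leibniz rule, is exactly the point the paper leaves implicit for the two all-vertical identities.
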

\begin{proof}
Using the definition of $%
\left( \rho ,\eta ,h\right) $-curvature associated to the
distinguished linear $\left( \rho ,\eta \right) $-connection $\left(
\left( \rho ,\eta \right) H,\left( \rho ,\eta \right) V\right) $,
results the assertion.
\end{proof}
Using Theorem (\ref{fr9}), the horizontal and vertical sections of adapted
base and an arbitrary section
\begin{equation*}
\begin{array}{c}
Y^{a}\tilde{\delta}_a+\dot{\tilde{Y}}^{a}\dot{\tilde{\partial}}_a\in
\Gamma \left( \left( \rho ,\eta \right) TE,\left( \rho
,\eta \right) \tau _{E},E\right) ,%
\end{array}%
\end{equation*}%
arise to the following Theorem.
\begin{theorem}
The formulas of Ricci type
\begin{equation}  \label{fr1}
\left\{
\begin{array}{l}
\begin{array}{cl}
Y_{~\ |c|b}^{a}-Y_{~\ |b|c}^{a} & =\left( \rho ,\eta ,h\right)
\mathbb{R}_{~e~cb}^{a}Y^{e}+\left( L_{bc}^{e}\circ h\circ
\pi \right) Y_{~\ |e}^{a}\vspace*{2mm} \\
& +\left( \rho ,\eta ,h\right) \mathbb{\tilde{T}}_{~bc}^{e}Y%
^{a}|_{e}+\left( \rho ,\eta ,h\right) \mathbb{T}_{~bc}^{e}Y_{~\
|e}^{\alpha },%
\end{array}%
\vspace*{2mm} \\
\begin{array}{cl}
Y_{~\ |c}^{a}|_{b}-Y^{a}|_{b}{}_{|c} & =\left( \rho ,\eta ,h\right)
\mathbb{P}_{~e~cb}^{a}Y^{e}-\left( \rho ,\eta ,h\right)
\mathbb{\tilde{P}}_{~cb}^{e}Y^{a}\vspace*{2mm}|_{e} \\
& -\left( \rho ,\eta \right) \mathbb{H}_{bc}^{e}Y^{a}|_{e},%
\end{array}%
\vspace*{2mm} \\
\begin{array}{cc}
Y^{a}|_{c}|_{b}-Y^{a}|_{b}|_{c} & =\left( \rho ,\eta ,h\right)
\mathbb{S}_{~e~cb}^{a}Y^{e}+\left( \rho ,\eta ,h\right)
\mathbb{S}_{~bc}^{e}Y^{a}|_{e},%
\end{array}%
\end{array}%
\right.
\end{equation}
and
\begin{equation}  \label{fr2}
\left\{
\begin{array}{l}
\begin{array}{cl}
\dot{\tilde{Y}}_{~\ |c|b}^{a}-\dot{\tilde{Y}}_{~\ |b|c}^{a} & =\left( \rho ,\eta ,h\right) \mathbb{%
\tilde{R}}_{~e~cb}^{a}\dot{\tilde{Y}}^{e}+\left( L_{bc}^{e}\circ
h\circ \pi \right) \dot{\tilde{Y}}_{~\
|e}^{a}\vspace*{2mm} \\
& +\left( \rho ,\eta \right)
\mathbb{\tilde{T}}_{~bc}^{e}\dot{\tilde{Y}}^{a}|_{e}+\left(
\rho ,\eta ,h\right) \mathbb{T}_{~bc}^{e}\dot{\tilde{Y}}_{~\ |e}^{a},%
\end{array}%
\vspace*{2mm} \\
\begin{array}{cl}
\dot{\tilde{Y}}_{~\ |\gamma
}^{a}|_{b}-\dot{\tilde{Y}}^{a}|_{b}{}_{|\gamma } & =\left( \rho
,\eta ,h\right)
\mathbb{\tilde{P}}_{~e~cb}^{a}\dot{\tilde{Y}}^{e}-\left( \rho ,\eta
,h\right)
\mathbb{\tilde{P}}_{~cb}^{e}\dot{\tilde{Y}}^{a}\vspace*{2mm}|_{e} \\
& -\left( \rho ,\eta \right) \mathbb{\tilde{H}}_{bc}^{e}\dot{\tilde{Y}}^{a}|_{e},%
\end{array}%
\vspace*{2mm} \\
\begin{array}{cc}
\dot{\tilde{Y}}^{a}|_{c}|_{b}-\dot{\tilde{Y}}^{a}|_{b}|_{c} & =\left( \rho ,\eta ,h\right) \mathbb{%
\tilde{S}}_{~e~cb}^{a}\dot{\tilde{Y}}^{b}+\left( \rho ,\eta ,h\right) \mathbb{S}%
_{~bc}^{e}\dot{\tilde{Y}}^{a}|_{e},
\end{array}%
\end{array}%
\right.
\end{equation}%
are hold. In particular, if $\left( \rho ,\eta ,h\right) =\left(
Id_{TM},Id_{M},id_{M}\right) $ and the Lie bracket $\left[ ,\right] _{TM}$
is the usual Lie bracket, then the formulas of Ricci type (\ref{fr1}) and (%
\ref{fr2}) reduce to
\begin{equation}
\left\{
\begin{array}{cl}
Y_{~~|k|j}^{i}-Y_{~~|j|k}^{i} & =\mathbb{R}_{~h~kj}^{i}%
Y^{h}+\mathbb{\tilde{T}}_{~jk}^{h}Y^{i}|_{h}+\mathbb{T}%
_{~jk}^{h}Y_{~\ |h}^{i},\vspace*{2mm} \\
Y^{i}~_{|k}|_{j}-Y^{i}|_{j|k} & =\mathbb{P}_{~h~kj}^{i}%
Y^{h}-\mathbb{\tilde{P}}_{~kj}^{h}Y^{i}|_{h}-\mathbb{\tilde{H%
}}_{jk}^{h}Y^{i}|_{h},\vspace*{2mm} \\
Y^{i}|_{c}|_{b}-Y^{i}|_{b}|_{c} & =\mathbb{S}_{~h~kj}^{i}%
Y^{h}+\mathbb{S}_{~jk}^{h}Y^{i}|_{h},%
\end{array}%
\right.
\end{equation}%
and
\begin{equation}
\left\{
\begin{array}{cl}
\dot{\tilde{Y}}_{~~|k|j}^{i}-\dot{\tilde{Y}}_{~~|j|k}^{i} & =\mathbb{R}_{~h~kj}^{i}\dot{\tilde{Y}}^{h}+\mathbb{\tilde{T%
}}_{~jk}^{h}\dot{\tilde{Y}}^{a}|_{h}+\mathbb{T}_{~jk}^{h}\dot{\tilde{Y}}_{~\ |h}^{a},\vspace*{2mm} \\
\dot{\tilde{Y}}_{~~\ |k}^{i}|_{j}-\dot{\tilde{Y}}^{i}|_{j|k} & =\mathbb{\tilde{P}}_{~h~kj}^{i}\dot{\tilde{Y}}^{h}-%
\mathbb{P}_{~kj}^{h}\dot{\tilde{Y}}^{i}|_{h}-\mathbb{H}_{jk}^{h}\dot{\tilde{Y}}^{i}|_{h},\vspace*{2mm}
\\
\dot{\tilde{Y}}^{i}|_{k}|_{j}-\dot{\tilde{Y}}^{i}|_{j}|_{k} & =\mathbb{\tilde{S}}_{~h~kj}^{i}\dot{\tilde{Y}}^{h}+%
\mathbb{S}_{~jk}^{h}\dot{\tilde{Y}}^{i}|_{h}.%
\end{array}%
\right.
\end{equation}
\end{theorem}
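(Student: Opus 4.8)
The plan is to derive each of the six Ricci-type identities by feeding adapted basis sections into the three curvature identities of Theorem \ref{fr9} and then reading off the coefficient of a single adapted basis vector. Concretely, I would split the arbitrary section as $Z=\mathcal{H}Z+\mathcal{V}Z$ with $\mathcal{H}Z=Y^{a}\tilde{\delta}_{a}$ and $\mathcal{V}Z=\dot{\tilde{Y}}^{a}\dot{\tilde{\partial}}_{a}$, apply the first block of Theorem \ref{fr9} to $\mathcal{H}Z$ to obtain \eqref{fr1}, and apply the second block to $\mathcal{V}Z$ to obtain \eqref{fr2}. The notations $Y^{a}_{~|c}$ and $Y^{a}|_{c}$ refer to the horizontal and vertical covariant derivatives built from the components $(\rho,\eta)H^{a}_{bc}$ and $(\rho,\eta)V^{a}_{bc}$ respectively, so the iterated symbols $Y^{a}_{~|c|b}$, $Y^{a}|_{b}{}_{|c}$, etc., are exactly what the left-hand sides of the two repeated derivatives in Theorem \ref{fr9} produce.

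For the first identity of \eqref{fr1}, I would set $\mathcal{H}X=\tilde{\delta}_{b}$, $\mathcal{H}Y=\tilde{\delta}_{c}$ and $\mathcal{H}Z=Y^{a}\tilde{\delta}_{a}$ in the first equation of Theorem \ref{fr9}. Expanding the iterated covariant derivatives by means of formula \eqref{good} and the connection components yields precisely $Y^{a}_{~|c|b}$ and $Y^{a}_{~|b|c}$ on the left. On the right, the curvature term is read off from the defining relation $(\rho,\eta,h)\mathbb{R}(\tilde{\delta}_{d},\tilde{\delta}_{c})\tilde{\delta}_{b}=(\rho,\eta,h)\mathbb{R}^{a}_{~b~cd}\tilde{\delta}_{a}$, contributing $(\rho,\eta,h)\mathbb{R}^{a}_{~e~cb}Y^{e}$, while the remaining terms come from decomposing the derivative along the Lie bracket $[\tilde{\delta}_{b},\tilde{\delta}_{c}]_{(\rho,\eta)TE}$ into its $\mathcal{H}$- and $\mathcal{V}$-parts.

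The key input for this last step is the bracket computation of the earlier theorem in Subsection 2.1, namely $[\tilde{\delta}_{a},\tilde{\delta}_{b}]_{(\rho,\eta)TE}=(L^{c}_{ab}\circ h\circ\pi)\tilde{\delta}_{c}+(\rho,\eta,h)\mathbb{R}^{c}_{~ab}\dot{\tilde{\partial}}_{c}$, together with the expressions for $[\tilde{\delta}_{a},\dot{\tilde{\partial}}_{b}]_{(\rho,\eta)TE}$ and $[\dot{\tilde{\partial}}_{a},\dot{\tilde{\partial}}_{b}]_{(\rho,\eta)TE}=0$. Projecting these onto the horizontal and vertical subbundles and substituting into the $\mathcal{H}[\cdot,\cdot]$ and $\mathcal{V}[\cdot,\cdot]$ derivative terms of Theorem \ref{fr9}, I would recognize the structure functions as the torsion coefficients tabulated just above the statement: the horizontal part produces the $(L^{e}_{bc}\circ h\circ\pi)$ and $(\rho,\eta,h)\mathbb{T}^{e}_{~bc}$ contributions, and the vertical part produces the $(\rho,\eta,h)\mathbb{\tilde{T}}^{e}_{~bc}$ contribution. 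The other five identities follow the identical pattern, replacing one or both of $\tilde{\delta}_{b},\tilde{\delta}_{c}$ by $\dot{\tilde{\partial}}_{b},\dot{\tilde{\partial}}_{c}$ and invoking the mixed components $(\rho,\eta,h)\mathbb{P}$, $(\rho,\eta,h)\mathbb{\tilde{P}}$ and $(\rho,\eta,h)\mathbb{S}$.

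I expect the main obstacle to be bookkeeping rather than any conceptual difficulty: one must track carefully which projection of each bracket survives and verify that the sign conventions in the definitions of $(\rho,\eta,h)\mathbb{T}^{a}_{~bc}$, $(\rho,\eta,h)\mathbb{\tilde{P}}^{a}_{~bc}$ and $(\rho,\eta,h)\mathbb{S}^{a}_{~bc}$ match the coefficients generated by the expansion, so that the curvature and torsion terms recombine into exactly the stated right-hand sides. Once \eqref{fr1} and \eqref{fr2} are established, the particular case $(\rho,\eta,h)=(Id_{TM},Id_{M},Id_{M})$ with the usual Lie bracket is immediate: the structure functions $L^{c}_{ab}\circ h\circ\pi$ vanish and every $(\rho,\eta,\cdot)$-decorated object collapses to its classical counterpart, so the two displayed reductions follow by direct substitution.
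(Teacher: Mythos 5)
Your proposal is correct and is essentially the paper's own argument: the paper proves this theorem precisely by applying the operator identities of Theorem \ref{fr9} to adapted-basis sections and to the horizontal and vertical parts of an arbitrary section $Y^{a}\tilde{\delta}_{a}+\dot{\tilde{Y}}^{a}\dot{\tilde{\partial}}_{a}$, then reading off components. The extra bookkeeping you describe (expanding via \eqref{good}, substituting the adapted-basis bracket formulas, and matching the structure functions with the tabulated torsion components) is exactly what the paper leaves implicit in its one-line proof.
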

Using the $1$-forms associated to distinguished linear $( \rho ,\eta
) $-connection $(( \rho ,\eta ) H,$ $( \rho ,\eta
) V) $%
%\begin{equation*}
%\left( 5.2.4.11\right) ~\ \ \left\{
\begin{equation}
\begin{array}{c}
\left( \rho ,\eta \right) \omega _{b}^{a}=\left( \rho ,\eta \right)
H_{bc}^{a}d\tilde{x}^{c}+\left( \rho ,\eta \right) V_{bc}^{a}\delta \tilde{y}%
^{c},\vspace*{2mm} \\
\left( \rho ,\eta \right) \tilde{\omega}_{b}^{a}=\left( \rho ,\eta \right)
\tilde{H}_{bc}^{a}d\tilde{x}^{c}+\left( \rho ,\eta \right) \tilde{V}%
_{bc}^{a}\delta \tilde{y}^{c},%
\end{array}%
\end{equation}%
the torsion $2$-forms
\begin{equation}
\left\{
\begin{array}{cl}
\left( \rho ,\eta ,h\right) \mathbb{T}^{a} &
\displaystyle=\frac{1}{2}\left( \rho ,\eta ,h\right)
\mathbb{T}_{~bc}^{a}d\tilde{x}^{b}\wedge d\tilde{x}^{c}+\left( \rho
,\eta ,h\right) \mathbb{P}_{~bc}^{a}d\tilde{x}^{b}\wedge
\delta \tilde{y}^{c},\vspace*{2mm} \\
\left( \rho ,\eta ,h\right) \mathbb{\tilde{T}}^{a} & =\displaystyle\frac{1}{2%
}\left( \rho ,\eta ,h\right) \mathbb{\tilde{T}}_{~bc}^{a}d\tilde{x}^{b}\wedge d\tilde{x}^{c}+\left( \rho ,\eta ,h\right) \mathbb{\tilde{P}}%
_{~bc}^{a}d\tilde{x}^{b}\wedge \delta \tilde{y}^{c}\vspace*{2mm} \\
& +\displaystyle\frac{1}{2}\left( \rho ,\eta ,h\right) \mathbb{S}%
_{~bc}^{a}\delta \tilde{y}^{b}\wedge \delta \tilde{y}^{c},
\end{array}%
\right.
\end{equation}%
and the curvature $2$-forms%
\begin{equation}
\left\{
\begin{array}{cl}
\left( \rho ,\eta ,h\right) \mathbb{R}_{~b}^{a} & =\displaystyle\frac{1}{2}%
\left( \rho ,\eta ,h\right) \mathbb{R}_{~b~cd}^{a}d\tilde{x}^{c}\wedge d%
\tilde{x}^{d}+\left( \rho ,\eta ,h\right)
\mathbb{P}_{~b~cd}^{a}d\tilde{x}^{c}\wedge \delta \tilde{y}^{d}\vspace*{2mm} \\
& \displaystyle+\frac{1}{2}\left( \rho ,\eta ,h\right) \mathbb{S}%
_{~b~cd}^{a}\delta \tilde{y}^{c}\wedge \delta \tilde{y}^{d},\vspace*{2mm} \\
\left( \rho ,\eta ,h\right) \mathbb{R}_{~b}^{a} & \displaystyle=\frac{1}{2}%
\left( \rho ,\eta ,h\right) \mathbb{\tilde{R}}_{~b~cd}^{a}d\tilde{x}%
^{c}\wedge d\tilde{x}^{d}+\left( \rho ,\eta ,h\right) \mathbb{\tilde{P}}%
_{~b~cd}^{a}d\tilde{x}^{c}\wedge \delta \tilde{y}^{d}\vspace*{2mm} \\
& \displaystyle+\frac{1}{2}\left( \rho ,\eta ,h\right) \mathbb{\tilde{S}}%
_{~b~cd}^{a}\delta \tilde{y}^{c}\wedge \delta \tilde{y}^{d},%
\end{array}%
\right.
\end{equation}%
The following will be obtained.
\begin{theorem}
\label{fr5} The identities of Cartan type
%\begin{equation*}
%\left( C_{1}\right) ~\ \ \left\{
\begin{equation}
\begin{array}{c}
\left( \rho ,\eta ,h\right) \mathbb{T}^{a}=d^{\left( \rho ,\eta
\right) TE}\left( d\tilde{x}^{a}\right) +\left( \rho ,\eta \right)
\omega
_{b}^{a}\wedge d\tilde{x}^{b},\vspace*{2mm} \\
\left( \rho,\eta ,h\right) \mathbb{\tilde{T}}^{a}=d^{\left( \rho
,\eta \right) TE}\left( \delta \tilde{y}^{a}\right) +\left( \rho
,\eta \right)
\tilde{\omega}_{b}^{a}\wedge \delta \tilde{y}^{b},%
\end{array}
\label{fr3}
\end{equation}%
and
\begin{equation}
\begin{array}{c}
\left( \rho ,\eta ,h\right) \mathbb{R}_{~b}^{a}=d^{\left( \rho ,\eta \right)
TE}\left( \left( \rho ,\eta \right) \omega _{b}^{a}\right) +\left( \rho
,\eta \right) \omega _{c}^{a}\wedge \left( \rho ,\eta \right) \omega
_{b}^{c},\vspace*{2mm} \\
\left( \rho ,\eta ,h\right) \mathbb{\tilde{R}}_{~b}^{a}=d^{\left( \rho ,\eta
\right) TE}\left( \left( \rho ,\eta \right) \tilde{\omega}_{b}^{a}\right)
+\left( \rho ,\eta \right) \tilde{\omega}_{c}^{a}\wedge \left( \rho ,\eta
\right) \tilde{\omega}_{b}^{c},
\end{array}
\label{fr4}
\end{equation}
are true.
\end{theorem}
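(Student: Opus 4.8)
The plan is to verify each of the four structure equations by evaluating both sides on pairs drawn from the adapted $(\rho ,\eta )$-base $(\tilde{\delta}_a,\dot{\tilde{\partial}}_a)$ and comparing coefficients, exploiting that $(d\tilde{x}^a,\delta \tilde{y}^a)$ is the dual adapted base. The only structural input beyond bookkeeping is the Koszul-type formula for the exterior derivative $d^{(\rho ,\eta )TE}$ attached to the Lie algebroid $((\rho ,\eta )TE,[,]_{(\rho ,\eta )TE},(\tilde{\rho},Id_E))$: for a $1$-form $\theta$,
$$d^{(\rho ,\eta )TE}\theta (X,Y)=\Gamma (\tilde{\rho},Id_E)(X)(\theta (Y))-\Gamma (\tilde{\rho},Id_E)(Y)(\theta (X))-\theta ([X,Y]_{(\rho ,\eta )TE}),$$
together with the bracket relations of the adapted base recorded in the earlier theorem.

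First I would treat the torsion identities \eqref{fr3}. Applying the formula above to $\theta =d\tilde{x}^a$, the two anchor terms vanish on any pair of base fields because $d\tilde{x}^a$ takes constant values ($\delta ^a_b$ or $0$) there, so $d^{(\rho ,\eta )TE}(d\tilde{x}^a)$ reduces to $-d\tilde{x}^a([\cdot ,\cdot ]_{(\rho ,\eta )TE})$; substituting the three bracket relations isolates exactly $L^a_{bc}\circ h\circ \pi $ on the horizontal-horizontal slot and zero elsewhere. Adding $(\rho ,\eta )\omega ^a_b\wedge d\tilde{x}^b$ and using $(\rho ,\eta )\omega ^a_b(\tilde{\delta}_c)=(\rho ,\eta )H^a_{bc}$, $(\rho ,\eta )\omega ^a_b(\dot{\tilde{\partial}}_c)=(\rho ,\eta )V^a_{bc}$ reproduces the components $(\rho ,\eta ,h)\mathbb{T}_{~bc}^{a}$ and $(\rho ,\eta ,h)\mathbb{P}_{~bc}^{a}$ read off from the torsion $2$-form. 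The second identity is handled identically with $\theta =\delta \tilde{y}^a$: the anchor terms again vanish on base pairs, but now $[\tilde{\delta}_b,\tilde{\delta}_c]$ contributes its vertical part $(\rho ,\eta ,h)\mathbb{R}_{~bc}^{a}$, which is precisely what matches $(\rho ,\eta ,h)\tilde{\mathbb{T}}_{~bc}^{a}$, while the mixed and vertical brackets feed $(\rho ,\eta ,h)\tilde{\mathbb{P}}_{~bc}^{a}$ and $(\rho ,\eta ,h)\mathbb{S}_{~bc}^{a}$.

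Next I would prove the curvature identities \eqref{fr4}. Writing $(\rho ,\eta )\omega ^a_b=(\rho ,\eta )H^a_{be}d\tilde{x}^e+(\rho ,\eta )V^a_{be}\delta \tilde{y}^e$ and expanding $d^{(\rho ,\eta )TE}((\rho ,\eta )\omega ^a_b)$ by the Leibniz rule produces three kinds of terms: the anchor derivatives $\Gamma (\tilde{\rho},Id_E)(\tilde{\delta}_d)((\rho ,\eta )H^a_{bc})$ and $\Gamma (\tilde{\rho},Id_E)(\dot{\tilde{\partial}}_d)((\rho ,\eta )V^a_{bc})$ of the connection coefficients, the terms coming from $d^{(\rho ,\eta )TE}(d\tilde{x}^e)$ and $d^{(\rho ,\eta )TE}(\delta \tilde{y}^e)$ computed in the torsion step, and the quadratic contribution $(\rho ,\eta )\omega ^a_c\wedge (\rho ,\eta )\omega ^c_b$. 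Collecting the coefficients of $d\tilde{x}^c\wedge d\tilde{x}^d$, $d\tilde{x}^c\wedge \delta \tilde{y}^d$ and $\delta \tilde{y}^c\wedge \delta \tilde{y}^d$ and comparing with the curvature $2$-form reproduces the component formulas for $(\rho ,\eta ,h)\mathbb{R}_{~b~cd}^{a}$, $(\rho ,\eta ,h)\mathbb{P}_{~b~cd}^{a}$ and $(\rho ,\eta ,h)\mathbb{S}_{~b~cd}^{a}$ established in the preceding theorem; the tilde identity is the same computation run with $(\rho ,\eta )\tilde{\omega}^a_b$, $(\rho ,\eta )\tilde{H}$ and $(\rho ,\eta )\tilde{V}$.

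I expect the main obstacle to be the bookkeeping in $d^{(\rho ,\eta )TE}(\delta \tilde{y}^a)$ and, through it, in $d^{(\rho ,\eta )TE}((\rho ,\eta )\omega ^a_b)$: since $\delta \tilde{y}^a=(\rho ,\eta )\Gamma ^a_c\,d\tilde{x}^c+d\tilde{y}^a$ is built from the connection coefficients, its exterior derivative injects the curvature $(\rho ,\eta ,h)\mathbb{R}_{~bc}^{a}$ of the nonlinear connection into each slot, and one must repeatedly invoke the explicit expression of $(\rho ,\eta ,h)\mathbb{R}_{~bc}^{a}$ together with the anchor action on the $(\rho ,\eta )\Gamma ^a_c$ to see the surplus terms collapse to the stated curvature components. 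A secondary care point is sign and normalization consistency: the factor $\tfrac{1}{2}$ in the $2$-forms and the chosen sign conventions for $d^{(\rho ,\eta )TE}$ and for $\wedge$ must be fixed at the outset so that the antisymmetrizations agree with the component definitions. Once these conventions are pinned down, each of the four equations becomes a finite comparison of coefficients on the three basis $2$-vectors, and no further idea is required.
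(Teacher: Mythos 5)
Your proposal is correct and is essentially the argument the paper intends: the paper states this theorem without proof, presenting it as a direct consequence of the definitions of the connection $1$-forms, the torsion and curvature $2$-forms, and the adapted-base bracket relations, and your component-by-component verification via the Koszul formula for $d^{(\rho ,\eta )TE}$ on the adapted base is exactly that omitted computation. Your closing caution about signs and normalization is well placed --- the paper's printed component formulas contain several sign inconsistencies (for instance the sign of $L_{bc}^{a}\circ h\circ \pi$ in $\left( \rho ,\eta ,h\right) \mathbb{T}_{~bc}^{a}$, and an $H$-versus-$V$ slip in $\left( \rho ,\eta ,h\right) \mathbb{R}_{~b~cd}^{a}$), so a careful execution must first fix conventions and will then validate the identities with the corrected components, exactly as you anticipate.
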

\begin{rem}
\label{fr6} For any $X,Y,Z\in \Gamma \left( \left( \rho ,\eta \right)
TE,\left( \rho ,\eta \right) \tau _{E},E\right) $, the identities
\begin{equation}
\begin{array}{rc}
\mathcal{V}\left( \rho ,\eta ,h\right) \mathbb{R}\left( X,Y\right) \mathcal{H%
}Z & =0,\vspace*{2mm} \\
\mathcal{H}\left( \rho ,\eta ,h\right) \mathbb{R}\left( X,Y\right) \mathcal{V%
}Z & =0,%
\end{array}%
\end{equation}%
\begin{equation}
\begin{array}{cc}
\mathcal{V}D_{X}\left( \left( \rho ,\eta ,h\right) \mathbb{R}\left(
Y,Z\right) \mathcal{H}U\right) & =0,\vspace*{2mm} \\
\mathcal{H}D_{X}\left( \left( \rho ,\eta ,h\right) \mathbb{R}\left(
Y,Z\right) \mathcal{V}U\right) & =0,%
\end{array}%
\end{equation}%
and
\begin{align}
( \rho ,\eta ,h) \mathbb{R}( X,Y) Z&=\mathcal{H}( \rho ,\eta ,h) \mathbb{R}(
X,Y) \mathcal{H}Z+\mathcal{V}( \rho ,\eta ,h) \mathbb{R}( X,Y) \mathcal{V}Z,
&
\end{align}
are hold.
\end{rem}
Using the formulas of Bianchi type from Theorem (\ref{fr5}) and Remark (\ref%
{fr6}), the following will be resulted.
\begin{theorem}
The identities of Bianchi type
\begin{equation}  \label{fr7}
\left\{
\begin{array}{l}
\underset{cyclic\left( X,Y,Z\right) }{\sum }\left\{ \mathcal{H}\left( \rho
,\eta \right) D_{X}\left( \left( \rho ,\eta ,h\right) \mathbb{T}\left(
Y,Z\right) \right) -\mathcal{H}\left( \rho ,\eta ,h\right) \mathbb{R}\left(
X,Y\right) Z\right. \vspace*{2mm} \\
\qquad \qquad +\mathcal{H}\left( \rho ,\eta ,h\right) \mathbb{T}\left(
\mathcal{H}\left( \rho ,\eta ,h\right) \mathbb{T}\left( X,Y\right) ,Z\right)
\vspace*{2mm} \\
\qquad \qquad \left. +\mathcal{H}\left( \rho ,\eta ,h\right) \mathbb{T}%
\left( \mathcal{V}\left( \rho ,\eta ,h\right) \mathbb{T}\left( X,Y\right)
,Z\right) \right\} =0,\vspace*{4mm} \\
\underset{cyclic\left( X,Y,Z\right) }{\sum }\left\{ \mathcal{V}\left( \rho
,\eta \right) D_{X}\left( \left( \rho ,\eta ,h\right) \mathbb{T}\left(
Y,Z\right) \right) -\mathcal{V}\left( \rho ,\eta ,h\right) \mathbb{R}\left(
X,Y\right) Z\right. \vspace*{2mm} \\
\qquad \qquad +\mathcal{V}\left( \rho ,\eta ,h\right) \mathbb{T}\left(
\mathcal{H}\left( \rho ,\eta ,h\right) \mathbb{T}\left( X,Y\right) ,Z\right)
\vspace*{2mm} \\
\qquad \qquad \left. +\mathcal{V}\left( \rho ,\eta ,h\right) \mathbb{T}%
\left( \mathcal{V}\left( \rho ,\eta ,h\right) \mathbb{T}\left( X,Y\right)
,Z\right) \right\} =0,%
\end{array}%
\right.
\end{equation}
and
\begin{equation}  \label{fr8}
\left\{
\begin{array}{l}
\underset{cyclic\left( X,Y,Z,U\right) }{\sum }\left\{ \mathcal{H}\left( \rho
,\eta \right) D_{X}\left( \left( \rho ,\eta ,h\right) \mathbb{R}\left(
Y,Z\right) U\right) \right. \vspace*{2mm} \\
\qquad \qquad -\mathcal{H}\left( \rho ,\eta ,h\right) \mathbb{R}\left(
\mathcal{H}\left( \rho ,\eta ,h\right) \mathbb{T}\left( X,Y\right) ,Z\right)
U\vspace*{2mm} \\
\qquad \qquad \left. -\mathcal{H}\left( \rho ,\eta ,h\right) \mathbb{R}%
\left( \mathcal{V}\left( \rho ,\eta ,h\right) \mathbb{T}\left( X,Y\right)
,Z\right) U\right\} =0,\vspace*{4mm} \\
\underset{cyclic\left( X,Y,Z,U\right) }{\sum }\left\{ \mathcal{V}\left( \rho
,\eta \right) D_{X}\left( \left( \rho ,\eta ,h\right) \mathbb{R}\left(
Y,Z\right) U\right) \right. \vspace*{2mm} \\
\qquad \qquad -\mathcal{V}\left( \rho ,\eta ,h\right) \mathbb{R}\left(
\mathcal{H}\left( \rho ,\eta ,h\right) \mathbb{T}\left( X,Y\right) ,Z\right)
U\vspace*{2mm} \\
\qquad \qquad \left. -\mathcal{V}\left( \rho ,\eta ,h\right) \mathbb{R}%
\left( \mathcal{V}\left( \rho ,\eta ,h\right) \mathbb{T}\left( X,Y\right)
,Z\right) U\right\} =0,%
\end{array}%
\right.
\end{equation}%
for any $X,Y,Z\in \Gamma \left( \left( \rho ,\eta \right) TE,\left( \rho
,\eta \right) \tau _{E},E\right)$ are hold.
\end{theorem}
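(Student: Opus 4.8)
The plan is to derive both families of identities purely from the defining formulas for the $(\rho,\eta,h)$-torsion $\mathbb{T}$ and the $(\rho,\eta,h)$-curvature $\mathbb{R}$, the Jacobi identity for the Lie algebroid bracket $[,]_{(\rho,\eta)TE}$, and the projector vanishings collected in Remark \ref{fr6}. Throughout I would work with arbitrary sections $X,Y,Z,U\in\Gamma((\rho,\eta)TE,(\rho,\eta)\tau_E,E)$ and only decompose via $Id=\mathcal{H}+\mathcal{V}$ at the very end; since $\mathbb{T}$ and $\mathbb{R}$ are $\mathcal{F}(E)$-multilinear (they are characterized by tensor fields, as established above), every intermediate manipulation on sections is legitimate.

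For the first identity \eqref{fr7}, I would start from $(\rho,\eta,h)\mathbb{T}(Y,Z)=(\rho,\eta)D_YZ-(\rho,\eta)D_ZY-[Y,Z]_{(\rho,\eta)TE}$, apply $(\rho,\eta)D_X$, and form the cyclic sum over $(X,Y,Z)$. The six second-order terms $(\rho,\eta)D_X(\rho,\eta)D_YZ$ recombine, after cyclic relabeling, exactly into $\sum_{cyc}(\rho,\eta,h)\mathbb{R}(X,Y)Z$ plus the connection terms $\sum_{cyc}(\rho,\eta)D_{[X,Y]_{(\rho,\eta)TE}}Z$. The remaining first-order pieces $\sum_{cyc}(\rho,\eta)D_X[Y,Z]_{(\rho,\eta)TE}$ together with these connection terms combine, via the Jacobi identity $\sum_{cyc}[[X,Y]_{(\rho,\eta)TE},Z]_{(\rho,\eta)TE}=0$, into $\sum_{cyc}\mathbb{T}([X,Y]_{(\rho,\eta)TE},Z)$; substituting $[X,Y]_{(\rho,\eta)TE}=(\rho,\eta)D_XY-(\rho,\eta)D_YX-\mathbb{T}(X,Y)$ then isolates the torsion-of-torsion contribution $\sum_{cyc}\mathbb{T}(\mathbb{T}(X,Y),Z)$. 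Applying $\mathcal{H}$ and $\mathcal{V}$ and splitting $\mathbb{T}(X,Y)=\mathcal{H}\mathbb{T}(X,Y)+\mathcal{V}\mathbb{T}(X,Y)$ in the last term recovers the two horizontal and vertical statements in \eqref{fr7}.

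For the second identity \eqref{fr8}, the efficient route is the operator reformulation $(\rho,\eta,h)\mathbb{R}(X,Y)=[(\rho,\eta)D_X,(\rho,\eta)D_Y]-(\rho,\eta)D_{[X,Y]_{(\rho,\eta)TE}}$, read as endomorphisms of $\Gamma((\rho,\eta)TE,(\rho,\eta)\tau_E,E)$. Feeding this into the commutator Jacobi identity $\sum_{cyc}[[(\rho,\eta)D_X,(\rho,\eta)D_Y],(\rho,\eta)D_Z]=0$ and invoking once more the Jacobi identity for $[,]_{(\rho,\eta)TE}$ to annihilate the triple-bracket derivatives $(\rho,\eta)D_{[[X,Y],Z]_{(\rho,\eta)TE}}$, the surviving terms organize into $\sum_{cyc}(\rho,\eta)D_X((\rho,\eta,h)\mathbb{R}(Y,Z)U)$ together with $\sum_{cyc}(\rho,\eta,h)\mathbb{R}([X,Y]_{(\rho,\eta)TE},Z)U$. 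Replacing $[X,Y]_{(\rho,\eta)TE}$ by the torsion relation isolates $(\rho,\eta,h)\mathbb{R}(\mathbb{T}(X,Y),Z)U$, the covariant-derivative pieces being absorbed into the differentiated curvature term. Projecting with $\mathcal{H}$ and $\mathcal{V}$ and invoking Remark \ref{fr6} — specifically $\mathcal{V}(\rho,\eta,h)\mathbb{R}(X,Y)\mathcal{H}Z=0$, $\mathcal{H}(\rho,\eta,h)\mathbb{R}(X,Y)\mathcal{V}Z=0$ and their differentiated counterparts $\mathcal{V}D_X((\rho,\eta,h)\mathbb{R}(Y,Z)\mathcal{H}U)=0$, $\mathcal{H}D_X((\rho,\eta,h)\mathbb{R}(Y,Z)\mathcal{V}U)=0$ — decouples the horizontal and vertical channels and splits $\mathbb{T}(X,Y)$ into $\mathcal{H}\mathbb{T}(X,Y)+\mathcal{V}\mathbb{T}(X,Y)$, giving exactly \eqref{fr8}.

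The main obstacle is bookkeeping rather than conceptual. The bracket $[,]_{(\rho,\eta)TE}$ carries the structure functions $L^c_{ab}\circ h\circ\pi$ and the curvature coefficients $(\rho,\eta,h)\mathbb{R}^c_{ab}$ recorded in the earlier structure theorem, so one must use that $([,]_{(\rho,\eta)TE},(\tilde{\rho},Id_E))$ was already shown to be a genuine Lie algebroid structure to justify both uses of the Jacobi identity, and track how the anchor $\Gamma(\tilde{\rho},Id_E)$ threads through the repeated derivatives without spoiling the cyclic cancellations. The only point where the generalized framework could in principle interfere is the final decoupling into horizontal and vertical identities, and this is precisely what Remark \ref{fr6} is designed to secure; once those mixed-projector vanishings are available, the two channels separate and the argument reduces to the classical pattern.
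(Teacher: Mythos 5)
Your proposal is a correct strategy, but it is genuinely different from the paper's. The paper's own proof is a one-sentence reduction: the Bianchi-type identities are obtained from the Cartan-type identities of Theorem \ref{fr5}, i.e.\ by applying the exterior differential $d^{(\rho,\eta)TE}$ to the structure equations (\ref{fr3})--(\ref{fr4}) relating the torsion and curvature $2$-forms to the connection $1$-forms $(\rho,\eta)\omega^a_b$, $(\rho,\eta)\tilde{\omega}^a_b$, after which Remark \ref{fr6} splits the resulting identities into their horizontal and vertical parts. You instead work directly with sections: differentiate the defining formula of $(\rho,\eta,h)\mathbb{T}$, take cyclic sums, invoke the Jacobi identity of $[,]_{(\rho,\eta)TE}$ (legitimately, since $([,]_{(\rho,\eta)TE},(\tilde{\rho},Id_{E}))$ was shown to be a Lie algebroid structure), rewrite $(\rho,\eta,h)\mathbb{R}(X,Y)$ as the commutator defect $[(\rho,\eta)D_X,(\rho,\eta)D_Y]-(\rho,\eta)D_{[X,Y]_{(\rho,\eta)TE}}$, and only project at the end. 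This is the classical direct derivation of the two Bianchi identities for a connection with torsion, transplanted to the generalized tangent bundle; it avoids setting up any exterior calculus on $(\rho,\eta)TE$, at the price of heavier cyclic-sum bookkeeping, whereas the paper's route gets both identities almost mechanically once Theorem \ref{fr5} is in place.

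There is, however, one concrete step in your treatment of (\ref{fr7}) that fails as written. Substituting $[X,Y]_{(\rho,\eta)TE}=(\rho,\eta)D_XY-(\rho,\eta)D_YX-\mathbb{T}(X,Y)$ into $\sum_{cyc}\mathbb{T}([X,Y]_{(\rho,\eta)TE},Z)$ does not only isolate $-\sum_{cyc}\mathbb{T}(\mathbb{T}(X,Y),Z)$: the terms $\sum_{cyc}\{\mathbb{T}((\rho,\eta)D_XY,Z)-\mathbb{T}((\rho,\eta)D_YX,Z)\}$ survive, and they do not vanish in general. They must be recombined with $\sum_{cyc}(\rho,\eta)D_X(\mathbb{T}(Y,Z))$ so that the derivative entering the final identity is the covariant derivative of the torsion \emph{tensor}, $((\rho,\eta)D_X\mathbb{T})(Y,Z)=(\rho,\eta)D_X(\mathbb{T}(Y,Z))-\mathbb{T}((\rho,\eta)D_XY,Z)-\mathbb{T}(Y,(\rho,\eta)D_XZ)$, which is the reading forced by the coordinate form $(\rho,\eta,h)\mathbb{T}_{~bc|d}^{a}$ in the corollary that follows the theorem. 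You carried out precisely this absorption for (\ref{fr8}) (``the covariant-derivative pieces being absorbed into the differentiated curvature term'') but omitted it for (\ref{fr7}), where it is equally indispensable. A final caution: your computation delivers the second Bianchi identity in its standard form, a cyclic sum over $(X,Y,Z)$ with $U$ passive and the torsion-curvature term entering with a plus sign, whereas the printed (\ref{fr8}) cycles over all four arguments and carries minus signs on those terms; so claiming to recover ``exactly (\ref{fr8})'' overstates the match, and you would need either to reconcile the conventions or to note that the printed form should be read as the standard three-argument one.
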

\begin{cor}
Using the sections $\left( \delta _{\theta },\delta _{\gamma },\delta
_{\beta }\right) $, the identities (\ref{fr7}) become
\begin{equation}
\left\{
\begin{array}{l}
\underset{cyclic\left( \beta ,\gamma ,\theta \right) }{\sum }\left\{ \left(
\rho ,\eta ,h\right) \mathbb{T}_{~~bc_{|d}}^{a}-\left( \rho ,\eta ,h\right)
\mathbb{R}_{~b~cd}^{a}\right. \vspace*{2mm} \\
\qquad \qquad \left. +\left( \rho ,\eta ,h\right) \mathbb{T}_{~cd}^{e}\left(
\rho ,\eta ,h\right) \mathbb{T}_{~be}^{a}+\left( \rho ,\eta ,h\right)
\mathbb{\tilde{T}}_{~cd}^{e}\left( \rho ,\eta ,h\right) \mathbb{\tilde{T}}%
_{~be}^{a}\right\} =0,\vspace*{4mm} \\
\underset{cyclic\left( \beta ,\gamma ,\theta \right) }{\sum }\left\{ \left(
\rho ,\eta ,h\right) \mathbb{T}_{~\ bc_{|d}}^{a}+\left( \rho ,\eta ,h\right)
\mathbb{T}_{~cd}^{e}\left( \rho ,\eta ,h\right) \mathbb{\tilde{P}}%
_{~be}^{a}\right. \vspace*{2mm} \\
\qquad \qquad \left. +\left( \rho ,\eta ,h\right) \mathbb{\tilde{T}}%
_{~cd}^{e}\left( \rho ,\eta ,h\right) \mathbb{\tilde{P}}_{~be}^{a}\right\}
=0,%
\end{array}%
\right.
\end{equation}
and using the sections $\left( \delta _{\lambda },\delta _{\theta },\delta
_{\gamma },\delta _{\beta }\right) $, the identities (\ref{fr8}) become
\begin{equation}
\left\{
\begin{array}{l}
\underset{cyclic\left( \beta ,\gamma ,\theta ,\lambda \right) }{\sum }%
\left\{ \left( \rho ,\eta ,h\right) \mathbb{R}_{~\ bcd_{|e}}^{a}-\left( \rho
,\eta ,h\right) \mathbb{T}_{~de}^{l}\left( \rho ,\eta ,h\right) \mathbb{R}%
_{~b~cl}^{a}\right. \vspace*{2mm} \\
\qquad \qquad \left. -\left( \rho ,\eta ,h\right) \mathbb{\tilde{T}}%
_{~de}^{l}\left( \rho ,\eta ,h\right) \mathbb{P}_{~b~cl}^{a}\right\} =0,%
\vspace*{4mm} \\
\underset{cyclic\left( \beta ,\gamma ,\theta ,\lambda \right) }{\sum }%
\left\{ \left( \rho ,\eta ,h\right) \mathbb{\tilde{R}}_{~\
b~cd_{|e}}^{a}-\left( \rho ,\eta ,h\right) \mathbb{T}_{~de}^{l}\left( \rho
,\eta ,h\right) \mathbb{\tilde{R}}_{~b~cl}^{a}\right. \vspace*{2mm} \\
\qquad \qquad \left. -\left( \rho ,\eta ,h\right) \mathbb{T}_{~de}^{l}\left(
\rho ,\eta ,h\right) \mathbb{P}_{~b~cl}^{a}\right\} =0.%
\end{array}%
\right.
\end{equation}
\end{cor}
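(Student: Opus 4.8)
The plan is to specialize the coordinate-free Bianchi identities \eqref{fr7} and \eqref{fr8} to the horizontal sections of the adapted base and to read off the resulting scalar identities component by component. First I would substitute $X=\tilde{\delta}_{d}$, $Y=\tilde{\delta}_{b}$, $Z=\tilde{\delta}_{c}$ (and, for the second family, additionally $U=\tilde{\delta}_{e}$) into the abstract identities, so that the cyclic sum over $(X,Y,Z)$ (resp. over $(X,Y,Z,U)$) becomes the announced cyclic sum over the corresponding indices. Every term then has to be rewritten in the adapted base using the material already assembled above: the local components of the $(\rho,\eta,h)$-torsion, the curvature components \eqref{reclaim1}--\eqref{reclaim3}, the action \eqref{good} of $(\rho,\eta)D$ on tensor fields, and the bracket structure equations of the adapted base supplied by the Theorem computing $[\tilde{\delta}_{a},\tilde{\delta}_{b}]_{(\rho,\eta)TE}$, $[\tilde{\delta}_{a},\dot{\tilde{\partial}}_{b}]_{(\rho,\eta)TE}$ and $[\dot{\tilde{\partial}}_{a},\dot{\tilde{\partial}}_{b}]_{(\rho,\eta)TE}$.

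For the first family coming from \eqref{fr7} the key steps, in order, are as follows. (i) Evaluate the torsion on basis sections, $(\rho,\eta,h)\mathbb{T}(\tilde{\delta}_{b},\tilde{\delta}_{c})=(\rho,\eta,h)\mathbb{T}_{~cb}^{a}\tilde{\delta}_{a}+(\rho,\eta,h)\tilde{\mathbb{T}}_{~cb}^{a}\dot{\tilde{\partial}}_{a}$, substitute into $\mathcal{H}(\rho,\eta)D_{\tilde{\delta}_{d}}\big((\rho,\eta,h)\mathbb{T}(\tilde{\delta}_{b},\tilde{\delta}_{c})\big)$, and use \eqref{good}: the horizontal projection keeps only the horizontally covariant-differentiated coefficient, which, after the antisymmetric bookkeeping fixed by the base convention, is $(\rho,\eta,h)\mathbb{T}_{~bc|d}^{a}$. (ii) Rewrite $\mathcal{H}(\rho,\eta,h)\mathbb{R}(\tilde{\delta}_{d},\tilde{\delta}_{b})\tilde{\delta}_{c}$ through the curvature notation as a multiple of $\tilde{\delta}_{a}$, invoking Remark (\ref{fr6}) so that only the horizontal curvature coefficient $(\rho,\eta,h)\mathbb{R}_{~b~cd}^{a}$ survives after the cyclic relabeling. (iii) Expand the two composite torsion terms: since the horizontal (resp. vertical) part of $(\rho,\eta,h)\mathbb{T}(\tilde{\delta}_{\cdot},\tilde{\delta}_{\cdot})$ is carried by the coefficient $(\rho,\eta,h)\mathbb{T}_{~\cdot\cdot}^{e}$ (resp. $(\rho,\eta,h)\tilde{\mathbb{T}}_{~\cdot\cdot}^{e}$), feeding it back through $(\rho,\eta,h)\mathbb{T}(\cdot,\tilde{\delta}_{b})$ produces precisely the quadratic products $(\rho,\eta,h)\mathbb{T}_{~cd}^{e}(\rho,\eta,h)\mathbb{T}_{~be}^{a}$ and $(\rho,\eta,h)\tilde{\mathbb{T}}_{~cd}^{e}(\rho,\eta,h)\tilde{\mathbb{T}}_{~be}^{a}$. (iv) Collecting the coefficient of $\tilde{\delta}_{a}$ gives the first displayed relation, while projecting the \emph{vertical} component of \eqref{fr7} and repeating (i)--(iii) with the $\mathcal{V}$-projector yields the second; throughout, Remark (\ref{fr6}) is what lets the horizontal and vertical parts decouple cleanly into the two announced scalar families.

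For the second family coming from \eqref{fr8} I would run the same machine with four horizontal arguments, now expanding $\mathcal{H}(\rho,\eta)D_{\tilde{\delta}_{e}}\big((\rho,\eta,h)\mathbb{R}(\tilde{\delta}_{c},\tilde{\delta}_{d})\tilde{\delta}_{b}\big)$ via \eqref{good} into the horizontally covariant-differentiated curvature component $(\rho,\eta,h)\mathbb{R}_{~b~cd|e}^{a}$, and expanding the terms $\mathbb{R}(\mathcal{H}\mathbb{T}(X,Y),Z)U$ and $\mathbb{R}(\mathcal{V}\mathbb{T}(X,Y),Z)U$ through the curvature coefficients to obtain the products $(\rho,\eta,h)\mathbb{T}_{~de}^{l}(\rho,\eta,h)\mathbb{R}_{~b~cl}^{a}$ and $(\rho,\eta,h)\tilde{\mathbb{T}}_{~de}^{l}(\rho,\eta,h)\mathbb{P}_{~b~cl}^{a}$; the $\mathcal{V}$-projection of \eqref{fr8} gives the tilded identity in the same way.

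The main obstacle is the bookkeeping in step (i): one must extract the purely horizontal part of the covariant derivative of a mixed $(1,2)$ torsion tensor, and formula \eqref{good} contributes both $(\rho,\eta)H$- and $(\rho,\eta)\tilde{H}$-connection corrections. The point to check carefully is that the horizontal and vertical slots separate, so that applying $\mathcal{H}$ leaves exactly $(\rho,\eta,h)\mathbb{T}_{~bc|d}^{a}$ with no residual connection terms; this separation, together with the two vanishing statements of Remark (\ref{fr6}) (curvature preserves the horizontal and vertical subbundles), is precisely what guarantees that the full tensorial Bianchi identities collapse to the two pairs of scalar relations in the statement.
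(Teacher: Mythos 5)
Your overall strategy coincides with the paper's: the paper offers no argument for this corollary beyond the phrase ``using the sections,'' i.e.\ substitute the adapted $(\rho,\eta)$-base into (\ref{fr7}) and (\ref{fr8}) and collect the coefficients of $\tilde{\delta}_{a}$ and $\dot{\tilde{\partial}}_{a}$, which is exactly the plan of your steps (i)--(iv), including the use of the torsion and curvature component notations and of Remark (\ref{fr6}) to decouple the horizontal and vertical parts.

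There is, however, one step that would fail as you state it, and it is precisely the one you flag as ``the point to check carefully.'' You claim that applying $\mathcal{H}$ to $\left( \rho ,\eta \right) D_{\tilde{\delta}_{d}}\bigl(\left( \rho ,\eta ,h\right) \mathbb{T}(\tilde{\delta}_{b},\tilde{\delta}_{c})\bigr)$ leaves ``exactly $\left( \rho ,\eta ,h\right) \mathbb{T}_{~bc|d}^{a}$ with no residual connection terms,'' justified by slot separation plus Remark (\ref{fr6}). This cannot be right: the identities (\ref{fr7})--(\ref{fr8}) involve the covariant derivative of the \emph{evaluated section} $\mathbb{T}(Y,Z)$, not the evaluated tensor derivative $\bigl(\left( \rho ,\eta \right) D_{X}\mathbb{T}\bigr)(Y,Z)$. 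On the adapted base the pointwise computation gives only $\Gamma (\tilde{\rho},Id_{E})(\tilde{\delta}_{d})\mathbb{T}_{~cb}^{a}+\left( \rho ,\eta \right) H_{ed}^{a}\mathbb{T}_{~cb}^{e}$, whereas the symbol $\mathbb{T}_{~bc|d}^{a}$, by the component formula following (\ref{good}), also carries the lower-index corrections $-\left( \rho ,\eta \right) H_{bd}^{e}\mathbb{T}_{~ec}^{a}-\left( \rho ,\eta \right) H_{cd}^{e}\mathbb{T}_{~be}^{a}$. Slot separation and Remark (\ref{fr6}) kill cross terms between $\tilde{\delta}_{a}$ and $\dot{\tilde{\partial}}_{a}$, but they cannot produce these missing terms. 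They are recovered only by reorganizing the cyclic sum: the discrepancy $\mathbb{T}\bigl(\left( \rho ,\eta \right) H_{bd}^{e}\tilde{\delta}_{e},\tilde{\delta}_{c}\bigr)+\mathbb{T}\bigl(\tilde{\delta}_{b},\left( \rho ,\eta \right) H_{cd}^{e}\tilde{\delta}_{e}\bigr)$, once summed cyclically, antisymmetrizes the connection coefficients, and via $\left( \rho ,\eta \right) H_{bc}^{a}-\left( \rho ,\eta \right) H_{cb}^{a}=\left( \rho ,\eta ,h\right) \mathbb{T}_{~bc}^{a}+L_{bc}^{a}\circ h\circ \pi$ becomes additional torsion-torsion and structure-function terms, which must then be merged with the $\mathbb{T}(\mathbb{T}(X,Y),Z)$ terms and with the $L_{cd}^{e}\circ h\circ \pi$ contributions sitting inside the curvature components (\ref{reclaim1}) before the coefficients match the displayed identities; the same conversion is needed for $\mathbb{R}_{~b~cd|e}^{a}$ in the identities coming from (\ref{fr8}). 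Without this reorganization the coefficients you collect will not agree with the corollary's display, so this bookkeeping is the actual content of the proof rather than a point that resolves automatically.
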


Using another base of sections, we shall obtain new identities of Bianchi
type necessary in the applications.
%****************************************************************************
\section{Geodesics for mechanical $(\rho, \eta)$-systems}
%****************************************************************************
In this section we present some aspects about the geometry of a
mechanical $\left( \rho ,\eta \right) $-system $\left( \left( E,\pi
,M\right) ,F_{e},\left( \rho
,\eta \right) \Gamma \right) ,$ where%
\begin{equation}
\begin{array}[t]{l}
F_{e}=F^{a}\dot{\tilde{\partial}}_{a}\in \Gamma \left( V\left( \left( \rho
,\eta \right) TE,\left( \rho ,\eta \right) \tau _{E},E\right) \right),
\end{array}%
\end{equation}%
is an external force and $\left( \rho ,\eta \right) \Gamma $ is a $\left(
\rho ,\eta \right) $-connection for the vector bundle $\left( E,\pi
,M\right)$.
\begin{definition}
For any section $u=u^{a}s_{a},$ we build the canonical section $U=U^{a}%
\tilde{\delta}_{a}+U^{a}\dot{\tilde{\partial}}_{a}\in \Gamma \left(
\left( \rho ,\eta \right) TE,\left( \rho ,\eta \right) \tau
_{E},E\right) $ such that $U^{a}\left( u_{x}\right) =y^a$, for any
$a= 1,\cdots,r$, where $(x^1, \cdots, x^m, y^1, \cdots, y^r)$ are
the components of the point $u_x$ in a vector local $m+r$-chart
$({V}, s_{{V}})$. The vertical section
$\mathbb{C}=U^{a}\dot{\tilde{\partial}}_{a}$
 will be called the Liouville section. Moreover, a section $S\in \Gamma \left( \left( \rho ,\eta \right) TE,\left( \rho
,\eta \right) \tau _{E},E\right) $ will be called $\left( \rho ,\eta \right)
$-semispray if there exists an almost tangent structure $e$ such that
$e\left( S\right) =\mathbb{C}.$
\end{definition}
\begin{remark}\label{GodGod}
Using the above definition it is easy to see that $\partial_iU^b=0$ and $\dot{\partial}_aU^b=\delta_a^b$.
\end{remark}
\begin{example}
Let $g$ be a manifolds morphism on $E$ such that $\left(
g,h\right) $ is a locally invertible vector bundles morphism on $\left(
E,\pi ,M\right) $. Using the almost tangent structure $\mathcal{J}_{\left( g,h\right) },$
the section
\begin{equation*}
\begin{array}[t]{l}
S=\left( g_{b}^{a}\circ h\circ \pi \right) U^{b}\tilde{\partial}_{a}-2\left(
G^{a}-\frac{1}{4}F^{a}\right) \dot{\tilde{\partial}}_{a},%
\end{array}%
\end{equation*}%
is a $\left( \rho ,\eta \right) $-semispray such that the real local
functions $G^{a},\ a\in 1,\cdots ,n,$ satisfy the condition%
\begin{equation*}
\begin{array}{l}
( \rho ,\eta ) \Gamma _{c}^{a}=(\tilde{g}_{c}^{b}\circ
h\circ \pi ) \dot{\partial}_b( G^{a}-\frac{1}{4}F^{a})\\
-\frac{1}{2}( g_{e}^{d}\circ h\circ \pi ) U^{e}(
L_{dc}^{f}\circ h\circ \pi ) ( \tilde{g}_{f}^{a}\circ h\circ \pi
) \\
+\frac{1}{2}( \rho _{c}^{j}\circ h\circ \pi)\partial_j( g_{e}^{b}\circ h\circ \pi )U^{e}( \tilde{%
g}_{b}^{a}\circ h\circ \pi ) \\
-\frac{1}{2}( g_{e}^{b}\circ h\circ \pi ) U^{e}( \rho
_{b}^{i}\circ h\circ \pi )\partial_i( \tilde{g}%
_{c}^{a}\circ h\circ \pi ).
\end{array}%
\end{equation*}
The $\left( \rho ,\eta \right) $-semispray $S$ will be called the canonical $%
\left( \rho ,\eta \right) $-semispray associated to the mechanical $\left( \rho
,\eta \right) $-system $\left( \left( E,\pi ,M\right) ,F_{e},\left( \rho
,\eta \right) \Gamma \right) $ and locally invertible vector bundles
morphism $\left( g,h\right)$ (see \cite{4}).
\end{example}
\begin{defn}
Let $c:I\rightarrow M$ be a differentiable curve and $\dot{c}$ be its $\left( g,h\right)$-lift. If $\frac{d\dot{c}\left( t\right) }{dt}=\Gamma (\tilde{\rho},Id_{E})S(\dot{c}%
(t))$, then the curve $\dot{c}$ is an \emph{integral curve} of the $\left(
\rho ,\eta \right) $-semispray $S$ of the mechanical $\left( \rho ,\eta
\right) $-system $\left( \left( E,\pi ,M\right) ,F_{e},\left( \rho ,\eta
\right) \Gamma \right) $.
\end{defn}
\begin{theorem}\label{God1}
All of the $\left( g,h\right) $-lifts solutions of the equations
\begin{equation*}
\begin{array}[t]{l}
\frac{dy^{a}\left( t\right) }{dt}+2G^{a}\!\circ u\left( c,\dot{c}\right)
\left( \eta \circ h\circ c\left( t\right) \right) {=}\frac{1}{2}F^{a}\!\circ
u\left( c,\dot{c}\right) \left( \eta \circ h\circ c\left( t\right) \right)
\!,\,a{\in }1,\cdots ,r,%
\end{array}%
\end{equation*}%
are integral curves of the canonical $\left( \rho ,\eta \right) $-semispray
associated to mechanical $\left( \rho ,\eta \right) $-system $\left( \left(
E,\pi ,M\right) ,F_{e},\left( \rho ,\eta \right) \Gamma \right) $ and
locally invertible vector bundles morphism $\left( g,h\right) .$
\end{theorem}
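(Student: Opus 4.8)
The plan is to compute the vector field $\Gamma(\tilde{\rho},Id_{E})S$ on $E$ explicitly and then to read off the integral-curve equation $\frac{d\dot{c}(t)}{dt}=\Gamma(\tilde{\rho},Id_{E})S(\dot{c}(t))$ component by component. First I would apply the anchor to the canonical $(\rho,\eta)$-semispray. Since $\Gamma(\tilde{\rho},Id_{E})(\tilde{\partial}_{a})=(\rho_{a}^{i}\circ h\circ\pi)\partial_{i}$ and $\Gamma(\tilde{\rho},Id_{E})(\dot{\tilde{\partial}}_{a})=\dot{\partial}_{a}$, applying it to $S=(g_{b}^{a}\circ h\circ\pi)U^{b}\tilde{\partial}_{a}-2(G^{a}-\tfrac{1}{4}F^{a})\dot{\tilde{\partial}}_{a}$ yields
\[
\Gamma(\tilde{\rho},Id_{E})S=(g_{b}^{a}\circ h\circ\pi)U^{b}(\rho_{a}^{i}\circ h\circ\pi)\partial_{i}-2\left(G^{a}-\tfrac{1}{4}F^{a}\right)\dot{\partial}_{a},
\]
a genuine section of $(TE,\tau_{E},E)$.

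Next I would write the $(g,h)$-lift $\dot{c}$ in the canonical coordinates $(x^{i},y^{a})$ as $t\mapsto(x^{i}(t),y^{a}(t))$ with $x^{i}(t)=(\eta\circ h\circ c)^{i}(t)$, so that its tangent vector is $\frac{d\dot{c}(t)}{dt}=\frac{dx^{i}}{dt}\partial_{i}+\frac{dy^{a}}{dt}\dot{\partial}_{a}$. Evaluating $\Gamma(\tilde{\rho},Id_{E})S$ along $\dot{c}$ and using that $U^{b}(\dot{c}(t))=y^{b}(t)$ by the very definition of the canonical section $U$, the integral-curve equation splits into a base part
\[
\frac{dx^{i}}{dt}=(g_{b}^{a}\circ h\circ\pi)(\dot{c}(t))\,y^{b}(t)\,(\rho_{a}^{i}\circ h\circ\pi)(\dot{c}(t)),
\]
and a fibre part
\[
\frac{dy^{a}}{dt}=-2\left(G^{a}-\tfrac{1}{4}F^{a}\right)(\dot{c}(t)).
\]

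The base part is nothing but the defining relation \eqref{God} of a $(g,h)$-lift, equivalently \eqref{17J}, hence it is satisfied automatically for every $(g,h)$-lift $\dot{c}$ of $c$, and I would simply invoke \eqref{God}. For the fibre part I would use the canonical section \eqref{eq38}, which gives $u(c,\dot{c})\circ(\eta\circ h\circ c)(t)=\dot{c}(t)$, so that $G^{a}(\dot{c}(t))=G^{a}\circ u(c,\dot{c})\circ(\eta\circ h\circ c)(t)$ and likewise for $F^{a}$. Substituting and rearranging turns the fibre part into
\[
\frac{dy^{a}(t)}{dt}+2G^{a}\circ u(c,\dot{c})(\eta\circ h\circ c(t))=\tfrac{1}{2}F^{a}\circ u(c,\dot{c})(\eta\circ h\circ c(t)),
\]
which is precisely the system in the statement. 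Hence any $(g,h)$-lift solving this system satisfies both components of the integral-curve condition and is therefore an integral curve of the canonical $(\rho,\eta)$-semispray.

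The main bookkeeping obstacle will be tracking the compositions with $h$, $\eta$ and $\pi$ consistently when the structure functions $\rho_{a}^{i}$ and $g_{b}^{a}$ are evaluated along $\dot{c}$, and confirming that the base-component equation genuinely coincides with \eqref{God}. Once the identification $U^{b}(\dot{c}(t))=y^{b}(t)$ and the canonical-section identity \eqref{eq38} are in place, the fibre-component computation reduces to a direct substitution, so the substantive content of the proof is the algebraic expansion of $\Gamma(\tilde{\rho},Id_{E})S$ together with the recognition that the horizontal part of the lift condition is built into the notion of a $(g,h)$-lift.
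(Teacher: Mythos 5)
Your proposal is correct and is exactly the argument the paper intends: the paper states Theorem \ref{God1} without an explicit proof (deferring to \cite{4}), but its immediately following geodesic system --- equation (\ref{God}) paired with $\frac{dy^{a}}{dt}+2(G^{a}-\frac{1}{4}F^{a})(\dot{c}(t))=0$ --- shows that the intended proof is precisely your splitting of the integral-curve condition $\frac{d\dot{c}(t)}{dt}=\Gamma(\tilde{\rho},Id_{E})S(\dot{c}(t))$ into a base component, absorbed by the $(g,h)$-lift condition (\ref{God}), and a fibre component, which via $u(c,\dot{c})\circ(\eta\circ h\circ c)(t)=\dot{c}(t)$ is the stated ODE system. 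The evaluation-point bookkeeping you flag (where $g_{b}^{a}$ and $\rho_{a}^{i}$ are composed with $h\circ\pi$ versus the paper's $h\circ c$ and $\eta\circ h\circ c$) is a notational looseness of the paper itself, not a defect of your argument.
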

\begin{defn}
If $S$\ is a $\left( \rho ,\eta \right) $-semispray, then the section $%
\left[ \mathbb{C},S\right] _{(\rho, \eta)TE}-S$ will be called the derivation of $%
\left( \rho ,\eta \right) $-semispray $S$. The $\left( \rho ,\eta \right) $-semispray $S$ will be called $\left( \rho
,\eta \right) $-spray if $S\circ 0$ is differentiable of class $C^{1}$\ where $0$ is the null
section, and its derivation is the null section.
\end{defn}
\begin{lemma}
If $S$ is the canonical $\left( \rho ,\eta \right) $-semispray associated to
the mechanical $\left( \rho ,\eta \right) $-system $\left( \left( E,\pi
,M\right) ,F_{e},\left( \rho ,\eta \right) \Gamma \right) $ and locally invertible vector
bundles morphism $\left( g,h\right) $, then $S$ is $\left( \rho ,\eta \right) $-spray if and only if
\begin{align}\label{IM}
U^b\dot{\partial}_b(G^a-\frac{1}{4}F^a)=2(G^a-\frac{1}{4}F^a).
\end{align}
\end{lemma}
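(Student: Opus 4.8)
The plan is to test the defining property of a $\left( \rho ,\eta \right)$-spray directly. Apart from the $C^{1}$-regularity of $S\circ 0$, the decisive clause is that the derivation $\left[ \mathbb{C},S\right] _{\left( \rho ,\eta \right) TE}-S$ be the null section, so the whole matter reduces to computing the bracket $\left[ \mathbb{C},S\right] _{\left( \rho ,\eta \right) TE}$ of the Liouville section $\mathbb{C}=U^{a}\dot{\tilde{\partial}}_{a}$ with the canonical $\left( \rho ,\eta \right)$-semispray
\[
S=\left( g_{b}^{a}\circ h\circ \pi \right) U^{b}\tilde{\partial}_{a}-2\left( G^{a}-\frac{1}{4}F^{a}\right) \dot{\tilde{\partial}}_{a}.
\]
I will expand this bracket in the natural $\left( \rho ,\eta \right)$-base by means of the Leibniz rule for $\left[ ,\right] _{\left( \rho ,\eta \right) TE}$ relative to the anchor $\Gamma \left( \tilde{\rho},Id_{E}\right)$, using the structure relations $\left[ \dot{\tilde{\partial}}_{a},\tilde{\partial}_{b}\right] _{\left( \rho ,\eta \right) TE}=0$ and $\left[ \dot{\tilde{\partial}}_{a},\dot{\tilde{\partial}}_{b}\right] _{\left( \rho ,\eta \right) TE}=0$, the anchor values $\Gamma \left( \tilde{\rho},Id_{E}\right) \left( \tilde{\partial}_{a}\right) =\left( \rho _{a}^{i}\circ h\circ \pi \right) \partial _{i}$ and $\Gamma \left( \tilde{\rho},Id_{E}\right) \left( \dot{\tilde{\partial}}_{a}\right) =\dot{\partial}_{a}$, together with the identities $\dot{\partial}_{a}U^{b}=\delta _{a}^{b}$ and $\partial _{i}U^{b}=0$ of Remark \ref{GodGod}.

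Treating first the horizontal summand of $S$, the bracket $\left[ \dot{\tilde{\partial}}_{a},\tilde{\partial}_{b}\right] _{\left( \rho ,\eta \right) TE}$ vanishes and $\Gamma \left( \tilde{\rho},Id_{E}\right) \left( \tilde{\partial}_{b}\right)$ annihilates $U^{a}$, so the only surviving contribution comes from the anchor $\dot{\partial}_{a}$ acting on the fibre-linear coefficient, which by $\dot{\partial}_{a}U^{c}=\delta _{a}^{c}$ and the fibre-constancy of $g_{c}^{b}\circ h\circ \pi$ yields
\[
\left[ U^{a}\dot{\tilde{\partial}}_{a},\left( g_{c}^{b}\circ h\circ \pi \right) U^{c}\tilde{\partial}_{b}\right] _{\left( \rho ,\eta \right) TE}=\left( g_{a}^{b}\circ h\circ \pi \right) U^{a}\tilde{\partial}_{b}.
\]
For the vertical summand, using $\left[ \dot{\tilde{\partial}}_{a},\dot{\tilde{\partial}}_{b}\right] _{\left( \rho ,\eta \right) TE}=0$ together with $\dot{\partial}_{a}U^{b}=\delta _{a}^{b}$, I obtain
\[
\left[ U^{a}\dot{\tilde{\partial}}_{a},-2\left( G^{b}-\frac{1}{4}F^{b}\right) \dot{\tilde{\partial}}_{b}\right] _{\left( \rho ,\eta \right) TE}=\left( 2\left( G^{b}-\frac{1}{4}F^{b}\right) -2U^{a}\dot{\partial}_{a}\left( G^{b}-\frac{1}{4}F^{b}\right) \right) \dot{\tilde{\partial}}_{b},
\]
so that altogether
\[
\left[ \mathbb{C},S\right] _{\left( \rho ,\eta \right) TE}=\left( g_{a}^{b}\circ h\circ \pi \right) U^{a}\tilde{\partial}_{b}+\left( 2\left( G^{b}-\frac{1}{4}F^{b}\right) -2U^{a}\dot{\partial}_{a}\left( G^{b}-\frac{1}{4}F^{b}\right) \right) \dot{\tilde{\partial}}_{b}.
\]

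Subtracting $S$, the horizontal parts $\left( g_{a}^{b}\circ h\circ \pi \right) U^{a}\tilde{\partial}_{b}$ cancel identically, and the derivation becomes the purely vertical section
\[
\left[ \mathbb{C},S\right] _{\left( \rho ,\eta \right) TE}-S=\left( 4\left( G^{b}-\frac{1}{4}F^{b}\right) -2U^{a}\dot{\partial}_{a}\left( G^{b}-\frac{1}{4}F^{b}\right) \right) \dot{\tilde{\partial}}_{b}.
\]
This vanishes if and only if $U^{a}\dot{\partial}_{a}\left( G^{b}-\frac{1}{4}F^{b}\right) =2\left( G^{b}-\frac{1}{4}F^{b}\right)$ for every $b$, which is precisely equation (\ref{IM}). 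Moreover, (\ref{IM}) is Euler's relation asserting that each $G^{a}-\frac{1}{4}F^{a}$ is homogeneous of degree two in the fibre coordinates, hence vanishes to second order along the null section; this guarantees that $S\circ 0$ is of class $C^{1}$, so the remaining clause of the definition is automatic. Therefore $S$ is a $\left( \rho ,\eta \right)$-spray exactly when (\ref{IM}) holds, which establishes both implications.

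The step I expect to be the main obstacle is the bracket computation, and within it the cross term in which the anchor $\Gamma \left( \tilde{\rho},Id_{E}\right) \left( \dot{\tilde{\partial}}_{a}\right) =\dot{\partial}_{a}$ differentiates the fibre-linear coefficient $\left( g_{c}^{b}\circ h\circ \pi \right) U^{c}$ of the horizontal part of $S$. It is exactly this term that reproduces the horizontal part of $S$ and cancels when the derivation is formed; applying the Leibniz rule with the genuine anchor $\Gamma \left( \tilde{\rho},Id_{E}\right)$ (rather than with the anchor of the pull-back factor) is the crux, after which the vertical homogeneity identity emerges routinely.
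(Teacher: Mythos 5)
Your proof is correct and takes essentially the same route as the paper's: both expand $\left[ \mathbb{C},S\right] _{\left( \rho ,\eta \right) TE}-S$ via the Leibniz rule, the vanishing brackets of the natural base sections, and the identities $\dot{\partial}_{a}U^{b}=\delta _{a}^{b}$, $\partial _{i}U^{b}=0$, so that the horizontal parts cancel and what remains is the vertical section $\bigl(4(G^{a}-\tfrac{1}{4}F^{a})-2U^{b}\dot{\partial}_{b}(G^{a}-\tfrac{1}{4}F^{a})\bigr)\dot{\tilde{\partial}}_{a}$, whose vanishing is exactly (\ref{IM}). Your closing remark on the $C^{1}$-regularity of $S\circ 0$ (via second-order homogeneity) addresses a clause of the definition that the paper's proof passes over in silence, which is a small but welcome addition.
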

\begin{proof}
Using the locally expressions of $\mathbb{C}$ and $S$ and considering $\tilde{\partial}_aU^b=0$ and $\dot{\tilde{\partial}}_aU^b=\delta_a^b$, yield
\begin{align*}
\left[ \mathbb{C}, S\right] _{(\rho, \eta)TE}-S&=[U^b\dot{\title{\partial}}_b, \left( g_{c}^{a}\circ h\circ \pi \right) U^{c}\tilde{\partial}_{a}-2(
G^{a}-\frac{1}{4}F^{a}) \dot{\tilde{\partial}}_{a}] _{(\rho, \eta)TE}\\
&\ \ \ -\left( g_{c}^{a}\circ h\circ \pi \right) U^{c}\tilde{\partial}_{a}+2(
G^{a}-\frac{1}{4}F^{a}) \dot{\tilde{\partial}}_{a}\\
&=(g_{b}^{a}\circ h\circ \pi)U^{b}\tilde{\partial}_{a}-2U^b\dot{\tilde{\partial}}_b(
G^{a}-\frac{1}{4}F^{a})\dot{\tilde{\partial}}_{a}+2(
G^{a}-\frac{1}{4}F^{a}) \dot{\tilde{\partial}}_{a}\\
&\ \ \ -\left( g_{c}^{a}\circ h\circ \pi \right) U^{c}\tilde{\partial}_{a}+2(
G^{a}-\frac{1}{4}F^{a}) \dot{\tilde{\partial}}_{a}\\
&=2\Big(2(G^{a}-\frac{1}{4}F^{a})-U^b\dot{\tilde{\partial}}_b(
G^{a}-\frac{1}{4}F^{a})\Big)\dot{\tilde{\partial}}_{a},
\end{align*}
completing the proof.
\end{proof}
\begin{theorem}\label{7}
 If $S$ is the canonical $\left( \rho ,\eta \right) $-spray
associated to the mechanical $\left( \rho ,\eta \right) $-system $\left( \left(
E,\pi ,M\right) ,F_{e},\left( \rho ,\eta \right) \Gamma \right) $ and
locally invertible vector bundles morphism $\left( g,h\right) $, then
\begin{equation*}
\begin{array}{cl}
2\left( G^{a}-\frac{1}{4}F^{a}\right) & \!\!\!\!=\left( \rho ,\eta \right)
\Gamma _{c}^{a}(g_{f}^{c}\circ h\circ \pi )U^{f} \\
& \ +\frac{1}{2}(g_{e}^{d}\circ h\circ \pi )U^{e}(L_{dc}^{b}\circ h\circ \pi
)(\tilde{g}_{b}^{a}\circ h\circ \pi )(g_{f}^{c}\circ h\circ \pi )U^{f} \\
& \ -\frac{1}{2}(\rho _{c}^{j}\circ h\circ \pi )\partial_j
(g_{e}^{b}\circ h\circ \pi )U^{e}(\tilde{g}_{b}^{a}\circ
h\circ \pi )(g_{f}^{c}\circ h\circ \pi )U^{f} \\
& \ +\frac{1}{2}(g_{e}^{b}\circ h\circ \pi )U^{e}(\rho _{b}^{i}\circ h\circ
\pi )\partial_i(\tilde{g}_{c}^{a}\circ h\circ \pi )
(g_{f}^{c}\circ h\circ \pi )U^{f}.%
\end{array}%
\end{equation*}
\end{theorem}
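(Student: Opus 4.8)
The plan is to derive the stated formula as an algebraic consequence of two facts already at hand: the defining relation for the functions $G^a$ of the canonical $(\rho,\eta)$-semispray (from the Example above) and the spray criterion (\ref{IM}) of the preceding Lemma. Write $\Phi^a:=G^a-\tfrac14F^a$, and denote by $A_c$, $B_c$, $C_c$ the three summands of that defining relation carrying, respectively, the structure functions $L_{dc}^{f}$, the first occurrence of $\rho$, and the second occurrence of $\rho$, so that the relation reads
\begin{equation*}
(\rho,\eta)\Gamma_c^a=(\tilde{g}_c^b\circ h\circ\pi)\dot{\partial}_b\Phi^a-A_c+B_c-C_c.
\end{equation*}
The first step is to solve this for the vertical-derivative term, i.e.\ to rewrite it as
\begin{equation*}
(\tilde{g}_c^b\circ h\circ\pi)\dot{\partial}_b\Phi^a=(\rho,\eta)\Gamma_c^a+A_c-B_c+C_c.
\end{equation*}

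Next I would multiply both sides by $(g_f^c\circ h\circ\pi)U^f$ and sum over $c$. On the left-hand side the local invertibility relation (\ref{eq39}), used in the form $(\tilde{g}_c^b\circ h\circ\pi)(g_f^c\circ h\circ\pi)=\delta_f^b$, collapses the coefficient to $\delta_f^bU^f=U^b$, leaving exactly $U^b\dot{\partial}_b\Phi^a$. Since $S$ is assumed to be a $(\rho,\eta)$-spray, the criterion (\ref{IM}) applies and replaces this by $2\Phi^a=2(G^a-\tfrac14F^a)$, which is precisely the left-hand side of the asserted identity. One minor point to record here is that on functions the section $\dot{\tilde{\partial}}_b$ acts through its anchor $\Gamma(\tilde{\rho},Id_E)(\dot{\tilde{\partial}}_b)=\dot{\partial}_b=\partial/\partial y^b$, so the vertical derivative appearing in (\ref{IM}) agrees with the $\dot{\partial}_b\Phi^a$ produced above.

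It then remains to expand the right-hand side: multiplying $(\rho,\eta)\Gamma_c^a$, $A_c$, $-B_c$ and $C_c$ by $(g_f^c\circ h\circ\pi)U^f$ reproduces, term by term and with the correct signs, the four summands displayed in the statement. I do not expect any conceptual obstacle; the only care needed is index bookkeeping---renaming the internal contraction index inside each of $A_c$, $B_c$, $C_c$ so that it does not clash with the new summation index $f$, and tracking the sign reversal of the $A$, $B$, $C$ terms caused by the rearrangement in the first step. Collecting the two sides yields the claimed expression for $2(G^a-\tfrac14F^a)$.
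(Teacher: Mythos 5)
Your proposal is correct and coincides with the derivation the paper intends (the paper states this theorem without an explicit proof, immediately after the relevant Lemma): one solves the Example's defining relation for $(\tilde{g}_c^b\circ h\circ\pi)\dot{\partial}_b(G^a-\tfrac14F^a)$, contracts with $(g_f^c\circ h\circ\pi)U^f$ using (\ref{eq39}), and replaces $U^b\dot{\partial}_b(G^a-\tfrac14F^a)$ by $2(G^a-\tfrac14F^a)$ via the spray criterion (\ref{IM}). Your sign tracking and the remark that only the structure-function term needs its dummy index renamed (from $f$ to $b$) are both accurate, so there is no gap.
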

The spray in Theorem (\ref{7}) will be called the canonical $\left( \rho
,\eta \right) $-spray associated to the mechanical $\left( \rho ,\eta \right) $%
-system $\left( \left( E,\pi ,M\right) ,F_{e},v\Gamma \right) $ and from
locally invertible vector bundles morphism $(g,h)$.
\begin{remark}
If $\left( \rho ,\eta \right) =\left( Id_{TM},Id_{M}\right) $
and $\left( g,h\right) =\left( Id_{TM},Id_{M}\right) ,$ then we get the
canonical spray associated to the connection $\Gamma $ which is similar with the
classical canonical spray associated to the connection $\Gamma$.
\end{remark}
\begin{theorem}
All of the $\left( g,h\right) $-lifts solutions of the system of equations
\begin{align*}
& \frac{dy^{a}}{dt}+\left( \rho ,\eta \right) \Gamma _{c}^{a}(g_{f}^{c}\circ
h\circ \pi )U^{f} \\
& \displaystyle+\frac{1}{2}(g_{e}^{d}\circ h\circ \pi )U^{e}(L_{dc}^{b}\circ
h\circ \pi )(\tilde{g}_{b}^{a}\circ h\circ \pi )(g_{f}^{c}\circ h\circ \pi
)U^{f} \\
& -\frac{1}{2}(\rho _{c}^{j}\circ h\circ \pi )\partial_j(g_{e}^{b}\circ
h\circ \pi )U^{e}(\tilde{g}_{b}^{a}\circ h\circ \pi
)(g_{f}^{c}\circ h\circ \pi )U^{f} \\
& +\frac{1}{2}(g_{e}^{b}\circ h\circ \pi )U^{e}(\rho _{b}^{i}\circ h\circ
\pi )\partial_i(\tilde{g}_{c}^{a}\circ h\circ \pi )
(g_{f}^{c}\circ h\circ \pi )U^{f}=0,
\end{align*}%
are the integral curves of the canonical $\left( \rho ,\eta \right) $-spray
associated to the mechanical $\left( \rho ,\eta \right) $-system $\left( \left(
E,\pi ,M\right) ,F_{e},\left( \rho ,\eta \right) \Gamma \right) $ and
locally invertible vector bundles morphism $\left( g,h\right) .$
\end{theorem}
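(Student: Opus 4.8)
The plan is to deduce this system directly from Theorem \ref{God1} and the explicit formula of Theorem \ref{7}, using that the canonical $(\rho,\eta)$-spray is nothing but the canonical $(\rho,\eta)$-semispray constrained by the vanishing of its derivation. First I would recall from Theorem \ref{God1} that the $(g,h)$-lifts which are integral curves of the canonical $(\rho,\eta)$-semispray are exactly those whose component functions $y^a(t)$ solve
\[
\frac{dy^a(t)}{dt}+2G^a\circ u(c,\dot{c})(\eta\circ h\circ c(t))=\tfrac{1}{2}F^a\circ u(c,\dot{c})(\eta\circ h\circ c(t)),
\]
which, since $2\cdot\tfrac{1}{4}=\tfrac{1}{2}$, I would immediately rewrite in the equivalent compact form
\[
\frac{dy^a(t)}{dt}+2\Big(G^a-\tfrac{1}{4}F^a\Big)\circ u(c,\dot{c})(\eta\circ h\circ c(t))=0.
\]
Because the canonical spray is a particular canonical semispray, the same characterization of its integral curves applies verbatim.

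The second step is to substitute the value of $2(G^a-\tfrac14 F^a)$ supplied by Theorem \ref{7}, which expresses this quantity explicitly through $(\rho,\eta)\Gamma_c^a$, the structure functions $L_{dc}^b$, the anchor components $\rho_b^i$, and the components $g_b^a,\tilde{g}_b^a$ of the locally invertible morphism $(g,h)$. Inserting this expression into the compact equation above and evaluating along the canonical section $u(c,\dot{c})\circ(\eta\circ h\circ c)$ is the heart of the computation. By the construction of the Liouville section together with Remark \ref{GodGod}, every function $U^{\bullet}$ takes the value $y^{\bullet}(t)$ at $\dot{c}(t)$, every factor $(\,\cdot\,\circ h\circ\pi)$ becomes $(\,\cdot\,\circ h\circ c)$, and the partial-derivative factors $\partial_j$, $\partial_i$ are read off at the corresponding base point. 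With these replacements the four summands of Theorem \ref{7} reproduce, term by term, the four lines of the stated system, and the integral-curve equation becomes exactly the displayed differential system.

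Since the proof amounts to inserting one established identity into another, there is no genuine analytic difficulty; the only point demanding care is to confirm that the signs and the placement of the composition symbols in Theorem \ref{7} agree, after evaluation along the curve, with those coming from Theorem \ref{God1}, so that no spurious factor of $2$ or $\tfrac{1}{2}$ is left over. I would verify this summand by summand, which is routine.
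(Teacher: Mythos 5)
Your proposal is correct and coincides with the paper's own (implicit) argument: the paper states this theorem without a separate proof precisely because it is the immediate combination of Theorem \ref{God1} (integral curves of the canonical semispray solve $\frac{dy^a}{dt}+2(G^a-\frac{1}{4}F^a)\circ u(c,\dot{c})\circ(\eta\circ h\circ c)=0$, a characterization inherited by the spray as a special semispray) with the explicit expression for $2(G^a-\frac{1}{4}F^a)$ in Theorem \ref{7}. Your substitution, including the evaluation of the $U^{\bullet}$ and $(\,\cdot\,\circ h\circ\pi)$ factors along the lift, is exactly what is required, and no sign or factor issues arise.
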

\begin{definition}
If $c:I\rightarrow M$ is a differentiable curve such that its $\left( g,h\right) $-lift
$\dot{c}$
is integral curve for the $\left( \rho ,\eta \right) $-spray $S$ of the mechanical $\left( \rho ,\eta \right) $-system $%
\left( \left( E,\pi ,M\right) ,F_{e},\left( \rho ,\eta \right)
\Gamma \right) $, then the curve $c$ will be called the geodesic
for the mechanical $\left( \rho ,\eta \right) $-system
$\left( \left( E,\pi ,M\right) ,F_{e},\left( \rho ,\eta \right)
\Gamma \right) $ with respect to $S$.
\end{definition}
Using (\ref{God}), Theorem \ref{God1} and the above definition we deduce that the curve $c:I\rightarrow M$ is geodesic, if for $i\in\{1, \cdots, m\}$, $a, b\in\{1, \cdots, r\}$,
\begin{equation}
\left\{
\begin{array}{cc}
&\hspace{-5mm}\rho^i_b(\eta\circ h\circ c(t))g^b_a(h\circ c(t))y^a(t)=\frac{d(\eta\circ h\circ c)^i(t)}{dt},\\\\
&\hspace{-2.5cm}\frac{dy^a}{dt}+2(G^a-\frac{1}{4}F^a)(\dot{c}(t))=0.
\end{array}
\right.
\end{equation}
%*****************************************************************************
\section{Projectively related and Weyl type theorems}
%*****************************************************************************
Let $\left( \rho ,\eta \right) \nabla $ be the Berwald covariant
$\left( \rho ,\eta \right) $-derivative. Then using (\ref{good}) for
any
$X=X^{a}\tilde{\delta}_{a}+\dot{\tilde{X}}^{a}\dot{\tilde{\partial}}_{a}\in
\Gamma (\!\left( \rho ,\eta \right) TE,\!\left( \rho ,\eta \right)
\tau _{E},\!E)$ and $T\in \mathcal{T}_{qs}^{pr}\!(\!\left( \rho
,\eta \right) TE,\!\left( \rho ,\eta \right) \tau _{E},\!E)$ we have
\begin{equation*}
\begin{array}{l}
\left( \rho ,\eta \right) \nabla _{X}\left(
T_{b_{1}\cdots b_{q}e_{1}\cdots e_{s}}^{a_{1}\cdots a_{p}d_{1}\cdots d_{r}}\tilde{\delta}%
_{a_{1}}\otimes \cdots \otimes \tilde{\delta}_{a_{p}}\otimes d\tilde{x}%
^{b_{1}}\otimes \cdots \otimes \right. \vspace*{1mm} \\
\hspace*{9mm}\left. \otimes d\tilde{x}^{b_{q}}\otimes \dot{\tilde{\partial}}%
_{d_{1}}\otimes\cdots \otimes \dot{\tilde{\partial}}_{d_{r}}\otimes
\delta
\tilde{y}^{e_{1}}\otimes\cdots \otimes \delta \tilde{y}^{e_{s}}\right) =%
\vspace*{1mm} \\
\hspace*{9mm}=X^{c}T_{b_{1}\cdots b_{q}e_{1}\cdots e_{s}\mid
c}^{a_{1}\cdots a_{p}d_{1}\cdots d_{r}}\tilde{\delta}_{a_{1}}\otimes
\cdots \otimes
\tilde{\delta}_{a_{p}}\otimes d\tilde{x}^{b_{1}}\otimes\cdots \otimes d\tilde{x}%
^{b_{q}}\otimes \dot{\tilde{\partial}}_{d_{1}}\otimes \cdots \otimes \vspace*{1mm%
} \\
\hspace*{9mm}\otimes \dot{\tilde{\partial}}_{d_{r}}\otimes \delta \tilde{y}%
^{e_{1}}\otimes\cdots \otimes \delta \tilde{y}^{e_{s}}+\dot{\tilde{X}}%
^{c}T_{b_{1}\cdots b_{q}e_{1}\cdots e_{s}}^{a_{1}\cdots a_{p}d_{1}\cdots d_{r}}\mid _{c}%
\tilde{\delta}_{a_{1}}\otimes \cdots \otimes \vspace*{1mm} \\
\hspace*{9mm}\otimes \tilde{\delta}_{a_{p}}\otimes
d\tilde{x}^{b_{1}}\otimes\cdots \otimes d\tilde{x}^{b_{q}}\otimes
\dot{\tilde{\partial}}_{d_{1}}\otimes
\cdots \otimes \dot{\tilde{\partial}}_{d_{r}}\otimes \delta \tilde{y}%
^{e_{1}}\otimes\cdots \otimes \delta \tilde{y}^{e_{s}},%
\end{array}%
\end{equation*}%
where
\begin{equation*}
\begin{array}{l}
T_{b_{1}\cdots b_{q}e_{1}\cdots e_{s}\mid c}^{a_{1}\cdots a_{p}d_{1}\cdots d_{r}}=\vspace*{%
2mm}\Gamma \left( \tilde{\rho},Id_{E}\right) \left( \tilde{\delta}%
_{c}\right) T_{b_{1}\cdots b_{q}e_{1}\cdots e_{s}}^{a_{1}\cdots a_{p}d_{1}\cdots d_{r}} \\
\hspace*{8mm}+\dot{\partial}_{a}\left( \rho ,\eta \right) \Gamma
_{c}^{a_{1}}T_{b_{1}\cdots b_{q}e_{1}\cdots e_{s}}^{aa_{2}\cdots a_{p}d_{1}\cdots d_{r}}+\cdots +%
\vspace*{2mm}\dot{\partial}_{a}\left( \rho ,\eta \right) \Gamma
_{c}^{a_{p}}T_{b_{1}\cdots b_{q}e_{1}\cdots e_{s}}^{a_{1}\cdots
a_{p-1}ad_{1}\cdots d_{r}}
\\
\hspace*{8mm}-\dot{\partial}_{b_{1}}\left( \rho ,\eta \right) \Gamma
_{c}^{b}T_{bb_{2}\cdots b_{q}e_{1}\cdots e_{s}}^{a_{1}\cdots a_{p}d_{1}\cdots d_{r}}-\cdots -%
\vspace*{2mm}\dot{\partial}_{b_{q}}\left( \rho ,\eta \right) \Gamma
_{c}^{b}T_{b_{1}\cdots b_{q-1}be_{1}\cdots e_{s}}^{a_{1}\cdots a_{p}d_{1}\cdots d_{r}} \\
\hspace*{8mm}+\dot{\partial}_{d}\left( \rho ,\eta \right) \Gamma
_{c}^{d_{1}}T_{b_{1}\cdots b_{q}e_{1}\cdots e_{s}}^{a_{1}\cdots a_{p}dd_{2}\cdots d_{r}}+\cdots +%
\vspace*{2mm}\dot{\partial}_{d}\left( \rho ,\eta \right) \Gamma
_{c}^{d_{r}}\left( \rho ,\eta \right) T_{b_{1}\cdots
b_{q}e_{1}\cdots e_{s}}^{a_{1}\cdots a_{p}d_{1}\cdots d_{r-1}d}
\\
\hspace*{8mm}-\dot{\partial}_{e_{1}}\left( \rho ,\eta \right) \Gamma
_{c}^{e}T_{b_{1}\cdots b_{q}ee_{2}\cdots e_{s}}^{a_{1}\cdots a_{p}d_{1}\cdots d_{r}}-%
\vspace*{2mm}\cdots -\dot{\partial}_{e_{s}}\left( \rho ,\eta \right)
\Gamma _{c}^{e}T_{b_{1}\cdots b_{q}e_{1}\cdots
e_{s-1}e}^{a_{1}\cdots a_{p}d_{1}\cdots d_{r}},
\end{array}
\end{equation*}%
and
\begin{equation*}
\begin{array}{l}
T_{b_{1}\cdots b_{q}e_{1}\cdots e_{s}}^{a_{1}\cdots a_{p}d_{1}\cdots d_{r}}\mid _{c}=%
\vspace*{2mm}\Gamma \left( \tilde{\rho},Id_{E}\right) \left( \dot{\tilde{%
\partial}}_{c}\right)
T_{b_{1}\cdots b_{q}e_{1}\cdots e_{s}}^{a_{1}\cdots a_{p}d_{1}\cdots
d_{r}}.
\end{array}%
\end{equation*}
Now we define the $v$-covariant $\left( \rho ,\eta
\right)$-derivative $\left( \rho ,\eta \right) \nabla _{X}^{v}$ as
\begin{equation*}
\left( \rho ,\eta \right) \nabla _{X}^{v}f:=\Gamma \left( \tilde{\rho}%
,Id_{E}\right)
(\mathcal{V}X)f=\tilde{X}^{a}\dot{{\partial}}_{a}\left( f\right),
\end{equation*}
and
\begin{equation*}
\begin{array}{l}
\left( \rho ,\eta \right) \nabla _{X}^{v}\left(
T_{b_{1}\cdots b_{q}e_{1}\cdots e_{s}}^{a_{1}\cdots a_{p}d_{1}\cdots d_{r}}\tilde{\delta}%
_{a_{1}}\otimes \cdots \otimes \tilde{\delta}_{a_{p}}\otimes d\tilde{x}%
^{b_{1}}\otimes \cdots \otimes \right. \vspace*{1mm} \\
\hspace*{9mm}\left. \otimes d\tilde{x}^{b_{q}}\otimes \dot{\tilde{\partial}}%
_{d_{1}}\otimes \cdots \otimes \dot{\tilde{\partial}}_{d_{r}}\otimes
\delta
\tilde{y}^{e_{1}}\otimes \cdots \otimes \delta \tilde{y}^{e_{s}}\right) = \\
\hspace*{9mm}\dot{\tilde{X}}%
^{c}T_{b_{1}\cdots b_{q}e_{1}\cdots e_{s}}^{a_{1}\cdots a_{p}d_{1}\cdots d_{r}}\mid _{c}%
\tilde{\delta}_{a_{1}}\otimes \cdots \otimes \vspace*{1mm}\tilde{\delta}%
_{a_{p}}\otimes d\tilde{x}^{b_{1}}\otimes \cdots \otimes d\tilde{x}^{b_{q}} \\
\hspace*{9mm}\otimes \dot{\tilde{\partial}}_{d_{1}}\otimes \cdots \otimes \dot{%
\tilde{\partial}}_{d_{r}}\otimes \delta \tilde{y}^{e_{1}}\otimes
\cdots \otimes
\delta \tilde{y}^{e_{s}},%
\end{array}%
\end{equation*}
where $f\in \mathcal{F}(E)$.
\begin{definition}
Hessian of a smooth function $f$ on $E$ is a $\left( _{0,2}^{0,
0}\right) $-tensor on $((\rho ,\eta )TE,(\rho ,\eta )\tau _{E},E)$
defined by
\begin{align*}
\text{Hess}(f)&=( \dot{\partial}_{a}\dot{\partial}%
_{b}f) \delta \tilde{y}^{a}\otimes \delta \tilde{y}^{b}.
\end{align*}
\end{definition}

\begin{definition}
Function $f\in \mathcal{F}(E)$ is called homogeneous of degree
one or 1-homogeneous, if $(\rho ,\eta )\nabla _{U}^{v}f=f$.
\end{definition}
Since $\mathcal{V}U=\mathbb{C}$, then $(\rho ,\eta )\nabla _{U}^{v}f=%
\vspace*{2mm}\Gamma \left( \tilde{\rho},Id_{E}\right) (\mathbb{C})f=U^{a}%
\dot{\partial}_{a}f $. Thus the 1-homogeneity of $F$ is equivalent
to the condition
\begin{equation}\label{esi}
U^{a}\dot{\partial}_{a}f=f.
\end{equation}
%-----------------------------------------------------------------------------------------------------------------------------------------------
%-----------------------------------------------------------------------------------------------------------------------------------------------
\begin{lemma}
Let $f\in \mathcal{F}(E)$ be 1-homogeneous. Then the following is
hold.
\begin{equation*}
(\rho, \eta)\nabla _{U}^{v}(\text{Hess}f)=-\text{Hess}f.
\end{equation*}
\end{lemma}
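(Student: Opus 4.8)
The plan is to first reduce the $v$-covariant derivative to a bare vertical partial derivative, and then to recognize the desired identity as the second-order Euler relation for a $1$-homogeneous function. First I would read off the definition of $(\rho ,\eta )\nabla^{v}$ given just above: for a tensor its only surviving term is $\dot{\tilde{X}}^{c}\,T^{\cdots}_{\cdots}\mid_{c}$, where $T^{\cdots}_{\cdots}\mid_{c}=\Gamma(\tilde{\rho},Id_{E})(\dot{\tilde{\partial}}_{c})T^{\cdots}_{\cdots}=\dot{\partial}_{c}T^{\cdots}_{\cdots}$. (This is the Berwald vertical derivative: the vertical coefficients $V^{a}_{bc}=\tilde{V}^{a}_{bc}=0$, so no connection terms appear and only $\dot{\partial}_{c}$ acting on the scalar components remains.) Since $\mathcal{V}U=\mathbb{C}=U^{a}\dot{\tilde{\partial}}_{a}$, the vertical adapted component of $U$ is $\dot{\tilde{U}}^{c}=U^{c}$, so $(\rho ,\eta )\nabla^{v}_{U}T$ is the tensor whose components are $U^{c}\dot{\partial}_{c}$ applied to the components of $T$.

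Applying this to $\text{Hess}(f)=(\dot{\partial}_{a}\dot{\partial}_{b}f)\,\delta\tilde{y}^{a}\otimes\delta\tilde{y}^{b}$, whose components are $T_{ab}=\dot{\partial}_{a}\dot{\partial}_{b}f$, reduces the claim to the scalar identity
\[
U^{c}\dot{\partial}_{c}\dot{\partial}_{a}\dot{\partial}_{b}f=-\dot{\partial}_{a}\dot{\partial}_{b}f .
\]
I would obtain this by differentiating the $1$-homogeneity condition \eqref{esi}, namely $U^{c}\dot{\partial}_{c}f=f$, twice in the vertical directions. Differentiating once with $\dot{\partial}_{a}$ and using $\dot{\partial}_{a}U^{c}=\delta^{c}_{a}$ from Remark \ref{GodGod} gives $\dot{\partial}_{a}f+U^{c}\dot{\partial}_{c}\dot{\partial}_{a}f=\dot{\partial}_{a}f$, hence $U^{c}\dot{\partial}_{c}\dot{\partial}_{a}f=0$; differentiating this again with $\dot{\partial}_{b}$ and again using $\dot{\partial}_{b}U^{c}=\delta^{c}_{b}$ yields $\dot{\partial}_{a}\dot{\partial}_{b}f+U^{c}\dot{\partial}_{c}\dot{\partial}_{a}\dot{\partial}_{b}f=0$, which is exactly the required relation. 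Substituting back, $(\rho ,\eta )\nabla^{v}_{U}(\text{Hess}f)=(U^{c}\dot{\partial}_{c}\dot{\partial}_{a}\dot{\partial}_{b}f)\,\delta\tilde{y}^{a}\otimes\delta\tilde{y}^{b}=-(\dot{\partial}_{a}\dot{\partial}_{b}f)\,\delta\tilde{y}^{a}\otimes\delta\tilde{y}^{b}=-\text{Hess}f$.

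The computation is essentially routine once the two reductions are in place, so there is no serious analytic obstacle; the only point that needs care is the first reduction. One must check that $(\rho ,\eta )\nabla^{v}_{U}$ genuinely sees only the vertical partial derivative of the components, i.e.\ that the cotensor slots $\delta\tilde{y}^{a}$ contribute nothing. This is guaranteed precisely because the derivative is built from the Berwald connection (so the vertical coefficients vanish) and because $\text{Hess}(f)$ carries only vertical cotensor slots; had horizontal indices been present, the horizontal Berwald coefficients $\dot{\partial}_{a}(\rho ,\eta )\Gamma^{\cdots}_{\cdots}$ would have entered and the clean Euler argument would need extra justification. Confirming that these terms are absent for the purely vertical Hessian is the crux of the argument.
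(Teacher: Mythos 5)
Your proposal is correct and follows essentially the same route as the paper's own proof: reduce $(\rho,\eta)\nabla^{v}_{U}$ on the Hessian to $U^{c}\dot{\partial}_{c}$ acting on the components $\dot{\partial}_{a}\dot{\partial}_{b}f$, then deduce $U^{c}\dot{\partial}_{c}\dot{\partial}_{a}\dot{\partial}_{b}f=-\dot{\partial}_{a}\dot{\partial}_{b}f$ from the $1$-homogeneity condition \eqref{esi} and $\dot{\partial}_{b}U^{a}=\delta^{a}_{b}$. Your explicit double differentiation of the Euler relation merely spells out the step the paper leaves implicit, so the two arguments coincide.
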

\begin{proof}
Using the definition of the Hessian, leads to
\[
(\rho, \eta)\nabla
_{U}^{v}(\text{Hess}f)=U^c(\dot{\partial}_a\dot{\partial}_bf)\mid_c\delta\tilde{y}^{a}\otimes
\delta
\tilde{y}^{b}=U^c(\dot{\partial}_c\dot{\partial}_a\dot{\partial}_bf)\delta\tilde{y}^{a}\otimes
\delta \tilde{y}^{b}.
\]
Since $f$ is 1-homogeneous and $\dot{\partial}_b(U^a)=\delta_b^a$,
then using (\ref{esi}) get
$U^c\dot{\partial}_c\dot{\partial}_a\dot{\partial}_bf=-\dot{\partial}_a\dot{\partial}_bf$.
Using preceding result, the proof is concluded.
\end{proof}
\begin{definition}
A horizontal projector $\mathcal{H}$ is said homogeneous, if $(\rho, \eta)\nabla _{{%
\mathcal{H}}X}U=0$, where $(\rho, \eta)\nabla $ denotes the Berwald derivative and $X$
is arbitrary in\\ $\Gamma ((\rho ,\eta )TE,(\rho ,\eta )\tau _{E},E)$.
\end{definition}

\begin{lemma}\label{Lemma7.5}
A horizontal projector $\mathcal{H}$ is homogeneous if and only if
\begin{equation}
U^c{{\dot{{\partial}}}_c}((\rho,\eta)\Gamma^b_a)=(\rho,\eta)\Gamma^b_a.
\end{equation}
\end{lemma}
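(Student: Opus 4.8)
The plan is to evaluate $(\rho ,\eta )\nabla _{\mathcal{H}X}U$ explicitly in the adapted $(\rho ,\eta )$-base and then to read off exactly when it vanishes identically in $X$. First I would write an arbitrary section as $X=X^{a}\tilde{\delta}_{a}+\dot{\tilde{X}}^{a}\dot{\tilde{\partial}}_{a}$, so that $\mathcal{H}X=X^{a}\tilde{\delta}_{a}$ has no vertical component, and recall from the construction of the canonical section that $U=U^{a}\tilde{\delta}_{a}+U^{a}\dot{\tilde{\partial}}_{a}$. Applying the Berwald covariant derivative (the instance of (\ref{good}) for $(\rho ,\eta )\nabla $), every term coming from the directions $\dot{\tilde{\partial}}_{c}$ is killed by the vanishing vertical part of $\mathcal{H}X$; hence only the horizontal covariant derivatives ${(\cdot)}_{|c}$ of the components of $U$ contribute.

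Next I would compute those horizontal derivatives, using that the Berwald linear $(\rho ,\eta )$-connection is normal with components $(\rho ,\eta )H_{bc}^{a}=(\rho ,\eta )\tilde{H}_{bc}^{a}=\dot{\partial}_{b}((\rho ,\eta )\Gamma _{c}^{a})$. Since $H=\tilde{H}$, both the $\tilde{\delta}_{a}$- and the $\dot{\tilde{\partial}}_{a}$-components of $U$ produce the same coefficient
\begin{equation*}
U_{\ |c}^{a}=\Gamma (\tilde{\rho},Id_{E})(\tilde{\delta}_{c})U^{a}+(\rho ,\eta )H_{ec}^{a}U^{e}.
\end{equation*}
The crucial evaluation is $\Gamma (\tilde{\rho},Id_{E})(\tilde{\delta}_{c})U^{a}=-(\rho ,\eta )\Gamma _{c}^{a}$, which I obtain by inserting $\Gamma (\tilde{\rho},Id_{E})(\tilde{\delta}_{c})=(\rho _{c}^{i}\circ h\circ \pi )\partial _{i}-(\rho ,\eta )\Gamma _{c}^{b}\dot{\partial}_{b}$ and invoking Remark \ref{GodGod} ($\partial _{i}U^{a}=0$ and $\dot{\partial}_{b}U^{a}=\delta _{b}^{a}$). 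Consequently $U_{\ |c}^{a}=U^{e}\dot{\partial}_{e}((\rho ,\eta )\Gamma _{c}^{a})-(\rho ,\eta )\Gamma _{c}^{a}$.

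Collecting the two components then gives
\begin{equation*}
(\rho ,\eta )\nabla _{\mathcal{H}X}U=X^{c}\big(U^{e}\dot{\partial}_{e}((\rho ,\eta )\Gamma _{c}^{a})-(\rho ,\eta )\Gamma _{c}^{a}\big)\big(\tilde{\delta}_{a}+\dot{\tilde{\partial}}_{a}\big).
\end{equation*}
Since $\{\tilde{\delta}_{a},\dot{\tilde{\partial}}_{a}\}$ is a base and $X^{c}$ is arbitrary, this expression vanishes for every $X$ if and only if $U^{e}\dot{\partial}_{e}((\rho ,\eta )\Gamma _{c}^{a})=(\rho ,\eta )\Gamma _{c}^{a}$ for all $a,c$, which after relabeling indices is precisely the asserted identity $U^{c}\dot{\partial}_{c}((\rho ,\eta )\Gamma _{a}^{b})=(\rho ,\eta )\Gamma _{a}^{b}$.

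The only delicate point—bookkeeping rather than a genuine obstacle—is the evaluation of $\Gamma (\tilde{\rho},Id_{E})(\tilde{\delta}_{c})U^{a}$: one must apply Remark \ref{GodGod} so that the $\partial _{i}$-term drops out and the $\dot{\partial}_{b}$-term collapses to $-(\rho ,\eta )\Gamma _{c}^{a}$, since it is exactly this term that pairs against the Berwald coefficient $(\rho ,\eta )H_{ec}^{a}U^{e}$ to yield the homogeneity condition. It is also worth recording that the horizontal and vertical components of $U$ need not be analysed separately, precisely because the Berwald connection is normal, so $H=\tilde{H}$ and the two components contribute identically.
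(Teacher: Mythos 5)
Your proof is correct and follows essentially the same route as the paper: both expand $(\rho,\eta)\nabla_{\mathcal{H}X}U$ via the Berwald components $H^a_{bc}=\tilde{H}^a_{bc}=\dot{\partial}_b((\rho,\eta)\Gamma^a_c)$, evaluate $\Gamma(\tilde{\rho},Id_E)(\tilde{\delta}_c)U^a=-(\rho,\eta)\Gamma^a_c$ using Remark \ref{GodGod}, and read off the homogeneity condition. The only cosmetic difference is that the paper plugs in the basis sections $X=\tilde{\delta}_a$ directly while you keep $X$ arbitrary and factor out $X^c$, which makes the ``if and only if'' slightly more explicit but is the same argument.
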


\begin{proof}
Since  $\mathcal{H}$ is homogeneous, then  $\nabla_{{\mathcal{H}}X}U=0$. Setting $X=\dot{\tilde{\delta}}_a$ and the locally expression of $U$ in this equation yields
\[
(\rho^i_a\circ h\circ\pi)\partial_iU^b-(\rho, \eta)\Gamma^c_a\dot{\partial}_cU^b+U^c\dot{\partial}_c((\rho, \eta)\Gamma^b_a)=0.
\]
Since $\partial_iU^b=0$ and $\dot{\partial}_cU^b=\delta^b_c$, then the above equation reduces to
\begin{equation}\label{esi1}
-(\rho, \eta)\Gamma^b_a+U^c\dot{\partial}_c((\rho, \eta)\Gamma^b_a)=0,
\end{equation}
which completes the proof.
\end{proof}
\begin{proposition}\label{Prop7.6}
Let $\overset{\circ}{\mathbb{P}}$ be the mixed curvature of the
Berwald derivative. If the horizontal projector $\mathcal{H}$ is
homogeneous, then the mixed curvature of the Berwald derivative induced by $%
\mathcal{H}$ satisfies
\begin{align*}
\overset{\circ}{\mathbb{P}}(X,Y)U=\overset{\circ}{\mathbb{P}}(U,X)Y=0, \ \
\forall X,Y\in\Gamma((\rho,\eta)TE,(\rho,\eta)\tau _{E},E),
\end{align*}
\end{proposition}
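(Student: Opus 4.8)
The plan is to reduce everything to the local components of the Berwald mixed curvature and then exploit the homogeneity of $\mathcal{H}$ through Lemma \ref{Lemma7.5}. First I would record the components of $\overset{\circ}{\mathbb{P}}$. Since the Berwald linear $(\rho,\eta)$-connection has $(\rho,\eta)V^a_{bc}=(\rho,\eta)\tilde{V}^a_{bc}=0$ and $(\rho,\eta)H^a_{bc}=(\rho,\eta)\tilde{H}^a_{bc}=\dot{\partial}_b((\rho,\eta)\Gamma^a_c)$, formulas (\ref{reclaim2}) collapse: every term carrying a factor of $V$ or $\tilde{V}$ vanishes, leaving
\[
\overset{\circ}{\mathbb{P}}{}^a_{~b~cd}=\overset{\circ}{\mathbb{\tilde{P}}}{}^a_{~b~cd}=\Gamma(\tilde{\rho},Id_E)(\dot{\tilde{\partial}}_d)\big((\rho,\eta)H^a_{bc}\big)=\dot{\partial}_d\dot{\partial}_b\big((\rho,\eta)\Gamma^a_c\big).
\]

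Next I would extract the key homogeneity identity from Lemma \ref{Lemma7.5}. Homogeneity of $\mathcal{H}$ is equivalent to $U^c\dot{\partial}_c\big((\rho,\eta)\Gamma^b_a\big)=(\rho,\eta)\Gamma^b_a$, i.e. the connection coefficients are $1$-homogeneous in the fibre variable, so their first vertical derivatives are $0$-homogeneous. Applying $\dot{\partial}_d$ to this relation and using $\dot{\partial}_cU^b=\delta^b_c$ from Remark \ref{GodGod}, the term $\dot{\partial}_d\big((\rho,\eta)\Gamma^b_a\big)$ cancels on both sides and I obtain
\[
U^m\dot{\partial}_m\dot{\partial}_n\big((\rho,\eta)\Gamma^a_c\big)=0,
\]
valid for every free index $n$. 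In words, contracting $U$ with either of the two (symmetric) vertical-derivative slots of $\overset{\circ}{\mathbb{P}}{}^a_{~b~cd}$ annihilates it.

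Finally I would substitute into the explicit expression of the mixed curvature recorded after its definition, namely $\mathbb{P}(X,Y)Z=\dot{\tilde{X}}^dY^cZ^b\,\mathbb{P}^a_{~b~cd}\tilde{\delta}_a+\dot{\tilde{X}}^dY^c\dot{\tilde{Z}}^b\,\mathbb{\tilde{P}}^a_{~b~cd}\dot{\tilde{\partial}}_a$ (the contracted third-slot index being $b$). The point is that the canonical section has both its horizontal and vertical adapted components equal to $U^a$. For $\overset{\circ}{\mathbb{P}}(X,Y)U$ I set $Z=U$, so the third-slot factors $Z^b$ and $\dot{\tilde{Z}}^b$ both become $U^b$, producing the coefficient $U^b\overset{\circ}{\mathbb{P}}{}^a_{~b~cd}=U^b\dot{\partial}_d\dot{\partial}_b\big((\rho,\eta)\Gamma^a_c\big)=0$ by the identity above. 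For $\overset{\circ}{\mathbb{P}}(U,X)Y$ I set the first slot to $U$, so the factor $\dot{\tilde{X}}^d$ becomes $U^d$, producing the coefficient $U^d\overset{\circ}{\mathbb{P}}{}^a_{~b~cd}=U^d\dot{\partial}_d\dot{\partial}_b\big((\rho,\eta)\Gamma^a_c\big)=0$ again by the same identity. The two cases contract $U$ with the inner, respectively the outer, derivative index, and both are annihilated; the identical computation applies verbatim to the $\overset{\circ}{\mathbb{\tilde{P}}}$-part, so both expressions vanish.

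I expect the only delicate point to be the index bookkeeping: one must track that the homogeneity of $\mathcal{H}$ makes $\dot{\partial}_b((\rho,\eta)\Gamma^a_c)$ homogeneous of degree zero, so that $U$ contracted against either derivative slot of the second vertical derivative vanishes. The symmetry $\dot{\partial}_d\dot{\partial}_b=\dot{\partial}_b\dot{\partial}_d$ is precisely what makes the two assertions—contracting $U$ in the $Z$-slot versus in the $X$-slot—reduce to the single vanishing identity derived above.
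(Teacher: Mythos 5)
Your proof is correct and follows essentially the same route as the paper: specialize the components in (\ref{reclaim2}) to the Berwald connection (so both mixed-curvature components reduce to $\dot{\partial}_d\dot{\partial}_b((\rho,\eta)\Gamma^a_c)$), then differentiate the homogeneity identity of Lemma \ref{Lemma7.5} to get $U^m\dot{\partial}_m\dot{\partial}_n((\rho,\eta)\Gamma^a_c)=0$, which kills both contractions. If anything, you are more explicit than the paper, which states the key identity $U^b\dot{\partial}_d\dot{\partial}_b((\rho,\eta)\Gamma^a_c)=0$ without showing the differentiation step and dismisses the second vanishing with ``similarly,'' whereas you spell out the symmetry of the second vertical derivatives that makes it work.
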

\begin{proof}
Using (\ref{reclaim2}) and the components of the Berwald derivative, yield
\begin{align*}
\overset{\circ}{\mathbb{P}}(X,Y)U&=\dot{\tilde{X}}^{d}Y^{c}U^b\left(
\rho ,\eta
,h\right) \mathbb{P}_{~b~cd}^{a}\tilde{\delta}_{a}+\dot{\tilde{X}}^{d}Y^{c}U^b\left( \rho ,\eta ,h\right) \mathbb{\tilde{P}}_{~b~cd}^{a}\dot{\tilde{%
\partial}}_{a}\\
&=\dot{\tilde{X}}^{d}Y^{c}U^b\left( \rho ,\eta
,h\right)\Gamma(\tilde{\rho}, \text{Id}_E)(\dot{\tilde{\partial}}_d)(\dot{\partial}_b(\rho, \eta)\Gamma^a_c)(\tilde{\delta}_{a}+\dot{\tilde{%
\partial}}_{a})\\
&=\dot{\tilde{X}}^{d}Y^{c}U^b\dot{\partial}_d\dot{\partial}_b((\rho, \eta)\Gamma^a_c)(\tilde{\delta}_{a}+\dot{\tilde{%
\partial}}_{a}).
\end{align*}
But (\ref{esi1}) obtains $U^b\dot{\partial}_d\dot{\partial}_b((\rho, \eta)\Gamma^a_c)=0$. Therefore the above equation gives $\overset{\circ}{\mathbb{P}}(X,Y)U=0$. Similarly we can deduce the second equation.
\end{proof}
\begin{defn}
The modules endomorphism $\mathcal{H}_S$ on $\Gamma \left( \left( \rho ,\eta \right) TE,\left( \rho ,\eta \right) \tau
_{E},E\right)$ defined by
$$\mathcal{H}_S(X):=\frac{1}{2}\Big(X+[\mathcal{J}_{(g,h)}X, S]_{( \rho ,\eta ) TE}-\mathcal{J}_{(g,h)}[X, S]_{( \rho ,\eta ) TE} \Big), $$
is a horizontal projector which is called the horizontal projector associated to the $(\rho,\eta)$-semispray $S$.
\end{defn}
Using the above definition, we get
\begin{align*}
\mathcal{H}_S(\dot{\tilde{\partial}}_{a})=0,\ \ \mathcal{H}_S({\tilde{\partial}}_{a})&={\tilde{\partial}}_{a}+\mathcal{H}^b_a\dot{\tilde{\partial}}_b,
\end{align*}
where
\begin{align}\label{esi2}
\mathcal{H}^b_a:&=\frac{1}{2}[U^c((g^d_c\tilde{g}^b_eL^e_{da})\circ h\circ\pi)-U^c(\rho^i_a\circ h\circ\pi)(\partial_i(g^e_c\circ h\circ\pi))(\tilde{g}^b_e\circ h\circ\pi)\nonumber\\
&\ \ \ -(g^c_d\circ h\circ\pi)U^d(\rho^i_c\circ h\circ\pi)(\partial_i(\tilde{g}^b_a\circ h\circ\pi))\nonumber\\
&\ \ \ -2(\tilde{g}^c_a\circ h\circ\pi)\dot{\partial}_c(G^b-\frac{1}{4}F^b)].
\end{align}
\begin{lemma}\label{Lemma7.7}
If $S$ is a $(\rho, \eta)$-spray, then $\mathcal{H}_S$ is homogenous.
\end{lemma}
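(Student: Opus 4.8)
The plan is to reduce the claim to the algebraic criterion furnished by Lemma~\ref{Lemma7.5}, and then to verify that criterion by exploiting the spray condition~(\ref{IM}). Recall that a horizontal projector $\mathcal{H}$ and its associated $(\rho,\eta)$-connection are related through $\mathcal{H}(\tilde{\partial}_a)=\tilde{\delta}_a=\tilde{\partial}_a-(\rho,\eta)\Gamma^b_a\dot{\tilde{\partial}}_b$. Comparing this with the computation $\mathcal{H}_S(\tilde{\partial}_a)=\tilde{\partial}_a+\mathcal{H}^b_a\dot{\tilde{\partial}}_b$ recorded just before the statement, the connection coefficients induced by $\mathcal{H}_S$ are $(\rho,\eta)\Gamma^b_a=-\mathcal{H}^b_a$. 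Since the homogeneity criterion of Lemma~\ref{Lemma7.5}, namely $U^c\dot{\partial}_c((\rho,\eta)\Gamma^b_a)=(\rho,\eta)\Gamma^b_a$, is linear in the connection coefficients, it suffices to establish
\[
U^c\dot{\partial}_c(\mathcal{H}^b_a)=\mathcal{H}^b_a.
\]

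First I would treat the three terms of $\mathcal{H}^b_a$ in~(\ref{esi2}) that are linear in $U$. Each of these has the shape $U^e\cdot\phi(x)$, where $\phi$ is assembled from factors of the form $g,\tilde{g},L,\rho$ composed with $h\circ\pi$ together with their $x$-derivatives; all such factors are pulled back from the base and hence depend only on the base point, so $\dot{\partial}_c$ annihilates them. Using Remark~\ref{GodGod} (that is, $\dot{\partial}_c U^e=\delta^e_c$ and $\partial_iU^b=0$), one finds $U^c\dot{\partial}_c(U^e\phi)=U^c\delta^e_c\phi=U^e\phi$, so each of these three terms is reproduced unchanged by the Liouville operator $U^c\dot{\partial}_c$.

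The decisive step is the remaining term $-2(\tilde{g}^c_a\circ h\circ\pi)\dot{\partial}_c(G^b-\tfrac14 F^b)$. Writing $H^b:=G^b-\tfrac14 F^b$, the spray hypothesis gives, via~(\ref{IM}), the Euler-type relation $U^e\dot{\partial}_e H^b=2H^b$. Applying $\dot{\partial}_c$ to both sides and again invoking $\dot{\partial}_c U^e=\delta^e_c$ yields $\dot{\partial}_c H^b+U^e\dot{\partial}_e\dot{\partial}_c H^b=2\dot{\partial}_c H^b$, whence
\[
U^e\dot{\partial}_e\dot{\partial}_c H^b=\dot{\partial}_c H^b.
\]
Because the prefactor $\tilde{g}^c_a\circ h\circ\pi$ is once more $y$-independent, this shows the fourth term is also reproduced by $U^c\dot{\partial}_c$. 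Adding the four contributions gives $U^c\dot{\partial}_c(\mathcal{H}^b_a)=\mathcal{H}^b_a$, and Lemma~\ref{Lemma7.5} then delivers the homogeneity of $\mathcal{H}_S$.

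I expect the only genuine subtlety to be the passage from the degree-two homogeneity of $H^b$ encoded in the spray condition to the degree-one homogeneity of its vertical derivative $\dot{\partial}_c H^b$; everything else is the routine observation that the structure functions are pulled back from the base and are therefore killed by vertical derivatives, so that the $U$-linear terms are automatically $1$-homogeneous under $U^c\dot{\partial}_c$.
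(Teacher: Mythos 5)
Your proof is correct and takes essentially the same route as the paper: reduce via Lemma~\ref{Lemma7.5} to showing $U^f\dot{\partial}_f\mathcal{H}^b_a=\mathcal{H}^b_a$, observe that the three $U$-linear terms of (\ref{esi2}) are reproduced because their coefficients are pulled back from the base, and settle the last term using the spray condition (\ref{IM}). Your write-up only makes explicit two points the paper leaves implicit, namely the identification $(\rho,\eta)\Gamma^b_a=-\mathcal{H}^b_a$ behind the appeal to Lemma~\ref{Lemma7.5}, and the derivation of $U^e\dot{\partial}_e\dot{\partial}_c(G^b-\tfrac14 F^b)=\dot{\partial}_c(G^b-\tfrac14 F^b)$ by differentiating the Euler relation.
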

\begin{proof}
Using Lemma \ref{Lemma7.5}, it is sufficient to show that
$U^f\dot{\partial}_f\mathcal{H}^b_a=\mathcal{H}^b_a$. Now 
(\ref{esi2}) infers
\begin{align*}
U^f\dot{\partial}_f\mathcal{H}^b_a:&=\frac{1}{2}[U^f((g^d_f\tilde{g}^b_eL^e_{da})\circ h\circ\pi)-U^f(\rho^i_a\circ h\circ\pi)(\partial_i(g^e_f\circ h\circ\pi))
(\tilde{g}^b_e\circ h\circ\pi)\nonumber\\
&\ \ \ -(g^c_f\circ h\circ\pi)U^f(\rho^i_c\circ
h\circ\pi)(\partial_i(\tilde{g}^b_a\circ
h\circ\pi))\nonumber\\
&\ \ \ -2(\tilde{g}^c_a\circ
h\circ\pi)U^f\dot{\partial}_f\dot{\partial}_c(G^b-\frac{1}{4}F^b)].
\end{align*}
Comparing the above equation with (\ref{esi2}), one can see that all the
terms of $U^f\dot{\partial}_f\mathcal{H}^b_a$ and $\mathcal{H}^b_a$
are identical except the last term. But since $S$ is a $(\rho,
\eta)$-spray, then using (\ref{IM}) the last term reduces to
$$U^f\dot{\partial}_f\dot{\partial}_c(G^b-\frac{1}{4}F^b)=\dot{\partial}_c(G^b-\frac{1}{4}F^b).$$
Thus the last terms of $U^f\dot{\partial}_f\mathcal{H}^b_a$ and
$\mathcal{H}^b_a$ are identical, too. Therefore
$U^f\dot{\partial}_f\mathcal{H}^b_a=\mathcal{H}^b_a$.
\end{proof}
\begin{lemma}\label{Important}
If $S$ is a $(\rho, \eta)$-spray and $(\rho, \eta)\nabla$ is the
Berwald derivative induced by $\mathcal{H}_S$, then $(\rho,
\eta)\nabla_SU=0$.
\end{lemma}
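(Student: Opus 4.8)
The plan is to reduce everything to the single fact that a $(\rho,\eta)$-spray is horizontal with respect to its own associated projector, namely $\mathcal{H}_S S = S$, and then to feed this into the homogeneity of $\mathcal{H}_S$ already recorded in Lemma \ref{Lemma7.7}. So the first task is to compute $\mathcal{H}_S(S)$ from the defining formula $\mathcal{H}_S(X)=\frac{1}{2}\big(X+[\mathcal{J}_{(g,h)}X,S]_{(\rho,\eta)TE}-\mathcal{J}_{(g,h)}[X,S]_{(\rho,\eta)TE}\big)$ evaluated at $X=S$.

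Two simplifications occur. Since $[,]_{(\rho,\eta)TE}$ is a Lie bracket it is antisymmetric, whence $[S,S]_{(\rho,\eta)TE}=0$ and the last term disappears. Moreover $S$ is the canonical $(\rho,\eta)$-semispray associated to the locally invertible morphism $(g,h)$, so $\mathcal{J}_{(g,h)}S=\mathbb{C}$; I would verify this directly by applying $\mathcal{J}_{(g,h)}$ to the local expression of $S$ and contracting with the relation $\tilde{g}^c_a g^a_b=\delta^c_b$ coming from \eqref{eq39}. Thus $\mathcal{H}_S(S)=\frac{1}{2}\big(S+[\mathbb{C},S]_{(\rho,\eta)TE}\big)$.

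The decisive step is the spray hypothesis: by definition the derivation $[\mathbb{C},S]_{(\rho,\eta)TE}-S$ of a $(\rho,\eta)$-spray is the null section, so $[\mathbb{C},S]_{(\rho,\eta)TE}=S$ and therefore $\mathcal{H}_S(S)=\frac{1}{2}(S+S)=S$. This identifies $S$ as a horizontal section. To conclude, I would invoke Lemma \ref{Lemma7.7}, which tells us that $\mathcal{H}_S$ is homogeneous because $S$ is a spray; by the very definition of a homogeneous horizontal projector this means $(\rho,\eta)\nabla_{\mathcal{H}_S X}U=0$ for every $X$, where $(\rho,\eta)\nabla$ is the Berwald derivative induced by $\mathcal{H}_S$. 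Taking $X=S$ and using $\mathcal{H}_S S=S$ gives $(\rho,\eta)\nabla_S U=(\rho,\eta)\nabla_{\mathcal{H}_S S}U=0$, as required. The only real obstacle is recognizing the algebraic collapse $\mathcal{H}_S S=S$; once the antisymmetry, the semispray identity $\mathcal{J}_{(g,h)}S=\mathbb{C}$, and the spray identity $[\mathbb{C},S]_{(\rho,\eta)TE}=S$ are combined, the remainder is immediate.
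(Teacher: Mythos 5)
Your proof is correct, and it is genuinely different from the one in the paper. The paper proves the lemma by brute-force local computation: it expands $(\rho,\eta)\nabla_S U=\mathcal{A}^b(\tilde{\delta}_b+\dot{\tilde{\partial}}_b)$, then kills $\mathcal{A}^b$ term by term --- the $L^e_{da}$ term by an index-swapping antisymmetry argument, the two $\rho^i\partial_i g$ terms by pairwise cancellation via $g^a_r\tilde{g}^b_a=\delta^b_r$ and relabeling, and the second-derivative term by differentiating the spray condition \eqref{IM} and contracting with $U^c$. You instead argue invariantly: from the defining formula of $\mathcal{H}_S$, antisymmetry of the bracket ($[S,S]_{(\rho,\eta)TE}=0$), the semispray identity $\mathcal{J}_{(g,h)}S=\mathbb{C}$ (which indeed follows from the canonical local form of $S$ and \eqref{eq39}), and the spray identity $[\mathbb{C},S]_{(\rho,\eta)TE}=S$, you get the algebraic collapse $\mathcal{H}_S S=S$; then Lemma \ref{Lemma7.7} (homogeneity of $\mathcal{H}_S$) together with the definition of homogeneity, $(\rho,\eta)\nabla_{\mathcal{H}_S X}U=0$ for all $X$, immediately yields $(\rho,\eta)\nabla_SU=(\rho,\eta)\nabla_{\mathcal{H}_S S}U=0$. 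Two remarks on the trade-off. First, your reading of the homogeneity definition --- that the Berwald derivative there is the one induced by $\mathcal{H}_S$ itself --- is the correct one (it is what Lemma \ref{Lemma7.5} and Proposition \ref{Prop7.6} use), and it matches the derivative named in the statement of Lemma \ref{Important}, so there is no mismatch; it is worth saying this explicitly since the definition is terse. Second, the paper's computation produces as a byproduct the explicit identity $(g^a_d\circ h\circ\pi)U^d\mathcal{H}^b_a=-2(G^b-\tfrac{1}{4}F^b)$, which is cited again in the proof of the final theorem; your relation $\mathcal{H}_S S=S$ is exactly this identity in disguise (expand $\mathcal{H}_S S$ in the natural $(\rho,\eta)$-base), so your route recovers it too, but one should record that coordinate form if the later proof is to quote it.
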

\begin{proof}
We have
\begin{align*}
(\rho, \eta)\nabla_SU=\tilde{\rho}(S)(U^b)(\tilde{\delta}_b+\dot{\tilde{\partial}}_b)+U^b\Big((\rho, \eta)\nabla_S\tilde{\delta}_b+(\rho, \eta)\nabla_S\dot{\tilde{\partial}}_b\Big).
\end{align*}
Since $(\rho, \eta)\nabla_{\dot{\tilde{\partial}}_a}\tilde{\delta}_b=(\rho, \eta)\nabla_{\dot{\tilde{\partial}}_a}\dot{\tilde{\partial}}_b=0$, and
\begin{align*}
(\rho, \eta)\nabla_{\tilde{\delta}_a}\tilde{\delta}_b&=\dot{\partial}_b((\rho, \eta)\Gamma^c_a)\tilde{\delta}_c=-\dot{\partial}_b\mathcal{H}^c_a\tilde{\delta}_c,\\
(\rho, \eta)\nabla_{\tilde{\delta}_a}\dot{\tilde{\partial}}_b&=\dot{\partial}_b((\rho, \eta)\Gamma^c_a)\dot{\tilde{\partial}}_c=-\dot{\partial}_b\mathcal{H}^c_a\dot{\tilde{\partial}}_c,
\end{align*}
then we obtain
\[
(\rho, \eta)\nabla_SU=\mathcal{A}^b(\tilde{\delta}_b+\dot{\tilde{\partial}}_b),
\]
where
\begin{align}\label{esi0}
\mathcal{A}^b&=-2(G^b-\frac{1}{4}F^b)-\frac{1}{2}U^mU^r(g^a_r\circ h\circ\pi)\Big((g^d_m\circ h\circ\pi)(\tilde{g}^b_e\circ h\circ\pi)(L^e_{da}\circ h\circ\pi)\nonumber\\
&\ \ \ -(\rho^i_a\circ h\circ\pi)(\tilde{g}^b_e\circ h\circ\pi)\partial_i(g^e_m\circ h\circ\pi)-2(\tilde{g}^c_a\circ h\circ\pi)\dot{\partial}_m\dot{\partial}_c(G^b-\frac{1}{4}F^b)\nonumber\\
&\ \ \ -(g^c_m\circ h\circ\pi)(\rho^i_c\circ h\circ\pi)\partial_i(\tilde{g}^b_a\circ h\circ\pi)\Big).
\end{align}
But we have
\begin{align*}
U^mU^r((g^a_rg^d_m\tilde{g}^b_eL^e_{da})\circ h\circ\pi)=-U^mU^r((g^a_rg^d_m\tilde{g}^b_eL^e_{ad})\circ h\circ\pi).
\end{align*}
Replacing $r\leftrightarrow m$ and $d\leftrightarrow a$ in the right side of the above equation implies that
\begin{align*}
U^mU^r((g^a_rg^d_m\tilde{g}^b_eL^e_{da})\circ h\circ\pi)=-U^rU^m((g^d_mg^a_r\tilde{g}^b_eL^e_{da})\circ h\circ\pi),
\end{align*}
attains
\begin{align}\label{esi00}
U^mU^r((g^a_rg^d_m\tilde{g}^b_eL^e_{da})\circ h\circ\pi)=0.
\end{align}
Since $g^a_r\tilde{g}^b_a=\delta^b_r$, then we deduce that
\[
U^mU^r((g^a_rg^c_m\rho^i_c)\circ h\circ\pi)\partial_i(\tilde{g}^b_a\circ h\circ\pi)=-U^mU^r((g^c_m\tilde{g}^b_a\rho^i_c)\circ h\circ\pi)\partial_i(g^a_r\circ h\circ\pi).
\]
Replacing $a\rightarrow e$, $c\rightarrow a$ and $r\leftrightarrow m$ in the right side of the above equation yields
\begin{align}\label{esi000}
U^mU^r((g^a_rg^c_m\rho^i_c)\circ h\circ\pi)\partial_i(\tilde{g}^b_a\circ h\circ\pi)=-U^rU^m((g^a_r\tilde{g}^b_e\rho^i_a)\circ h\circ\pi)\partial_i(g^e_m\circ h\circ\pi).
\end{align}
We have
\begin{align}\label{esi4}
U^mU^r(g^a_r\circ h\circ\pi)(\tilde{g}^c_a\circ h\circ\pi)\dot{\partial}_m\dot{\partial}_c(G^b-\frac{1}{4}F^b)=U^mU^c\dot{\partial}_m\dot{\partial}_c(G^b-\frac{1}{4}F^b).
\end{align}
Since $S$ is $(\rho, \eta)$-spray, then we have (\ref{IM}). Differentiating (\ref{IM}) with respect to the $y^c$ and then multiplying $U^c$ in it, yield
\[
U^cU^b\dot{\partial}_c\dot{\partial}_b(G^a-\frac{1}{4}F^a)=U^c\dot{\partial}_c(G^a-\frac{1}{4}F^a)=2(G^a-\frac{1}{4}F^a).
\]
Setting the above equation in (\ref{esi4}), results
\begin{align}\label{esi0000}
U^mU^r(g^a_r\circ h\circ\pi)(\tilde{g}^c_a\circ h\circ\pi)\dot{\partial}_m\dot{\partial}_c(G^b-\frac{1}{4}F^b)=2(G^b-\frac{1}{4}F^b).
\end{align}
Setting (\ref{esi00}), (\ref{esi000}) and (\ref{esi0000}) in (\ref{esi0}), yields $\mathcal{A}^b=0$ and consequently $(\rho, \eta)\nabla_SU=0$.
\end{proof}
\begin{lemma}
Let $f$ be a function of class $C^1$ on $E$ and $S$ be the canonical $\left( \rho ,\eta \right) $-semispray associated to
the mechanical $\left( \rho ,\eta \right) $-system $\left( \left( E,\pi
,M\right) ,F_{e},\left( \rho ,\eta \right) \Gamma \right) $ and locally invertible vector bundles morphism $\left( g,h\right) $. Then $\bar{S}=S+f\mathbb{C}$ is a $\left( \rho ,\eta \right) $-spray if and only if $f$ is 1-homogenous.
\end{lemma}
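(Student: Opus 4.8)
The plan is to apply the definition of $(\rho,\eta)$-spray directly to $\bar{S}$: I must check that $\bar{S}\circ 0$ is of class $C^{1}$ and that the derivation $[\mathbb{C},\bar{S}]_{(\rho,\eta)TE}-\bar{S}$ is the null section. The regularity condition is immediate, since $\mathbb{C}=U^{a}\dot{\tilde{\partial}}_{a}$ vanishes along the null section (there $U^{a}=y^{a}=0$); hence $\bar{S}\circ 0=S\circ 0$, which is $C^{1}$ by the hypothesis on $S$. Thus the whole problem reduces to evaluating the derivation of $\bar{S}$, and the heart of the argument is a single bracket computation.

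First I would exploit the bilinearity of $[\,,\,]_{(\rho,\eta)TE}$ to split $[\mathbb{C},\bar{S}]_{(\rho,\eta)TE}-\bar{S}=\big([\mathbb{C},S]_{(\rho,\eta)TE}-S\big)+\big([\mathbb{C},f\mathbb{C}]_{(\rho,\eta)TE}-f\mathbb{C}\big)$. The first parenthesis is exactly the derivation of $S$. For the second, I would apply the Leibniz rule of the Lie algebroid structure $\big([\,,\,]_{(\rho,\eta)TE},(\tilde{\rho},Id_{E})\big)$ on the generalized tangent bundle, treating $\mathbb{C}$ as a single section and $f$ as the scalar factor: $[\mathbb{C},f\mathbb{C}]_{(\rho,\eta)TE}=f[\mathbb{C},\mathbb{C}]_{(\rho,\eta)TE}+\Gamma(\tilde{\rho},Id_{E})(\mathbb{C})(f)\cdot\mathbb{C}$. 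Antisymmetry kills the first term, and since $\tilde{\rho}$ sends $\dot{\tilde{\partial}}_{a}$ to $\dot{\partial}_{a}$ one has $\Gamma(\tilde{\rho},Id_{E})(\mathbb{C})(f)=U^{a}\dot{\partial}_{a}f$; hence $[\mathbb{C},f\mathbb{C}]_{(\rho,\eta)TE}-f\mathbb{C}=(U^{a}\dot{\partial}_{a}f-f)\mathbb{C}$.

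Putting the pieces together, $[\mathbb{C},\bar{S}]_{(\rho,\eta)TE}-\bar{S}=\big([\mathbb{C},S]_{(\rho,\eta)TE}-S\big)+(U^{a}\dot{\partial}_{a}f-f)\mathbb{C}$. Since $S$ is a $(\rho,\eta)$-spray, its derivation $[\mathbb{C},S]_{(\rho,\eta)TE}-S$ is the null section, so the derivation of $\bar{S}$ equals $(U^{a}\dot{\partial}_{a}f-f)\mathbb{C}$. Because $\mathbb{C}=U^{a}\dot{\tilde{\partial}}_{a}$ is nonzero off the null section and $f$ is $C^{1}$, this vanishes identically if and only if the scalar factor $U^{a}\dot{\partial}_{a}f-f$ vanishes, i.e. $U^{a}\dot{\partial}_{a}f=f$, which by \eqref{esi} is exactly the $1$-homogeneity of $f$. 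Both implications then follow at once.

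The step I expect to be the main obstacle is the correct evaluation of $[\mathbb{C},f\mathbb{C}]_{(\rho,\eta)TE}$: one must use the anchor $\tilde{\rho}$ of the generalized tangent bundle (not $\rho$) in the Leibniz rule and read off $\Gamma(\tilde{\rho},Id_{E})(\mathbb{C})(f)=U^{a}\dot{\partial}_{a}f$ from the action of $\tilde{\rho}$ on the vertical base. A secondary point worth flagging is that the clean equivalence genuinely uses $[\mathbb{C},S]_{(\rho,\eta)TE}-S=0$, i.e. that the canonical semispray $S$ is in fact a $(\rho,\eta)$-spray (condition \eqref{IM}); were $S$ only a semispray with nonzero derivation $2\big(2(G^{a}-\tfrac14F^{a})-U^{b}\dot{\partial}_{b}(G^{a}-\tfrac14F^{a})\big)\dot{\tilde{\partial}}_{a}$, this term would have to be carried along and the homogeneity of $f$ read relative to it.
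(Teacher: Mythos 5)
Your proof is correct, and it takes a genuinely different route from the paper's. The paper argues in local coordinates: it writes $\bar{S}$ in the natural base, observes that it again has the canonical-semispray form with vertical part $fU^{a}-2(G^{a}-\frac{1}{4}F^{a})$, applies the coordinate criterion (\ref{IM}) to $\bar{S}$, and then invokes (\ref{IM}) for $S$ to reduce everything to $U^{a}U^{b}\dot{\partial}_{b}f=fU^{a}$, i.e.\ $(\rho,\eta)\nabla^{v}_{U}f=f$. You instead compute the derivation intrinsically: bilinearity splits $[\mathbb{C},\bar{S}]_{(\rho,\eta)TE}-\bar{S}$ into the derivation of $S$ plus $[\mathbb{C},f\mathbb{C}]_{(\rho,\eta)TE}-f\mathbb{C}$, and the Leibniz rule with anchor $\tilde{\rho}$ together with antisymmetry yields $(U^{a}\dot{\partial}_{a}f-f)\mathbb{C}$; this agrees with what one gets by expanding the defining formula for $[\,,\,]_{(\rho,\eta)TE}$ on the pair $(\mathbb{C},f\mathbb{C})$, so the step you flagged as the main obstacle is sound. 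What your route buys: it never rewrites $\bar{S}$ in coordinates, nor does it apply criterion (\ref{IM}) --- which the paper strictly established only for canonical semisprays --- to the modified section $\bar{S}$; it works verbatim for an arbitrary $(\rho,\eta)$-spray $S$ rather than only the canonical one; it makes completely explicit where the vanishing of the derivation of $S$ enters; and it addresses the $C^{1}$-regularity of $\bar{S}\circ 0$, which the paper silently omits. What the paper's route buys is self-containedness given its Lemma (\ref{IM}), with no appeal to the Leibniz identity of the Lie algebroid structure. Both proofs end with the same cancellation of $\mathbb{C}$ off the null section, and both --- as you rightly point out --- really use that the canonical semispray $S$ is a spray (so that its derivation vanishes), even though the statement only says ``semispray''; that slippage is present in the paper's own proof as well (``Since $S$ is $(\rho,\eta)$-spray, then using (\ref{IM})\,\ldots''), so your explicit flagging of it is a genuine improvement in precision.
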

\begin{proof}
Using the locally expressions of $S$ and $\mathbb{C}$, they yield
\begin{align*}
\bar{S}=(g^a_b\circ
h\circ\pi)U^b\tilde{\partial}_a+(fU^a-2(G^a-\frac{1}{4}F^a))\dot{\tilde{\partial}}_a.
\end{align*}
Thus $\bar{S}$ is $\left( \rho ,\eta \right) $-spray if and only if
\begin{align}\label{P1}
U^b\dot{\partial}_b(fU^a-2(G^a-\frac{1}{4}F^a))=2(fU^a-2(G^a-\frac{1}{4}F^a)).
\end{align}
Since $S$ is $\left( \rho ,\eta \right) $-spray, then using (\ref{IM}), it concludes
\begin{align*}
U^b\dot{\partial}_b(fU^a-2(G^a-\frac{1}{4}F^a))&=U^aU^b\dot{\partial}_bf+fU^a-4(G^a-\frac{1}{4}F^a)\\
&=U^aU^b\dot{\partial}_bf-fU^a+2(fU^a-2(G^a-\frac{1}{4}F^a)).
\end{align*}
Two above equations, have an outcome that $\bar{S}$ is $\left( \rho ,\eta \right) $-spray if and only if
\begin{align*}
U^aU^b\dot{\partial}_bf=fU^a.
\end{align*}
But the above equation is equaivalent to $((\rho, \eta)\nabla^v_Uf)\mathbb{C}=f\mathbb{C}$. Since $\mathbb{C}$ is a nonzero section, then $\bar{S}$ is $\left( \rho ,\eta \right) $-spray if and only if $(\rho, \eta)\nabla^v_Uf=f$.
\end{proof}
\begin{defn}
Two $(\rho, \eta)$-sprays $S$ and $\bar{S}$ are called projectively
related if there exists a $1$-homogenous  function $f$ of class $C^1$
on $E$ such that $\bar{S}=S+f\mathbb{C}$. In this case we say that
$\bar{S}$ is a projective change of $S$.
\end{defn}
\begin{theorem}
If $\bar{S}=S+f\mathbb{C}$ is a projectively change of the $(\rho, \eta)$-spray $S$, then the horizontal projectors associated to $S$ and $\bar{S}$ are related by
\begin{equation}\label{IM1}
\mathcal{H}_{\bar{S}}=\mathcal{H}_S+\frac{1}{2}(f\mathcal{J}_{(g,
h)}+(\rho, \eta)\nabla^v_{\mathcal{J}_{(g, h)}}f\otimes\mathbb{C}).
\end{equation}
\end{theorem}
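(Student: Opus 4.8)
The plan is to insert $\bar{S}=S+f\mathbb{C}$ directly into the defining formula for the horizontal projector and exploit the bilinearity of $[\,,\,]_{(\rho,\eta)TE}$ together with the $\mathcal{F}(E)$-linearity of $\mathcal{J}_{(g,h)}$. Since
\[
\mathcal{H}_{\bar{S}}(X) = \tfrac{1}{2}\Big(X + [\mathcal{J}_{(g,h)}X, S + f\mathbb{C}]_{(\rho,\eta)TE} - \mathcal{J}_{(g,h)}[X, S + f\mathbb{C}]_{(\rho,\eta)TE}\Big),
\]
splitting each bracket along the sum $S+f\mathbb{C}$ immediately isolates $\mathcal{H}_S(X)$ plus the correction term
\[
\tfrac{1}{2}\Big([\mathcal{J}_{(g,h)}X, f\mathbb{C}]_{(\rho,\eta)TE} - \mathcal{J}_{(g,h)}[X, f\mathbb{C}]_{(\rho,\eta)TE}\Big),
\]
so everything reduces to evaluating this correction.

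Next I would apply the Leibniz rule for the anchored bracket, $[Y,fZ]_{(\rho,\eta)TE}=f[Y,Z]_{(\rho,\eta)TE}+\Gamma(\tilde{\rho},Id_{E})(Y)f\cdot Z$, to both terms. In the second term the $\mathcal{F}(E)$-linearity of $\mathcal{J}_{(g,h)}$ together with $\mathcal{J}_{(g,h)}\mathbb{C}=0$ (which holds because $\mathbb{C}=U^{a}\dot{\tilde{\partial}}_{a}$ is vertical and $\mathcal{J}_{(g,h)}\dot{\tilde{\partial}}_{b}=0$) kills the scalar-derivative piece, leaving $f\mathcal{J}_{(g,h)}[X,\mathbb{C}]_{(\rho,\eta)TE}$. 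The correction thus collapses to
\[
\tfrac{1}{2}\Big(f\big([\mathcal{J}_{(g,h)}X, \mathbb{C}]_{(\rho,\eta)TE} - \mathcal{J}_{(g,h)}[X, \mathbb{C}]_{(\rho,\eta)TE}\big) + \big(\Gamma(\tilde{\rho}, Id_{E})(\mathcal{J}_{(g,h)}X)f\big)\mathbb{C}\Big).
\]

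The crux, and the step I expect to be the main obstacle, is the identity
\[
[\mathcal{J}_{(g,h)}X, \mathbb{C}]_{(\rho,\eta)TE} - \mathcal{J}_{(g,h)}[X, \mathbb{C}]_{(\rho,\eta)TE} = \mathcal{J}_{(g,h)}X,
\]
which is the $(\rho,\eta)$-analogue of $\mathcal{L}_{\mathbb{C}}\mathcal{J}_{(g,h)}=-\mathcal{J}_{(g,h)}$. I would verify it on the adapted base $\{\tilde{\delta}_{a},\dot{\tilde{\partial}}_{a}\}$: for $X=\dot{\tilde{\partial}}_{a}$ both sides vanish (using $\dot{\partial}_{b}U^{c}=\delta^{c}_{b}$ from Remark \ref{GodGod} and $[\dot{\tilde{\partial}}_{a},\dot{\tilde{\partial}}_{c}]_{(\rho,\eta)TE}=0$), while for $X=\tilde{\delta}_{a}$ one computes $[\mathcal{J}_{(g,h)}\tilde{\delta}_{a}, \mathbb{C}]_{(\rho,\eta)TE}=(\tilde{g}^{c}_{a}\circ h\circ\pi)\dot{\tilde{\partial}}_{c}=\mathcal{J}_{(g,h)}\tilde{\delta}_{a}$, where one invokes $\dot{\partial}_{b}U^{c}=\delta^{c}_{b}$, $\partial_{i}U^{c}=0$, and the fiber-independence of $\tilde{g}^{c}_{a}\circ h\circ\pi$; meanwhile $[\tilde{\delta}_{a}, \mathbb{C}]_{(\rho,\eta)TE}$ turns out purely vertical, so $\mathcal{J}_{(g,h)}[\tilde{\delta}_{a}, \mathbb{C}]_{(\rho,\eta)TE}=0$. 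The delicate bookkeeping is organizing the Leibniz contributions arising from the structure equation $[\tilde{\delta}_{a},\dot{\tilde{\partial}}_{c}]_{(\rho,\eta)TE}$ of Section 2 and from $\Gamma(\tilde{\rho},Id_{E})(\tilde{\delta}_{a})U^{c}=-(\rho,\eta)\Gamma^{c}_{a}$.

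Finally, with the identity in hand the correction becomes $\tfrac{1}{2}\big(f\mathcal{J}_{(g,h)}X + (\Gamma(\tilde{\rho},Id_{E})(\mathcal{J}_{(g,h)}X)f)\mathbb{C}\big)$. Since $\mathcal{J}_{(g,h)}X$ is vertical, $\mathcal{V}(\mathcal{J}_{(g,h)}X)=\mathcal{J}_{(g,h)}X$, so by the definition of the $v$-derivative $\Gamma(\tilde{\rho},Id_{E})(\mathcal{J}_{(g,h)}X)f=(\rho,\eta)\nabla^{v}_{\mathcal{J}_{(g,h)}X}f$. Recognizing the resulting operator as $\tfrac{1}{2}\big(f\mathcal{J}_{(g,h)}+(\rho,\eta)\nabla^{v}_{\mathcal{J}_{(g,h)}}f\otimes\mathbb{C}\big)$ applied to $X$ yields \eqref{IM1}. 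I note that the $1$-homogeneity of $f$ is what guarantees $\bar{S}$ is again a $(\rho,\eta)$-spray by the preceding lemma, but it is not needed in the projector computation itself, so the formula in fact holds for the semispray $\bar{S}$ associated to any $f$.
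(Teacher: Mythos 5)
Your proof is correct, but it takes a genuinely different route from the paper's. The paper argues entirely in local components: it expands $\mathcal{H}_{\bar{S}}(X)=X^{a}\tilde{\partial}_{a}+X^{a}\bar{\mathcal{H}}^{b}_{a}\dot{\tilde{\partial}}_{b}$, observes that $\bar{\mathcal{H}}^{b}_{a}$ is obtained from the component formula \eqref{esi2} for $\mathcal{H}^{b}_{a}$ by replacing $-2(G^{b}-\frac{1}{4}F^{b})$ with $fU^{b}-2(G^{b}-\frac{1}{4}F^{b})$, so that $\bar{\mathcal{H}}^{b}_{a}-\mathcal{H}^{b}_{a}=\frac{1}{2}(\tilde{g}^{c}_{a}\circ h\circ\pi)\dot{\partial}_{c}(fU^{b})=\frac{1}{2}(\tilde{g}^{c}_{a}\circ h\circ\pi)U^{b}\dot{\partial}_{c}f+\frac{1}{2}f(\tilde{g}^{b}_{a}\circ h\circ\pi)$, and then recognizes these two terms as $((\rho,\eta)\nabla^{v}_{\mathcal{J}_{(g,h)}X}f)\,\mathbb{C}$ and $f\mathcal{J}_{(g,h)}X$. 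You instead work intrinsically: additivity of the bracket isolates the correction $\frac{1}{2}([\mathcal{J}_{(g,h)}X,f\mathbb{C}]_{(\rho,\eta)TE}-\mathcal{J}_{(g,h)}[X,f\mathbb{C}]_{(\rho,\eta)TE})$; the Leibniz rule of the anchored bracket together with $\mathcal{J}_{(g,h)}\mathbb{C}=0$ reduces this to $\frac{1}{2}\bigl(f([\mathcal{J}_{(g,h)}X,\mathbb{C}]-\mathcal{J}_{(g,h)}[X,\mathbb{C}])+((\rho,\eta)\nabla^{v}_{\mathcal{J}_{(g,h)}X}f)\mathbb{C}\bigr)$; and your Liouville-type identity $[\mathcal{J}_{(g,h)}X,\mathbb{C}]-\mathcal{J}_{(g,h)}[X,\mathbb{C}]=\mathcal{J}_{(g,h)}X$ finishes the argument. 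That identity is true, and your basis verification is sound (it uses exactly $\dot{\partial}_{b}U^{c}=\delta^{c}_{b}$, $\partial_{i}U^{c}=0$, the fiber-constancy of $\tilde{g}^{b}_{a}\circ h\circ\pi$, and the verticality of $[\tilde{\delta}_{a},\mathbb{C}]_{(\rho,\eta)TE}$); the one point you leave tacit is why checking on a base suffices, namely that $X\mapsto[\mathcal{J}_{(g,h)}X,\mathbb{C}]-\mathcal{J}_{(g,h)}[X,\mathbb{C}]$ is $\mathcal{F}(E)$-linear, which holds because the scalar-derivative terms produced by the Leibniz rule cancel between the two brackets; add that sentence. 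As for what each approach buys: the paper's proof is a short consequence of the already-established formula \eqref{esi2} but is tied to coordinates, whereas yours avoids components except in the basis check, exhibits structurally where the two correction terms come from, and makes explicit the (correct) observation that the $1$-homogeneity of $f$ is never used in the projector computation itself, only in guaranteeing that $\bar{S}$ is again a $(\rho,\eta)$-spray.
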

\begin{proof}
Let $X=X^a\tilde{\partial}_a+\tilde{X}^a\dot{\tilde{\partial}}_a$.
Thus
\begin{align}\label{p2}
\mathcal{H}_{\bar
S}(X)=X^a\mathcal{H}_{\bar{S}}(\tilde{\partial}_a)=X^a\tilde{\partial}_a+X^a\bar{\mathcal{H}}^b_a\dot{\tilde{\partial}}_b,
\end{align}
where
\begin{align*}
\bar{\mathcal{H}}^b_a&=\frac{1}{2}[U^c((g^d_c\tilde{g}^b_eL^e_{da})\circ h\circ\pi)-U^c(\rho^i_a\circ h\circ\pi)(\partial_i(g^e_c\circ h\circ\pi))(\tilde{g}^b_e\circ h\circ\pi)\nonumber\\
&\ \ \ -(g^c_d\circ h\circ\pi)U^d(\rho^i_c\circ h\circ\pi)(\partial_i(\tilde{g}^b_a\circ h\circ\pi))\\
&\ \ \ +(\tilde{g}^c_a\circ h\circ\pi)\dot{\partial}_c(fU^b-2(G^b-\frac{1}{4}F^b))].
\end{align*}
This equation together with (\ref{esi2}), give
\begin{align*}
\bar{\mathcal{H}}^b_a=\mathcal{H}^b_a+\frac{1}{2}(\tilde{g}^c_a\circ h\circ\pi)\dot{\partial}_c(fU^b)=\mathcal{H}^b_a
+\frac{1}{2}(\tilde{g}^c_a\circ h\circ\pi)U^b\dot{\partial}_cf+\frac{1}{2}f(\tilde{g}^b_a\circ h\circ\pi).
\end{align*}
Setting this equation in (\ref{p2}), we get
\begin{align*}
\mathcal{H}_{\bar S}(X)&=\mathcal{H}_S(X)+\frac{1}{2}X^a((\tilde{g}^c_a\circ h\circ\pi)U^b\dot{\partial}_cf+f(\tilde{g}^b_a\circ h\circ\pi))\dot{\tilde{\partial}}_b\\
&=\mathcal{H}_S(X)+\frac{1}{2}(f\mathcal{J}_{(g, h)}(X)+(\rho,
\eta)\nabla^v_{\mathcal{J}_{(g, h)}(X)}f\mathbb{C}).
\end{align*}
\end{proof}
\begin{theorem}
If $\bar{S}=S+f\mathbb{C}$ is a projectively change of the $(\rho,
\eta)$-spray $S$, then the mixed curvature of the Berwald
derivatives induced by $\mathcal{H}_S$ and $\mathcal{H}_{\bar S}$
are related by
\begin{align*}
\overline{\overset{\circ}{\mathbb{P}}}(X,
Y)Z&=\overset{\circ}{\mathbb{P}}(X,
Y)Z-(\dot{\tilde{X}}^dY^cZ^b\dot{\partial}_d\dot{\partial}_bA^a_c+X^fY^cA^d_fZ^b\tilde{g}^e_c\dot{\partial}_d\dot{\partial}_b\dot{\partial}_e(G^a-\frac{1}{4}F^a))
\tilde{\delta}_a\\
&\ \ \
+(-\dot{\tilde{X}}^dY^cZ^b(A^a_e\dot{\partial}_d\dot{\partial}_bA^e_c-\tilde{g}^f_c\dot{\partial}_d\dot{\partial}_b\dot{\partial}_f(G^e-\frac{1}{4}F^e)A^a_e)\\
&\ \ \
-\dot{\tilde{X}}^dY^c\tilde{Z}^b\dot{\partial}_d\dot{\partial}_bA^a_c
-X^fY^cA^d_f(Z^tA^b_t+\dot{\tilde{Z}}^b)\tilde{g}^e_c\dot{\partial}_d\dot{\partial}_b\dot{\partial}_e(G^a-\frac{1}{4}F^a)\\
&\ \ \
-\dot{\tilde{X}}^dY^cZ^fA^b_f\tilde{g}^e_c\dot{\partial}_d\dot{\partial}_b\dot{\partial}_e(G^a-\frac{1}{4}F^a))\dot{\tilde{\partial}}_a,
\end{align*}
where
$X=X^d\bar{\tilde{\delta}}_d+\dot{\tilde{X}}^d\dot{\partial}_d$ and
$Y=Y^c\bar{\tilde{\delta}}_c+\dot{\tilde{Y}}^c\dot{\partial}_c$,
$Z=Z^b\bar{\tilde{\delta}}_b+\dot{\tilde{Z}}^b\dot{\partial}_b$ and
\[
A^b_a=\frac{1}{2}(\tilde{g}^c_a\circ
h\circ\pi)U^b\dot{\partial}_cf+\frac{1}{2}f(\tilde{g}^b_a\circ
h\circ\pi).
\]
\end{theorem}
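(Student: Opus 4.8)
The plan is to reduce the whole statement to a single change-of-frame substitution inside the Berwald mixed-curvature formula. First I would extract the two structural consequences of the preceding theorem. Since $\mathcal{H}_S(\tilde{\partial}_a)=\tilde{\partial}_a+\mathcal{H}^b_a\dot{\tilde{\partial}}_b$ and $\tilde{\delta}_a=\tilde{\partial}_a-(\rho,\eta)\Gamma^b_a\dot{\tilde{\partial}}_b$ while $\mathcal{H}(\tilde{\partial}_a)=\tilde{\delta}_a$, we get $(\rho,\eta)\Gamma^b_a=-\mathcal{H}^b_a$. Feeding $\bar{\mathcal{H}}^b_a=\mathcal{H}^b_a+A^b_a$ (the component form of (\ref{IM1})) into this identity shows that the $(\rho,\eta)$-connection induced by $\bar{S}$ has coefficients $-\bar{\mathcal{H}}^b_a=(\rho,\eta)\Gamma^b_a-A^b_a$, and consequently the barred adapted horizontal base satisfies
\begin{equation*}
\bar{\tilde{\delta}}_a=\tilde{\partial}_a-\big((\rho,\eta)\Gamma^b_a-A^b_a\big)\dot{\tilde{\partial}}_b=\tilde{\delta}_a+A^b_a\dot{\tilde{\partial}}_b .
\end{equation*}
So, passing from $S$ to $\bar{S}$, the horizontal base is shifted purely vertically by $A^b_a$, and the connection function is shifted by $-A^b_a$.

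Second, I would compute the Berwald mixed-curvature components. The Berwald linear $(\rho,\eta)$-connection is normal with $V=\tilde{V}=0$ and $H^a_{bc}=\tilde{H}^a_{bc}=\dot{\partial}_b((\rho,\eta)\Gamma^a_c)$, so (\ref{reclaim2}) collapses to
\begin{equation*}
\mathbb{P}^a_{~b~cd}=\tilde{\mathbb{P}}^a_{~b~cd}=\dot{\partial}_d\dot{\partial}_b\big((\rho,\eta)\Gamma^a_c\big),
\end{equation*}
and the barred version is $\overline{\mathbb{P}}^a_{~b~cd}=\mathbb{P}^a_{~b~cd}-\dot{\partial}_d\dot{\partial}_bA^a_c$. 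Here the crucial observation is that, in the expression (\ref{esi2}) for $(\rho,\eta)\Gamma^a_c=-\mathcal{H}^a_c$, every summand except the last is at most linear in the Liouville components $U^\bullet$, hence is killed by any two vertical derivatives because $\dot{\partial}_b\dot{\partial}_dU^\bullet=0$ (Remark \ref{GodGod}). Only the $\big(G-\frac{1}{4}F\big)$-term survives, giving
\begin{equation*}
\mathbb{P}^a_{~b~cd}=(\tilde{g}^e_c\circ h\circ\pi)\,\dot{\partial}_d\dot{\partial}_b\dot{\partial}_e\big(G^a-\tfrac{1}{4}F^a\big),
\end{equation*}
which is exactly the tensor appearing in the statement.

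The core step is then to evaluate $\overline{\overset{\circ}{\mathbb{P}}}(X,Y)Z$ in the barred base and $\overset{\circ}{\mathbb{P}}(X,Y)Z$ in the unbarred base, and to write both in the common base $\{\tilde{\delta}_a,\dot{\tilde{\partial}}_a\}$. Rewriting $X=X^d\bar{\tilde{\delta}}_d+\dot{\tilde{X}}^d\dot{\tilde{\partial}}_d$ via the frame relation gives $X=X^d\tilde{\delta}_d+(X^eA^d_e+\dot{\tilde{X}}^d)\dot{\tilde{\partial}}_d$, and likewise for $Z$; thus the unbarred vertical components of $X$ and $Z$ pick up the shifts $X^eA^d_e$ and $Z^eA^b_e$, whereas all horizontal components (including those of $Y$, which enters only through $\mathcal{H}Y$ as the scalar $Y^c$) are unchanged. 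I would then substitute $\overline{\mathbb{P}}^a_{~b~cd}=\mathbb{P}^a_{~b~cd}-\dot{\partial}_d\dot{\partial}_bA^a_c$ and $\bar{\tilde{\delta}}_a=\tilde{\delta}_a+A^g_a\dot{\tilde{\partial}}_g$ into
\begin{equation*}
\overline{\overset{\circ}{\mathbb{P}}}(X,Y)Z=\dot{\tilde{X}}^dY^cZ^b\overline{\mathbb{P}}^a_{~b~cd}\bar{\tilde{\delta}}_a+\dot{\tilde{X}}^dY^c\dot{\tilde{Z}}^b\overline{\mathbb{P}}^a_{~b~cd}\dot{\tilde{\partial}}_a,
\end{equation*}
and subtract $\overset{\circ}{\mathbb{P}}(X,Y)Z$ with the shifted vertical components $X^eA^d_e+\dot{\tilde{X}}^d$ and $Z^eA^b_e+\dot{\tilde{Z}}^b$ inserted. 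In the difference the unmodified $\mathbb{P}$-terms $\dot{\tilde{X}}^dY^cZ^b\mathbb{P}^a_{~b~cd}$ and $\dot{\tilde{X}}^dY^c\dot{\tilde{Z}}^b\mathbb{P}^a_{~b~cd}$ cancel, and collecting the coefficients of $\tilde{\delta}_a$ and $\dot{\tilde{\partial}}_a$ — after replacing each surviving $\mathbb{P}$ by its explicit $(\tilde{g}^e_c\circ h\circ\pi)\dot{\partial}_d\dot{\partial}_b\dot{\partial}_e(G^\bullet-\frac{1}{4}F^\bullet)$ form — reproduces the stated formula line by line.

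The only genuinely delicate point is the simultaneous bookkeeping of the two $A^b_a$-governed changes, namely the shift $\tilde{\delta}_a\mapsto\bar{\tilde{\delta}}_a$ of the horizontal base and the induced shift of the vertical components of $X$ and $Z$: both are driven by the same $A^b_a$ and must be combined without double counting the mixed contributions (the terms in which $A$ enters through both $\bar{\tilde{\delta}}$ and a shifted component). Everything else is the routine multilinear expansion above, using only $\mathbb{P}=\tilde{\mathbb{P}}$ for the Berwald connection and the $U$-linearity that annihilates the non-spray part of $(\rho,\eta)\Gamma^a_c$ under two vertical derivatives.
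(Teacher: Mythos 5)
Your proposal is correct and follows essentially the same route as the paper's own proof: express $X,Y,Z$ in the common unbarred adapted base via $\bar{\tilde{\delta}}_a=\tilde{\delta}_a+A^b_a\dot{\tilde{\partial}}_a$, use $\bar{\mathcal{H}}^b_a=\mathcal{H}^b_a+A^b_a$ so that the barred mixed-curvature components become $\mathbb{P}^a_{~b~cd}-\dot{\partial}_d\dot{\partial}_bA^a_c$, reduce $\mathbb{P}^a_{~b~cd}$ to $(\tilde{g}^e_c\circ h\circ\pi)\dot{\partial}_d\dot{\partial}_b\dot{\partial}_e(G^a-\frac{1}{4}F^a)$ because the $U$-linear terms of $\mathcal{H}^a_c$ die under two vertical derivatives, and subtract. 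Your explicit justification of that last reduction (which the paper dismisses as ``easy to see'' from its equation (\ref{esi2})) is the only substantive addition, and the multilinear bookkeeping you describe does reproduce the stated formula term by term.
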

\begin{proof}
Since
$X=X^a\bar{\tilde{\delta}}_a+\dot{\tilde{X}}^a\dot{\partial}_a$  and
$\bar{\tilde{\delta}}_a=\tilde{{\delta}}_a+A^c_a\dot{{\partial}}_a$, 
then $X$ has the locally expression
$X=X^a\tilde{{\delta}}_a+(X^cA^a_c+\dot{\tilde{X}}^a)\dot{\partial}_a$  with respect to the $\{\bar{\tilde{\delta}}_a,
\dot{\tilde{\partial}}_a\}$.
Similarly, 
$Y=Y^a\tilde{{\delta}}_a+(Y^cA^a_c+\dot{\tilde{Y}}^a)\dot{\partial}_a$
and
$Z=Z^a\tilde{{\delta}}_a+(Z^cA^a_c+\dot{\tilde{Z}}^a)\dot{\partial}_a$.
Using these and the definition of the mixed curvature of $(\rho,
\eta)\nabla$, we get
\begin{align}\label{lili}
\overset{\circ}{\mathbb{P}}(X, Y)Z&=(\rho, \eta,
h)\mathbb{R}(\mathcal{V}_{ S}X,
\mathcal{H}_{S}Y)Z\nonumber\\
&=Y^c(X^fA^d_f+\dot{\tilde{X}}^d)(\rho, \eta,
h)\mathbb{R}(\dot{\tilde{\partial}}_d,
\tilde{\delta}_c)(Z^b\tilde{\delta}_b+(Z^eA^b_e+\dot{\tilde{Z}}^b)\tilde{\dot{\partial}}_b)\nonumber\\
&=
-Y^c(X^fA^d_f+\dot{\tilde{X}}^d)Z^b\dot{\partial}_d\dot{\partial}_bH^a_c\tilde{\delta}_a\nonumber\\
&\ \ \
-Y^c(X^fA^d_f+\dot{\tilde{X}}^d)(Z^eA^b_e+\dot{\tilde{Z}}^b)\dot{\partial}_d\dot{\partial}_bH^a_c\tilde{\dot{\partial}}_a.
\end{align}
Now we compute the mixed curvature of $(\rho, \eta)\bar{\nabla}$ as
follows
\begin{align*}
\overline{\overset{\circ}{\mathbb{P}}}(X, Y)Z&=(\rho, \eta,
h)\overline{\mathbb{R}}(\mathcal{V}_{\bar{S}}X,
\mathcal{H}_{\bar{S}}Y)Z=Y^c\dot{\tilde{X}}^d(\rho, \eta,
h)\overline{\mathbb{R}}(\dot{\tilde{\partial}}_d,
\bar{\tilde{\delta}}_c)(Z^b\bar{\tilde{\delta}}_b+\dot{\tilde{Z}}^b\tilde{\dot{\partial}}_b)\nonumber\\
&\ \ \
=-Y^c\dot{\tilde{X}}^dZ^b\dot{\partial}_d\dot{\partial}_b\bar{H}^a_c\bar{\tilde{\delta}}_a
-Y^c\dot{\tilde{X}}^d\dot{\tilde{Z}}^b\dot{\partial}_d\dot{\partial}_b\bar{H}^a_c\dot{\tilde{\partial}}_a.
\end{align*}
Setting $\bar{H}^a_c=H^a_c+A^a_c$ and
$\bar{\tilde{\delta}}_a=\tilde{{\delta}}_a+A^c_a\dot{{\partial}}_a$
in the above equation, yields
\begin{align}\label{lili1}
\overline{\overset{\circ}{\mathbb{P}}}(X,
Y)Z&=(-\dot{\tilde{X}}^dY^cZ^b(\dot{\partial}_d\dot{\partial}_bA^a_c+\dot{\partial}_d\dot{\partial}_bH^a_c))\tilde{{\delta}}_a
+(-\dot{\tilde{X}}^dY^cZ^bA^a_e(\dot{\partial}_d\dot{\partial}_bA^e_c\nonumber\\
&\ \ \
+\dot{\partial}_d\dot{\partial}_bH^e_c)-\dot{\tilde{X}}^dY^c\dot{\tilde{Z}}^b(\dot{\partial}_d\dot{\partial}_bA^a_c
+\dot{\partial}_d\dot{\partial}_bH^a_c))\dot{\tilde{\partial}}_a.
\end{align}
But from (\ref{esi2}) it is easy to see that
\[
\dot{\partial}_d\dot{\partial}_bH^a_c=-(\tilde{g}^f_c\circ
h\circ\pi)\dot{\partial}_d\dot{\partial}_b\dot{\partial}_f(G^a-\frac{1}{4}F^a).
\]
Putting the above equation in (\ref{lili}) and (\ref{lili1}) and
then comparing the new equations we obtain the assertion.
\end{proof}
\begin{theorem}
Let $S$ and $\bar{S}$ be two $(\rho, \eta)$-sprays of the mechanical $\left( \rho ,\eta \right)
$-systems $\left( \left( E,\pi ,M\right), F_{e}, \left( \rho ,\eta
\right) \Gamma \right) $ and $\left( \left( E,\pi ,M\right), \bar{F}_{e}, \left( \rho ,\eta
\right) \bar{\Gamma}\right) $, respectively. Then $S$ and
$\bar{S}$ are projectively related if and only if the geodesics of these systems with respect to the $(\rho, \eta)$-sprays
$S$ and $\bar{S}$ differ only in a strictly increasing parameter
transformation.
\end{theorem}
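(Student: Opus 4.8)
The plan is to translate both \emph{projectively related} and \emph{same geodesics up to reparametrization} into statements about the vertical difference $\bar S-S$ and then match them. First I would record the local data: writing $\mathcal{G}^{a}:=G^{a}-\tfrac14 F^{a}$, the canonical spray reads $S=(g^{a}_{b}\circ h\circ\pi)U^{b}\tilde\partial_{a}-2\mathcal{G}^{a}\dot{\tilde\partial}_{a}$, and by \eqref{IM} the spray condition is exactly the $2$-homogeneity relation $U^{b}\dot\partial_{b}\mathcal{G}^{a}=2\mathcal{G}^{a}$. Since $S$ and $\bar S$ are semisprays for the same almost tangent structure $\mathcal{J}_{(g,h)}$, they share the horizontal part $(g^{a}_{b}\circ h\circ\pi)U^{b}\tilde\partial_{a}$, so $\bar S-S=W^{a}\dot{\tilde\partial}_{a}$ is vertical with $\bar{\mathcal{G}}^{a}=\mathcal{G}^{a}-\tfrac12 W^{a}$; as $\bar S$ is also a spray, $W^{a}$ is $2$-homogeneous. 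The geodesic equations reduce, via the characterization at the end of Section 4, to the lift relation $\rho^{i}_{b}g^{b}_{a}y^{a}=\tfrac{d(\eta\circ h\circ c)^{i}}{dt}$ together with $\tfrac{dy^{a}}{dt}+2\mathcal{G}^{a}(\dot c)=0$ (and the barred analogue). The crucial kinematic fact I would establish first is that under a parameter change $t=t(s)$ the natural $(g,h)$-lift of $\tilde c=c\circ t$ has fibre components $\tilde y^{a}=t'(s)\,y^{a}$; this follows because \eqref{God} is linear in $y$ while the base velocity scales by $t'$.

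For the direction projectively related $\Rightarrow$ same geodesics, assume $\bar S=S+f\mathbb{C}$ with $f$ $1$-homogeneous. Given an $S$-geodesic $c$ with parameter $t$, I would compute, using $\tilde y^{a}=t'y^{a}$, the $2$-homogeneity of $\mathcal{G}^{a}$ and the $1$-homogeneity of $f$, that $\tfrac{d\tilde y^{a}}{ds}+2\bar{\mathcal{G}}^{a}(\dot{\tilde c})=y^{a}\big(t''-(t')^{2}f(\dot c)\big)$. Hence $\tilde c$ is a $\bar S$-geodesic precisely when the reparametrization solves $t''=(t')^{2}f(\dot c(t))$. Setting $\phi(t):=f(\dot c(t))$, this integrates to $t'=\exp\!\big(\int\phi\big)>0$, so a strictly increasing $t(s)$ always exists; thus every $S$-geodesic becomes a $\bar S$-geodesic after a strictly increasing parameter change, and symmetrically since $S=\bar S+(-f)\mathbb{C}$.

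For the converse, suppose the two families of geodesics agree up to a strictly increasing reparametrization. Running the same computation with the unknown $2$-homogeneous $W^{a}$ in place of $fU^{a}$ gives, along any $S$-geodesic, $y^{a}\,t''-(t')^{2}W^{a}(\dot c)=0$, that is $W^{a}(\dot c)=\tfrac{t''}{(t')^{2}}\,y^{a}$. Because $y^{a}=U^{a}(\dot c)$, this says $W^{a}$ is proportional to $U^{a}$ along the lift, whence $W^{a}U^{b}=W^{b}U^{a}$ there. As the integral curves of the spray pass through every point of $E$ off the zero section with prescribed fibre direction, the tensorial identity $W^{a}U^{b}=W^{b}U^{a}$ holds on all of $E\setminus 0$, so $W^{a}=f\,U^{a}$ with $f:=W^{a}/U^{a}$ well defined. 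Finally, applying $U^{b}\dot\partial_{b}$ to $W^{a}=fU^{a}$ and using the $2$-homogeneity of $W^{a}$ together with $U^{b}\dot\partial_{b}U^{a}=U^{a}$ forces $U^{b}\dot\partial_{b}f=f$, i.e. $f$ is $1$-homogeneous; therefore $\bar S=S+f\mathbb{C}$ and the sprays are projectively related.

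The main obstacle I expect is not the algebra but two structural points that must be handled with care: justifying the lift-scaling $\tilde y^{a}=t'y^{a}$ for the $(g,h)$-lift, which is where the locally invertible morphism $(g,h)$ and the linearity of \eqref{God} enter, and globalizing the pointwise proportionality in the converse into a single smooth $1$-homogeneous function $f$ on $E$. The latter hinges on the fact that the spray's integral curves foliate $E\setminus 0$, so that the velocity direction at each point is realized by a geodesic, allowing the local ratios $W^{a}/U^{a}$ to patch into one function.
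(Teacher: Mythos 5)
Your proof is correct and follows essentially the same route as the paper's: both directions reduce to the coordinate geodesic equations $\frac{dy^{a}}{dt}+2\bigl(G^{a}-\frac{1}{4}F^{a}\bigr)(\dot{c})=0$, exploit the scaling $\tilde{y}^{a}=t'\,y^{a}$ of lifted velocities under reparametrization together with the $2$-homogeneity of the spray coefficients coming from \eqref{IM}, and in the forward direction solve the same reparametrization ODE (your $t''=(t')^{2}f(\dot{c}(t))$ is the paper's $\frac{d^{2}s}{dt^{2}}=-f(\dot{c}(t))\frac{ds}{dt}$ rewritten for the inverse function).

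The one place where your argument genuinely diverges, to its advantage, is the end of the converse. The paper pulls the two parametrizations of a common geodesic back to a single parameter $u$, subtracts the two geodesic equations to get $2\bigl(G^{a}-\frac{1}{4}F^{a}\bigr)-2\bigl(\bar{G}^{a}-\frac{1}{4}\bar{F}^{a}\bigr)=\lambda\,y^{a}$ for a scalar $\lambda$ built from $u$-derivatives, and then simply asserts that, since this holds along every geodesic, the factor is a $1$-homogeneous function $f$ on $E$; neither the construction of $f$ nor its homogeneity is carried out. You supply exactly that missing step: the symmetrized identity $W^{a}U^{b}=W^{b}U^{a}$, propagated to all of $E$ off the zero section by the existence of geodesic lifts through every such point, yields a well-defined ratio $f=W^{a}/U^{a}$, and Euler's relation (applying $U^{b}\dot{\partial}_{b}$ to $W^{a}=fU^{a}$, using $2$-homogeneity of $W^{a}$ and $U^{b}\dot{\partial}_{b}U^{a}=U^{a}$) forces $U^{b}\dot{\partial}_{b}f=f$. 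The only residual issue, which you share with the paper, is extending $f$ across the zero section with the $C^{1}$ regularity demanded by the definition of projectively related sprays; both arguments leave this implicit.
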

\begin{proof}
First, we assume that $\bar{S}=S+f\mathbb{C}$ such that $f\left(
\lambda u_{x}\right) =\lambda f\left( u_{x}\right)$. Let $c$ be a
geodesic of the mechanical $\left( \rho ,\eta \right) $-system
$\left( \left( E,\pi ,M\right) ,F_{e},\left( \rho ,\eta \right)
\Gamma \right) $ with respect to the $(\rho, \eta)$-spray $S$. Then
\begin{equation}
\frac{dy^{a}(t) }{dt}+2( G^{a}-\frac{1}{4}F^{a})( \dot{c}(t)) =0.
\end{equation}
Now, we can extract the parameter $s$ by solving the ODE%
\begin{equation}
\frac{d^{2}s\left( t\right) }{dt^{2}}=-f\left( \dot{c}\left(
t\right) \right) \frac{ds\left( t\right) }{dt},~\frac{ds\left(
t\right) }{dt}>0.
\end{equation}
With this parametrization, we have
\begin{equation}
\dot{c}\left( t\right) =\dot{c}\left( s\left( t\right) \right) \frac{%
ds\left( t\right) }{dt},
\end{equation}
which gives
\begin{equation}
y^{a}\left( t\right) =y^{a}\left( s\left( t\right) \right)
\frac{ds\left( t\right) }{dt}.
\end{equation}
The above equation implies that
\begin{eqnarray}
\frac{dy^{a}(t) }{dt} &=&\frac{d}{dt}(y^{a}(s)
\frac{ds}{dt})  \\
&=&\frac{dy^{a}(s) }{dt}\frac{ds}{dt}+y^{a}(s)
\frac{d^{2}s}{dt^{2}}  \notag \\
&=&\frac{dy^{a}(s) }{ds}( \frac{ds}{dt}) ^{2}+y^{a}(s)
\frac{d^{2}s}{dt^{2}}.  \notag
\end{eqnarray}
Hence
\begin{eqnarray}\label{Khobe}
\frac{dy^{a}(s) }{ds} &=&\frac{-2(G^{a}-\frac{1}{4}%
F^{a})(\dot{c}(t))\frac{ds%
}{dt}+f(\dot{c}(t)) \frac{ds}{dt}y^{a}(t) }{(\frac{ds}{dt}) ^{3}}\nonumber\\
&=&\frac{-2(\bar{G}^{a}-\frac{1}{4}\bar{F}^{a})( \dot{c}(t) )}{(
\frac{ds}{dt}) ^{2}}
\notag \\
&=&-2(\bar{G}^{a}-\frac{1}{4}\bar{F}^{a})(\dot{c}(t))(\frac{dt}{ds})
^{2}
\notag \\
&=&-2(\bar{G}^{a}-\frac{1}{4}\bar{F}^{a})(\dot{c}(
t))(\frac{dt}{ds}) ^{2}.
\end{eqnarray}
Since $\bar{S}$ is an spray, then for any $a=1,\cdots, r$,
$\bar{G}^{a}-\frac{1}{4}\bar{F}^{a}$ is a homogenous function of
degree 2. Thus
\[
(\bar{G}^{a}-\frac{1}{2}\bar{F}^{a})( \dot{c}(t))(\frac{dt}{ds})
^{2}=(\bar{G}^{a}-\frac{1}{4}\bar{F}^{a})(\dot{c}(t)(\frac{dt}{ds}))=(\bar{G}^{a}-\frac{1}{4}\bar{F}^{a})(\dot{c}(s)).
\]
Setting this equation in (\ref{Khobe}), yields
\begin{equation}
\frac{dy^{a}(s)}{ds}=-2(\bar{G}^{a}-\frac{1}{4}\bar{F}^{a})(\dot{c}(s)).
\end{equation}
Conversely, let $S$ and $\overline{S}$ have the same geodesic $c$ around any point. To show that $S$ and $\overline{S}$ are projectively related, it is sufficient to prove 
\begin{align}\label{Khob1}
2(G^a-\frac{1}{4}F^a)(u_x)-2(\bar{G}^a-\frac{1}{4}\bar{F}^a)(u_x)=f(u_x)U^a(u_x),
\end{align}
where $f$ is a 1-homogenous function on $E$ and $u_x\in E_x$. Now, assume that $t$ and $s$ be the parameters of the geodesic $c$ with respect to $S$ and $\overline{S}$. Then
\begin{align}
\frac{dy^{a}(t)}{dt}+2(G^{a}-\frac{1}{4}F^{a})(\dot{c}(t))
&=0,\label{geod1}\\
\frac{dy^{a}(s)}{dt}+2(\bar{G}^{a}-\frac{1}{4}\bar{F}^{a})(\dot{c}(s))
&=0.\label{geod2}
\end{align}
Now we parametrize $c$ by a common parameter $u$. Then
\begin{align*}
y^a(t)=y^a(u)\frac{du}{dt},\ \ \ y^a(s)=y^a(u)\frac{du}{ds}.
\end{align*}
These equations give
\begin{align*}
\frac{d}{dt}y^a(t)&=\frac{d}{dt}(y^a(u)\frac{du}{dt})=y^a(u)\frac{d^2u}{dt^2}+(\frac{du}{dt})^2\frac{d}{du}y^a(u),\\
\frac{d}{ds}y^a(s)&=\frac{d}{ds}(y^a(u)\frac{du}{ds})=y^a(u)\frac{d^2u}{ds^2}+(\frac{du}{ds})^2\frac{d}{du}y^a(u).
\end{align*}
Setting these equation in (\ref{geod1}) and (\ref{geod2}) and using the 2-homogeniuty of $(G^a-\frac{1}{4}F^a)$, yield
\begin{align*}
y^a(u)\frac{d^2u}{dt^2}+(\frac{du}{dt})^2\frac{d}{du}y^a(u)+2(G^a-\frac{1}{4}F^a)(\dot{c}(u))(\frac{du}{dt})^2&=0,\\
y^a(u)\frac{d^2u}{ds^2}+(\frac{du}{ds})^2\frac{d}{du}y^a(u)+2(\bar{G}^a-\frac{1}{4}\bar{F}^a)(\dot{c}(u))(\frac{du}{ds})^2&=0.
\end{align*}
The above equations, yield
\[
2(G^a-\frac{1}{4}F^a)(\dot{c}(u))-2(\bar{G}^a-\frac{1}{4}\bar{F}^a)(\dot{c}(u))=(\frac{\frac{d^2u}{dt^2}}{(\frac{du}{dt})^2}-\frac{\frac{d^2u}{ds^2}}{(\frac{du}{ds})^2})y^a(u).
\]
Since the above equation holds for any geodesic, it results that (\ref{Khob1}) holds for some 1-homogenous  function $f$ on $E$.
\end{proof}
\begin{theorem}
Let $S$ and $\bar{S}$ be two $(\rho, \eta)$-sprays of the mechanical $\left( \rho ,\eta \right)
$-systems $\left( \left( E,\pi ,M\right), F_{e}, \left( \rho ,\eta
\right) \Gamma \right) $ and $\left( \left( E,\pi ,M\right), \bar{F}_{e}, \left( \rho ,\eta
\right) \bar{\Gamma}\right) $, respectively. Then $\bar{S}$
and $S$ are projectively related if and only if the Berwald
derivatives $\nabla$ and $\bar{\nabla}$ induced by $\mathcal{H}_{S}$
and $\mathcal{H}_{\bar{S}}$ respectively, are related by
\begin{align}\label{Ex}
(\rho, \eta)\bar{\nabla}_XY&=(\rho,
\eta)\nabla_XY-X^aY^b\dot{\partial}_bA^c_a\tilde{\delta}_c+\Big(-X^aY^bA^c_m\dot{\partial}_bH^m_a-X^aY^bA^c_m\dot{\partial}_bA^m_a\nonumber\\
&\ \ \
-X^a\dot{\tilde{Y}}^b\dot{\partial}_bA^c_a-X^aA^m_aY^d\dot{\partial}_mA^c_d-X^aY^d\rho^i_a\partial_iA^c_d-X^aY^dH^m_a\dot{\partial}_mA^c_d\nonumber\\
&\ \ \
+X^aY^dA^b_d\dot{\partial}_bH^c_a-\dot{\tilde{X}}^aY^d\dot{\partial}_aA^c_d\Big)\dot{\tilde{\partial}}_c,
\end{align}
where
$X=X^a\bar{\tilde{\delta}}_a+\dot{\tilde{X}}^a\dot{\partial}_a$ and
$Y=Y^b\bar{\tilde{\delta}}_b+\dot{\tilde{Y}}^b\dot{\partial}_b$.
\end{theorem}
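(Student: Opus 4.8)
The plan is to reduce the statement to the single geometric input supplied by the preceding theorems, namely the relation between the two horizontal projectors, and then to let the algebra of the Berwald connection do the rest. The crucial preliminary observation is the dictionary between a horizontal projector and its nonlinear connection: comparing $\mathcal{H}_S(\tilde{\partial}_a)=\tilde{\partial}_a+\mathcal{H}^b_a\dot{\tilde{\partial}}_b$ with $\tilde{\delta}_a=\tilde{\partial}_a-(\rho,\eta)\Gamma^b_a\dot{\tilde{\partial}}_b$ forces $(\rho,\eta)\Gamma^b_a=-H^b_a$, where $H^b_a:=\mathcal{H}^b_a$ is given by (\ref{esi2}). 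By the preceding theorems one has $\bar{H}^b_a=H^b_a+A^b_a$ with $A^b_a=\frac{1}{2}(\tilde{g}^c_a\circ h\circ\pi)U^b\dot{\partial}_cf+\frac{1}{2}f(\tilde{g}^b_a\circ h\circ\pi)$, whence $\overline{(\rho,\eta)\Gamma^b_a}=(\rho,\eta)\Gamma^b_a-A^b_a$ and, equivalently, $\bar{\tilde{\delta}}_a=\tilde{\delta}_a+A^c_a\dot{\tilde{\partial}}_c$. This last identity is the bridge that lets me pass freely between the two adapted bases, and it is the only place where the hypothesis $\bar{S}=S+f\mathbb{C}$ enters.

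First I would record the barred Berwald connection on its own adapted base: by the Example defining the Berwald $(\rho,\eta)$-connection, $(\rho,\eta)\bar{\nabla}_{\bar{\tilde{\delta}}_c}\bar{\tilde{\delta}}_b=\dot{\partial}_b(\overline{(\rho,\eta)\Gamma^a_c})\bar{\tilde{\delta}}_a$ and $(\rho,\eta)\bar{\nabla}_{\bar{\tilde{\delta}}_c}\dot{\tilde{\partial}}_b=\dot{\partial}_b(\overline{(\rho,\eta)\Gamma^a_c})\dot{\tilde{\partial}}_a$, while $(\rho,\eta)\bar{\nabla}_{\dot{\tilde{\partial}}_c}$ annihilates both base fields. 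Since the difference of two linear $(\rho,\eta)$-connections is $\mathcal{F}(E)$-bilinear, I would set $B(X,Y):=(\rho,\eta)\bar{\nabla}_XY-(\rho,\eta)\nabla_XY$ and evaluate the tensor $B$ on the adapted base fields. The essential computation is $(\rho,\eta)\bar{\nabla}_{\tilde{\delta}_a}\tilde{\delta}_b$: writing $\tilde{\delta}_a=\bar{\tilde{\delta}}_a-A^m_a\dot{\tilde{\partial}}_m$ and expanding by the Leibniz rule, the terms split into an algebraic part, coming from $\dot{\partial}_b(\overline{(\rho,\eta)\Gamma})-\dot{\partial}_b((\rho,\eta)\Gamma)=-\dot{\partial}_bA$ together with the base shift $A^c_a\dot{\tilde{\partial}}_c$, and a derivation part in which the anchor $\Gamma(\tilde{\rho},Id_E)(\bar{\tilde{\delta}}_a)=(\rho^i_a\circ h\circ\pi)\partial_i-\overline{(\rho,\eta)\Gamma^b_a}\dot{\partial}_b$ acts on the coefficients $A^c_d$. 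It is precisely this anchor action that produces the $\rho^i_a\partial_iA^c_d$, $H^m_a\dot{\partial}_mA^c_d$ and $A^m_a\dot{\partial}_mA^c_d$ contributions appearing in the claimed formula.

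After computing $B$ on all four base pairs, I would reassemble $B(X,Y)$ for $X=X^a\bar{\tilde{\delta}}_a+\dot{\tilde{X}}^a\dot{\tilde{\partial}}_a$ and $Y=Y^b\bar{\tilde{\delta}}_b+\dot{\tilde{Y}}^b\dot{\tilde{\partial}}_b$ by bilinearity, converting every remaining barred base field back through $\bar{\tilde{\delta}}_a=\tilde{\delta}_a+A^c_a\dot{\tilde{\partial}}_c$. Collecting by tensorial type then isolates the coefficient of $\tilde{\delta}_c$, which is $-X^aY^b\dot{\partial}_bA^c_a$, from the coefficient of $\dot{\tilde{\partial}}_c$, which gathers the eight remaining contributions involving the couplings $A\cdot\dot{\partial}H$, $A\cdot\dot{\partial}A$, $\rho\,\partial A$ and the vertical pieces in $\dot{\tilde{X}},\dot{\tilde{Y}}$. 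Adding $(\rho,\eta)\nabla_XY$ back then yields the stated identity (\ref{Ex}).

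The main obstacle is entirely organizational rather than conceptual: one must carry a large number of terms through two base changes and several Leibniz expansions without confusing the index placements or signs of the mixed $A$--$H$ couplings, for instance distinguishing $X^aY^dA^b_d\dot{\partial}_bH^c_a$ from $-X^aY^bA^c_m\dot{\partial}_bH^m_a$. Two internal consistency checks keep the bookkeeping honest: setting $f=0$ makes every $A^b_a$ vanish and must collapse the whole correction so that $(\rho,\eta)\bar{\nabla}_XY=(\rho,\eta)\nabla_XY$; and the vertical-direction terms should match those produced directly by $(\rho,\eta)\bar{\nabla}_{\dot{\tilde{\partial}}_a}=0$, which forces the $\dot{\tilde{X}}$ and $\dot{\tilde{Y}}$ couplings to appear only through the single derivatives $\dot{\partial}_aA^c_d$ and $\dot{\partial}_bA^c_a$.
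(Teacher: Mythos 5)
Your proposal, as written, proves only one of the two implications. The theorem is an equivalence, and your entire argument runs in the direction ``projectively related $\Rightarrow$ formula (\ref{Ex})'': you take as input $\bar{\mathcal{H}}^b_a=\mathcal{H}^b_a+A^b_a$ (which the preceding theorem supplies only \emph{under} the hypothesis $\bar{S}=S+f\mathbb{C}$), pass to $\bar{\tilde{\delta}}_a=\tilde{\delta}_a+A^c_a\dot{\tilde{\partial}}_c$, and expand the difference tensor $B(X,Y)=(\rho,\eta)\bar{\nabla}_XY-(\rho,\eta)\nabla_XY$ on the adapted bases. For that direction your plan is sound and is essentially the paper's own computation (the paper writes out $(\rho,\eta)\bar{\nabla}_XY$ and $(\rho,\eta)\nabla_XY$ in full and subtracts, while you organize the same Leibniz expansions through the tensoriality of $B$, which is a legitimate and slightly cleaner device since both derivatives have the same anchor). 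Your dictionary $(\rho,\eta)\Gamma^b_a=-\mathcal{H}^b_a$ is also correct and consistent with the paper's usage.

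What is missing is the converse: assuming the two Berwald derivatives are related by (\ref{Ex}), show that $\bar{S}=S+f\mathbb{C}$, i.e. that $-2(\bar{G}^b-\tfrac{1}{4}\bar{F}^b)=fU^b-2(G^b-\tfrac{1}{4}F^b)$. This does not follow from reversing your computation, because a priori you have no relation between $\bar{\mathcal{H}}$ and $\mathcal{H}$ in that direction; the hypothesis is only an identity between covariant derivatives. The paper closes this direction with a genuinely different idea: since $\bar{S}$ is a $(\rho,\eta)$-spray, Lemma 5.9 (the analogue of ``$\nabla_S U=0$'' for the Berwald derivative induced by $\mathcal{H}_{\bar{S}}$) gives $(\rho,\eta)\bar{\nabla}_{\bar{S}}\bar{U}=0$; substituting $X=\bar{S}$, $Y=\bar{U}$ into (\ref{Ex}), projecting with $\mathcal{H}_S$, and using the homogeneity identity $U^c\dot{\partial}_c\mathcal{H}^b_a=\mathcal{H}^b_a$ together with $(g^a_d\circ h\circ\pi)U^d\mathcal{H}^b_a=-2(G^b-\tfrac{1}{4}F^b)$ yields $\mathcal{H}_S((\rho,\eta)\nabla_{\bar{S}}\bar{U})=2\bigl((G^b-\tfrac{1}{4}F^b)-(\bar{G}^b-\tfrac{1}{4}\bar{F}^b)\bigr)\tilde{\delta}_b$, while the correction terms in (\ref{Ex}) contribute $(g^a_f\circ h\circ\pi)U^fU^b\dot{\partial}_bA^c_a=fU^c$. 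Equating the horizontal parts produces exactly the projective relation. Without an argument of this kind (or some substitute that extracts the spray difference from the connection difference), your proof establishes only half of the stated theorem; the sanity checks you propose (e.g. $f=0$) do not compensate for this.
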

\begin{proof}
Let $\bar{S}$ and $S$ be projectively related $(\rho, \eta)$-sprays and $\mathcal{H}_{\bar{S}}$ and $\mathcal{H}_S$ be the horizontal projectors induced by $\bar{S}$ and $S$, respectively. Then 
\begin{align}\label{del0}
\bar{{\mathcal{H}}}^b_a=\mathcal{H}^b_a+A^b_a.
\end{align}
Denoting by $\{\bar{\tilde{\delta}}_a, \dot{\tilde{\partial}}_a\}$ and $\{\tilde{\delta}_a, \dot{\tilde{\partial}}_a\}$ the adapted basis associated with $\mathcal{H}_{\bar{S}}$ and $\mathcal{H}_S$, respectively, then using (\ref{del0}) we have
\begin{align}\label{del}
\bar{\tilde{\delta}}_a=\tilde{\partial}_a+\bar{\mathcal{H}}_a^b\dot{\tilde{\partial}}_b
=\tilde{\partial}_a+(\mathcal{H}_a^b+A_a^b)\dot{\tilde{\partial}}_b=\tilde{\delta}_a+A^b_a\dot{\tilde{\partial}}_b.
\end{align}
Using the definition of the Berwald derivative, (\ref{del0}) and (\ref{del}), it attains
\begin{align}\label{del1}
(\rho, \eta)\bar{\nabla}_{\bar{\tilde{\delta}}_a}\bar{\tilde{\delta}}_b&=-(\dot{\partial}_b\bar{\mathcal{H}}^c_a)
\bar{\tilde{\delta}}_c
=-(\dot{\partial}_b\mathcal{H}^c_a+\dot{\partial}_bA^c_a)(\tilde{\delta}_c+A^d_c\dot{\tilde{\partial}}_d)\nonumber\\
&=-(\dot{\partial}_b\mathcal{H}^c_a+\dot{\partial}_bA^c_a)\bar{\tilde{\delta}}_c-A^c_d(\dot{\partial}_b\mathcal{H}^d_a+\dot{\partial}_bA^d_a)\dot{\tilde{\partial}}_c.
\end{align}
Similarly
\begin{align}\label{del2}
(\rho, \eta)\bar{\nabla}_{\bar{\tilde{\delta}}_a}\dot{\tilde{\partial}}_b=-(\dot{\partial}_b\mathcal{H}^c_a+\dot{\partial}_bA^c_a)\dot{\tilde{\partial}}_c.
\end{align}
Also for the Berwald derivative $(\rho, \eta)\bar{\nabla}$, we have
\begin{align}\label{del3}
(\rho, \eta)\bar{\nabla}_{\dot{\tilde{\partial}}_a}\dot{\tilde{\partial}}_b=(\rho, \eta)\bar{\nabla}_{\dot{\tilde{\partial}}_a}\bar{\tilde{\delta}}_b=0.
\end{align}
Now, let
$X=X^a\bar{\tilde{\delta}}_a+\dot{\tilde{X}}^a\dot{\partial}_a$ and
$Y=Y^b\bar{\tilde{\delta}}_b+\dot{\tilde{Y}}^b\dot{\partial}_b$.
Then (\ref{del1}), (\ref{del2}) and (\ref{del3}) give
\begin{align}\label{del4}
(\rho, \eta)\bar{\nabla}_{X}Y&=\Big(-X^aY^b(\dot{\partial}_b\mathcal{H}^c_a+\dot{\partial}_bA^c_a)+X^a\rho^i_a\partial_i(Y^c)+X^a(H^d_a+A^d_a)
\dot{\partial}_d(Y^c)\nonumber\\
&\ \ \ +\dot{\tilde{X}}^a\dot{\partial}_a(Y^c)\Big)\tilde{\delta}_c+\Big(-X^aY^bA^c_d(\dot{\partial}_b\mathcal{H}^d_a+\dot{\partial}_bA^d_a)+X^a\rho^i_a\partial_i(Y^b)A^c_b\nonumber\\
&\ \ \ +X^a(H^d_a+A^d_a)
\dot{\partial}_d(Y^b)A^c_b-X^a\dot{\tilde{Y}}^b(\dot{\partial}_b\mathcal{H}^c_a+\dot{\partial}_bA^c_a)+X^a\rho^i_a\partial_i(\tilde{Y}^c)\nonumber\\
&\ \ \ +X^a(H^d_a+A^d_a)
\dot{\partial}_d(\dot{\tilde{Y}}^c)+\dot{\tilde{X}}^a\dot{\partial}_a(\dot{\tilde{Y}}^c)
+\dot{\tilde{X}}^a\dot{\partial}_a(Y^b)A^c_b\Big)\dot{\tilde{\partial}}_c.
\end{align}
Now we consider the locally expressions of sections $X$ and $Y$ with respect to the adapted basis $\{\tilde{\delta}_a, \dot{\tilde{\partial}}_a\}$, i.e.,
\[
X=X^a\tilde{\delta}_a+(\dot{\tilde{X}}^a+X^cA^a_c)\dot{\tilde{\partial}}_a,\
\
Y=Y^b\tilde{\delta}_b+(\dot{\tilde{Y}}^b+Y^dA^b_d)\dot{\tilde{\partial}}_b,
\]
and compute $(\rho, \eta)\nabla_{X}Y$. Direct calculations yield
\begin{align}\label{del5}
(\rho, \eta)\nabla_{X}Y&=\Big(-X^aY^b\dot{\partial}_b\mathcal{H}^c_a+X^a\rho^i_a\partial_i(Y^c)+X^a\mathcal{H}^d_a\dot{\partial}_d(Y^c)
+\dot{\tilde{X}}^a\dot{\partial}_a(Y^c)\nonumber\\
&\ \ \
+X^bA^a_b\dot{\partial}_a(Y^c)\Big)\tilde{\delta}_c+\Big(-X^a\dot{\tilde{Y}}^b\dot{\partial}_b\mathcal{H}^c_a+X^a\rho^i_a\partial_i(\dot{\tilde{Y}}^c)
+X^a\mathcal{H}^d_a\dot{\partial}_d(\dot{\tilde{Y}}^c)\nonumber\\
&\ \ \
+\dot{\tilde{X}}^a\dot{\partial}_a(\dot{\tilde{Y}}^c)+X^bA^a_b\dot{\partial}_a(\dot{\tilde{Y}}^c)+X^bA^a_b\dot{\partial}_a(Y^d)A^c_d
+X^bA^a_bY^d\dot{\partial}_aA^c_d
\nonumber\\
&\ \ \ +X^a\rho^i_a\partial_i(Y^d)A^c_d+X^a\mathcal{H}^d_a\dot{\partial}_d(Y^b)A^c_b+X^a\rho^i_a\partial_i(A^c_b)Y^b
\nonumber\\
&\ \ \
+X^a\mathcal{H}^d_a\dot{\partial}_d(A^c_b)Y^b-X^aY^dA^b_d\dot{\partial}_b\mathcal{H}^c_a+\dot{\tilde{X}}^a\dot{\partial}_a(Y^b)A^c_b
+\dot{\tilde{X}}^aY^b\dot{\partial}_a(A^c_b)\Big)\dot{\tilde{\partial}}_c.
\end{align}
(\ref{del4}) and (\ref{del5}) result in (\ref{Ex}).

%----------------------------------------------------------------
Conversely, let (\ref{Ex}) holds. Suppose that $S$ and $\bar{S}$
have the locally expressions
\begin{align*}
S&=\left( g_{b}^{a}\circ h\circ \pi \right)
U^{b}\tilde{\partial}_{a}-2( G^{a}-\frac{1}{4}F^{a})
\dot{\tilde{\partial}}_{a},\\
\bar{S}&=\left( g_{b}^{a}\circ h\circ \pi \right)
U^{b}\tilde{\partial}_{a}-2( \bar{G}^{a}-\frac{1}{4}\bar{F}^{a})
\dot{\tilde{\partial}}_{a}.
\end{align*}
In the adapted basis $\{\bar{\tilde{\delta}}_a, \dot{\partial}_a\}$,
the $(\rho, \eta)$-spray $\bar{S}$ has the locally expression
\begin{align*}
\bar{S}=(g_{b}^{a}\circ h\circ \pi) U^{b}\bar{\tilde{\delta}}_a-(2(
\bar{G}^{a}-\frac{1}{4}\bar{F}^{a})+\bar{\mathcal{H}}^a_b(g_{c}^{b}\circ
h\circ \pi) U^{c}) \dot{\tilde{\partial}}_{a}.
\end{align*}
Now let $\bar{\mathcal{H}}^a_b=\mathcal{H}^a_b+B^a_b$. Then
$\bar{\tilde{\delta}}_a=\tilde{\delta}_a+B^b_a\dot{\tilde{\partial}}_b$,
where $\{\tilde{\delta}_a, \dot{\partial}_a\}$ is the adapted basis
induced by $\mathcal{H}_S$. Therefore in the adapted basis
$\{\tilde{\delta}_a, \dot{\partial}_a\}$, the $(\rho, \eta)$-spray
$\bar{S}$ has the locally expression
\begin{align*}
\bar{S}=(g_{b}^{a}\circ h\circ \pi) U^{b}\tilde{\delta}_a-(2(
\bar{G}^{a}-\frac{1}{4}\bar{F}^{a})+\mathcal{H}^a_b(g_{c}^{b}\circ
h\circ \pi) U^{c}) \dot{\tilde{\partial}}_{a}.
\end{align*}
Setting $X=\bar{S}$ and
$Y=\bar{U}=U^b\bar{\tilde{\delta}}_a+U^b\dot{\tilde{\partial}}_{b}=U^b\tilde{\delta}_a+(U^b+U^cB_c^b)\dot{\tilde{\partial}}_{b}$
in (\ref{Ex}) we get
\begin{align*}
0&=(\rho, \eta)\bar{\nabla}_{\bar{S}}\bar{U}=(\rho,
\eta)\nabla_{\bar{S}}\bar{U}-g^a_mU^mU^b(\dot{\partial}_bA^c_a)\tilde{\delta}_c+\Big(-g^a_dU^dU^bA^c_m\dot{\partial}_bH^m_a\\
&\ \ \ -g^a_dU^dU^bA^c_m\dot{\partial}_bA^m_a-g^a_dU^dU^b\dot{\partial}_bA^c_a-g^a_dU^dA^m_aU^n\dot{\partial}_mA^c_n\\
&\ \ \
-U^dg^a_mU^m\rho^i_a\partial_iA^c_d+g^a_mU^mU^dA^b_d\dot{\partial}_bH^c_a\\
&\ \ \
+U^d\dot{\partial}_aA^c_d\{2(\bar{G}^a-\frac{1}{4}\bar{F}^a)+B^a_bg^b_fU^f\}\Big)\dot{\tilde{\partial}}_c,
\end{align*}
Since $\mathcal{H}_S(\tilde{\delta}_a)=\tilde{\delta}_a$ and
$\mathcal{H}_S(\dot{\tilde{\partial}}_{a})=0$, then the above
equation gives 
\begin{align}\label{N1}
\mathcal{H}_S((\rho, \eta)\nabla_{\bar{S}}\bar{U})-(g^a_f\circ
h\circ\pi)U^fU^b(\dot{\partial}_bA^c_a)\tilde{\delta}_c=0.
\end{align}
But 
\begin{align*}
\mathcal{H}_S((\rho,
\eta)\nabla_{\bar{S}}\bar{U})&=\mathcal{H}_S\Big((g^a_d\circ
h\circ\pi)U^d\{U^b(\rho,
\eta)\nabla_{\tilde{\delta}_a}\tilde{\delta}_b+\tilde{\delta}_a(U^b)\tilde{\delta}_b\\
&\ \ \ +(U^b+U^eB^b_e)(\rho,
\eta)\nabla_{\tilde{\delta}_a}\dot{\tilde{\partial}}_b+\tilde{\delta}_a(U^b+U^eB^b_e)\dot{\tilde{\partial}}_b\}\\
&\ \ \ -(2(
\bar{G}^{a}-\frac{1}{4}\bar{F}^{a})+\mathcal{H}^a_d(g_{c}^{d}\circ
h\circ \pi)
U^{c})\{\dot{\partial}_a(U^b)\tilde{\delta}_b\\
&\ \ \
+\dot{\partial}_a(U^b+U^eB^b_e)\dot{\tilde{\partial}}_b\}\Big).
\end{align*}
Since
\begin{align*}
\mathcal{H}_S(\tilde{\delta}_a)&=\tilde{\delta}_a,\ \
\mathcal{H}_S((\rho,
\eta)\nabla_{\tilde{\delta}_a}\tilde{\delta}_b)=(\rho,
\eta)\nabla_{\tilde{\delta}_a}\tilde{\delta}_b=-\dot{\partial}_bH_a^c\tilde{\delta}_c,\\
\mathcal{H}_S(\dot{\tilde{\partial}}_{a})&=0,\ \ \
\mathcal{H}_S((\rho,
\eta)\nabla_{\tilde{\delta}_a}\dot{\tilde{\partial}}_b)=0, \ \
\tilde{\delta}_a(U^b)=\mathcal{H}^b_a,
\end{align*}
 then the above
equation reduces to
\begin{align*}
\mathcal{H}_S((\rho, \eta)\nabla_{\bar{S}}\bar{U})&=(g^a_d\circ
h\circ\pi)U^d\{-U^b\dot{\partial}_b\mathcal{H}_a^c\tilde{\delta}_c+\mathcal{H}^b_a\tilde{\delta}_b\}\\
&\ \ \ -(2(
\bar{G}^{b}-\frac{1}{4}\bar{F}^{b})+\mathcal{H}^b_d(g_{c}^{d}\circ
h\circ \pi) U^{c})\tilde{\delta}_b\\
&=-((g^a_d\circ h\circ\pi)U^dU^c\dot{\partial}_c\mathcal{H}_a^b+2(
\bar{G}^{b}-\frac{1}{4}\bar{F}^{b}))\tilde{\delta}_b.
\end{align*}
Setting $U^c\dot{\partial}_c\mathcal{H}_a^b=\mathcal{H}_a^b$ in the
above equation implies that
\begin{align*}
\mathcal{H}_S((\rho, \eta)\nabla_{\bar{S}}\bar{U})=-((g^a_d\circ
h\circ\pi)U^d\mathcal{H}_a^b+2(
\bar{G}^{b}-\frac{1}{4}\bar{F}^{b}))\tilde{\delta}_b.
\end{align*}
On the other hand in the proof of Lemma \ref{Important}, it had shown that
$(g^a_d\circ h\circ\pi)U^d\mathcal{H}_a^b=-2( G^b-\frac{1}{4}F^b)$.
Thus the above equation reduces to
\begin{align}\label{N2}
\mathcal{H}_S((\rho, \eta)\nabla_{\bar{S}}\bar{U})=2((
G^b-\frac{1}{4}F^b)-(
\bar{G}^{b}-\frac{1}{4}\bar{F}^{b}))\tilde{\delta}_b.
\end{align}
Also it is easy to see that
\begin{align}\label{N3}
(g^a_f\circ h\circ\pi)U^fU^b(\dot{\partial}_bA^c_a)=fU^c.
\end{align}
Putting (\ref{N2}) and (\ref{N3}) in (\ref{N1}), it results in
\[
-2(\bar{G}^b-\frac{1}{4}\bar{F}^b)=fU^b-2(G^{b}-\frac{1}{4}F^{b}).
\]
\end{proof}
%%%%%%%%%%%%%%%%%%%%%%%%%%%%%%%%%%%%%%%%%%%%%%%%%%%%%%%%%%%%%%%%%%%%%%%%%%%%%%%%%%%%%%%%%%%%%%%%5
%%%%%%%%%%%%%%%%%%%%%%%%%%%%%%%%%%%%%%%%%%%%%%%%%%%%%%%%%%%%%%%%%%%%%%%%%%%%%%%%%%%%%%%%%%%%%%%%%%
%%%%%%%%%%%%%%%%%%%%%%%%%%%%%%%%%%%%%%%%%%%%%%%%%%%%%%%%%%%%%%%%%%%%%%%%%%%%%%%%%%%%%%%%%%%%%%%%%%%%

%--------------------------------------------------------------------------------------------------------------
\noindent
Constantin M Arcu\c{s}\\
Secondary School "Cornelius Radu"\\
Radinesti Village, 217196\\
Gorj County, Romania\\
Email:\ c\_arcus@radinesti.ro\\

\noindent
Esmail Peyghan and Esa Sharahi\\
Department of Mathematics, Faculty  of Science\\
Arak University\\
Arak 38156-8-8349,  Iran\\
Email: e-peyghan@araku.ac.ir,\ e-sharahi@phd.araku.ac.ir

%&&&&&&&&&&&&&&&&&&&&&&&&&&&&&&&&&&&&&&&&&&&&&&&&&&&&&&&&&&&&&&&&&&&&&&&&&

\begin{thebibliography}{MaHo}
\bibitem{1} W. Ambrose, R. S. Palais and I. M. Singer, \textit{sprays}, An. Acad. Bras. Ci%
\^{e}nc. \textbf{32} (1960), 163-178.

\bibitem{2} C. M. Arcu\c{s}, \textit{The generalized Lie algebroids and
their applications}, arXiv:1007.1541v2, (2010), 206 pages.
%--------------------------------------------------------------------------------------------------------------

\bibitem{3} C. M. Arcu\c{s}, \textit{Generalized Lie algebroids and
connections over pair of diffeomorphic manifolds}, J. Gen. Lie Theory Appl.,
\textbf{7} (2013), 32 pages.
%--------------------------------------------------------------------------------------------------------------

\bibitem{4} C. M. Arcu\c{s}, \textit{Mechanical systems in the generalized
Lie algebroids framework}, Int. J. Geom. Methods Mod. Phys., \textbf{11}
(2014), 40 pages.

\bibitem{5} J. Cort\'{e}s, M. de Le\'{o}n, J. Marrero, M. de Diego, E.
Martinez, \textit{A survey of Lagrangian mechanics and control on Lie algebroids and
groupoids}, Int. J. Geom. Methods Mod. Phys. 03, 509 (2006).

\bibitem{6} M. Crampin, \textit{Connections of Berwald type
}, Publ. Math. Debrecen \textbf{57 }(2000), 455-473.

\bibitem{7} M. Crampin, W. Sarlet, E. Mart\'{\i}nez, G. Byrnes and G. E.
Prince \textit{Towards a geometrical understanding of Douglas's solution of the inverse problem of the calculus of variations}, Inverse Problems \textbf{10 }(1994), 245-260.

\bibitem{8} P. Dombrowski, \textit{On the geometry of the tangent bundle}, J. Reine Angew Math. \textbf{210 }(1962), 73-88.

\bibitem{9} J. Douglas, \textit{Minimal Surfaces of Higher Topological Structure},  Ann. of Math. (2) \textbf{29 }(1928), 143-168.

\bibitem{10} R. L. Fernandes, \textit{Lie Algebroids, Holonomy and Characteristic
Classes}, Advances in Mathematics 170, 119-179 (2002).

\bibitem{11} J. Grabowski and P. Urba\'{n}ski, \textit{Tangent and cotangent lift
and graded Lie algebra associated with Lie algebroids}, Ann. Global Anal.
Geom., 15(1997), 447-486.

\bibitem{12} D. Laugwitz, Differential and Riemannian Geometry, Academic
Press, New York, 1965.

\bibitem{13} M. de Le\'{o}n, J. C. Marrero and E. Mart\'{\i}nez,\textit{ Lagrangian
submanifolds and dynamics on Lie algebroids}, J. Phys. A: Math. Gen. 38
(2005), 241-308.

\bibitem{14} E. Mart\'{\i}nez, J. F. Cari\~{n}ena, Math. Proc. Camb. Phil.
Soc. \textbf{119 }(1996), 373-381.

\bibitem{15} T. Mestdag, \textit{A Lie algebroid approach to Lagrangian systems with
symmetry, Differential Geometry and Its Applications}, (2005), 523-535.

\bibitem{16} K. Nomizu, T. Sasaki, Affine Differential Geometry, Cmbridge
University Press, Cambridge 1994.

\bibitem{17} E. Peyghan,\textit{ Models of Finsler Geometry on Lie algebroids},
arXiv: 1310.7393v1, (2013), 90 pages.

\bibitem{18} W. A. Poor, Differential Geometric Structures, McGraw-Hill,
New-York 1981.

\bibitem{19} L. Popescu, \textit{The geometry of Lie algebroids and applications to
optimal control}, Annals. Univ. Al. I. Cuza, Iasi, series I, Math., LI
(2005), 155-170.

\bibitem{20} L. Popescu, \textit{Geometrical structures on Lie algebroids}, Publ.
Math. Debrecen 72, 1-2 (2008), 95-109.

\bibitem{21} L. Popescu, \textit{A note on Poisson-Lie algebroids}, J. Geom. Symmetry
Phys. 12 (2008) 63-73.

\bibitem{22} M. Spivak,\textit{ A Comprehensive Introduction to Differential
Geometry}, Vol II, 2nd ed, Publish or Perish, Inc. Houston, Texas 1979.

\bibitem{23} J. Szilasi, A Setting for Spray and Finsler Geometry, in:
Handbook of Finsler Geometry, Kluwer Academic Publishers, Dordrecht 2003,
1183-1426.

\bibitem{24} J. Szilasi and Gy\H{o}ry, \textit{A generalization of Weyl's theorem on
projectively related affine connections}, Rep. Math. Phys., 54(2004), 261-273.

\bibitem{25} S. Vacaru, \textit{Clifford-Finsler algebroids and nonholonomic
Einstein-Dirac structures}, J. Math. Phys., 47(2006), 1-20.

\bibitem{26} S. Vacaru, \textit{Finsler and Lagrange geometries in Einstein and
string gravity}, Int. J. Geom. Meth. Mod. Phys., 5(2008), 473-511.

\bibitem{27} S. Vacaru, \textit{Lagrange-Ricci Flows and Evolution of Geometric
Mechanics and Analogous Gravity on Lie Algebroids}, arXiv: 1108.4333 v2,
(2011).

\bibitem{28} S. Vacaru,\textit{ Nonholonomic algebroids, Finsler geometry, and
Lagrange-Hamilton spaces}, Mathematical Sciences, 6: 18(2012),
DOI:10.1186/2251-7456-6-18, arXiv: 0705.0032.

\bibitem{29} S. Vacaru, \textit{Almost K\"{a}hler Ricci Flows and Einstein and
LagrangeFinsler Structures on Lie Algebroids}, arXiv:0724182, (2013).

\bibitem{30} H. Weyl, \textit{ Reine Infinitesimalgeometrie}, Math. Z. \textbf{2 }(1918), 384-411.

\bibitem{31} H. Weyl, Raum, Zeit, Materie, Vorlesungen \"{u}ber Allgeime Relativit\"{a}tstheorie, Springer, Berlin 1918.

\bibitem{32} H. Weyl, \textit{Zur infinitesimalgeometric}, G\"{o}ttingen Nachrichten, (1921), 99-112.

\bibitem{33} K. Yano, The Theory of Lie Derivatives and its Applications,
North-Holland, Amsterdam 1956.
\end{thebibliography}
\end{document}